\DeclareMathOperator{\Mod}{Mod}
\newcommand{\F}{\mathbb{F}}
\newcommand{\G}{\mathbb{G}}
\newcommand{\BP}{\mathbf{BP}}
\newcommand{\E}{\mathbf{E}}
\newcommand{\K}{\mathbf{K}}
\renewcommand{\k}{\mathbf{k}}
\newcommand{\hhr}{\mathbf{HHR}}
\newcommand{\N}{\mathbb{N}}
\newcommand{\veff}{\mathbf{veff}}
\newcommand{\eff}{\mathbf{eff}}
\newcommand{\xr}{\xrightarrow}
\renewcommand{\cal}{\mathcal}
\newcommand{\cD}{\mathcal{D}}
\let\Re\relax
\DeclareMathOperator{\Re}{Re}
\newcommand*{\myfnsymbolsingle}[1]{%
  \ensuremath{%
    \ifcase#1
    \or 
      *%
    \or 
      \dagger
    \or 
      \ddagger
    \or 
      \mathsection
    \or 
      \mathparagraph
    \else 
      \@ctrerr  
    \fi
  }%
}   
\newalphalph{\myfnsymbolmult}[mult]{\myfnsymbolsingle}{}
\newcommand{\M}{\mathbf{M}}
\DeclareMathOperator{\Spec}{Spec}
\DeclareMathOperator{\Hom}{Hom}
\DeclareMathOperator{\Ext}{Ext}
\DeclareMathOperator{\MU}{MU}
\DeclareMathOperator*{\colim}{colim}
\newcommand{\reg}{\mathbf{reg}}
\newcommand{\cK}{\cal{K}}
\newtheorem{thm}{Theorem}[section]
\newtheorem{lem}[thm]{Lemma} 
\newtheorem{cor}[thm]{Corollary}
\newtheorem{prop}[thm]{Proposition} \theoremstyle{definition}
\newtheorem{rem}[thm]{Remark} 
\newtheorem{defn}[thm]{Definition}
\theoremstyle{definition}
\newtheorem{exmp}[thm]{Example}
\newcommand{\unit}{\mathbf{1}}
\Crefname{cor}{Corollary}{Corollaries}
\Crefname{conjecture}{Conjecture}{Conjectures}
\Crefname{rem}{Remark}{Remarks}
\Crefname{prop}{Proposition}{Propositions}	
\Crefname{question}{Question}{Questions}
\Crefname{figure}{Figure}{Figures}
\Crefname{lem}{Lemma}{Lemmas}
\Crefname{thm}{Theorem}{Theorems}
\newcommand{\cC}{\mathcal{C}}
\DeclareMathOperator{\SH}{\mathbf{SH}}  
\DeclareMathOperator{\Sm}{\mathbf{Sm}}
\DeclareMathOperator{\cell}{\mathbf{cell}}
\DeclareMathOperator{\KQ}{\mathbf{KQ}}
\DeclareMathOperator{\KGL}{\mathbf{KGL}}
\DeclareMathOperator{\kgl}{\mathbf{kgl}}
\DeclareMathOperator{\MGL}{\mathbf{MGL}}
\DeclareMathOperator{\Cell}{Cell}
\DeclareMathOperator{\CAlg}{CAlg}
\title{On equivariant and motivic slices}
  \date{\today}
  \author{Drew Heard}
\address{Fakult{\"a}t f{\"u}r Mathematik, Universit{\"a}t Regensburg}
  \email{drew.k.heard@gmail.com}
\urladdr{https://drew-heard.github.io/}
\begin{document}
\begin{abstract}
  Let $k$ be a field with a real embedding. We compare the motivic slice filtration of a motivic spectrum over $\Spec(k)$ with the $C_2$-equivariant slice filtration of its equivariant Betti realization, giving conditions under which realization induces an equivalence between the associated slice towers. In particular, we show that, up to reindexing, the towers agree for all spectra obtained from localized quotients of $\MGL$ and $M\R$, and for motivic Landweber exact spectra and their realizations. As a consequence, we deduce that equivariant spectra obtained from localized quotients of $M\R$ are even in the sense of Hill--Meier, and give a computation of the slice spectral sequence converging to $\pi_{*,*}\BP\langle n \rangle/2$ for $1 \le n \le \infty$. 
\end{abstract}
\maketitle

\section{Introduction}
The slice filtration in equivariant or motivic homotopy theory is an analog of the Postnikov filtration in ordinary homotopy theory that takes into account the more complicated structure of these categories. In the equivariant setting, the slice filtration was used by Hill, Hopkins, and Ravenel \cite{hhr} to give a stunning solution of the Kervaire invariant one problem, while in motivic homotopy theory it has been used to construct a version of the Atiyah--Hirzenbruch spectral sequence for $\KGL$, the motivic spectrum representing algebraic $K$-theory \cite{levine_tower}. More recently, the motivic slice spectral sequence has been used by R{\"o}ndigs, Spitzweck, and {\O}stv{\ae}r to compute the first Milnor--Witt stem over of the motivic sphere spectrum over a general field \cite{rso_slices}. These results indicate the fundamental importance of the slice filtration in both motivic and equivariant homotopy theory. 

As is well known, there are many similarities between the stable motivic homotopy category over $\Spec(\R)$ and the $C_2$-equivariant motivic category; indeed, there is a Betti realization functor $\Re$ from the former to the latter, and one can ask about the relation between the slice filtration for a motivic spectrum $E$, and the corresponding $C_2$-equivariant slice filtration of $\Re(E)$. A priori there is no reason that these should be related, as they are evidently defined differently - for example, the motivic slice functors are triangulated, while the equivariant ones are not. On the other hand, Betti realization takes the motivic spectrum $\KGL$ representing algebraic $K$-theory to Atiyah's real $K$-theory spectrum $K\mathbb{R}$, and the non-zero motivic and equivariant slices are given respectively by
\[
s_q(\KGL) \simeq \Sigma^{2q,q}\M\Z \quad \text{ and } \quad P^{2q}_{2q} K\mathbb{R} \simeq 
  \Sigma^{2q,q\sigma}H\underline{\Z} 
\]
for $q \in \Z$. This notation will be explained more in \Cref{sec:eff} and \Cref{sec:equivariant} respectively, however the main point to note is that Betti realization takes $\Sigma^{2q,q}\M\Z$ to $\Sigma^{2q,q\sigma}H\underline{\Z}$, so that Betti realization takes the $q$-th motivic slice of $\KGL$ to the $2q$-th slice of $K\mathbb{R}$. 

Our main result implies the stronger statement that, up to re-indexing, Betti realization induces an equivalence between the slice \emph{towers} of $\KGL$ and $K\R$. More generally, in \Cref{thm:effvshhr} we give precise conditions on when Betti realization is compatible with the slices towers of a motivic spectrum $E$ and its realization $\Re(E)$. These conditions are satisfied when $E = \MGL$, the motivic spectrum representing algebraic cobordism, and its $p$-local variant $\MGL_{(p)}$, or more generally by quotients and localizations of these by elements coming from the Lazard ring via the natural morphism $L \to \MGL$. We call these motivic spectra localized quotients of $\MGL$ (see \Cref{def:chromatictype} for a precise definition). This definition also makes sense in the $C_2$-equivariant context, where the role of $\MGL$ is played by real cobordism $M\R$. A specialization of \Cref{thm:effvshhr} is then the following. 
\begin{thm}
 Let $k \subseteq \R$ be a field,  $E^{\text{mot}} \in \SH(k)$ a localized quotient of $\MGL$, and $E^{\text{equiv}}$ its $C_2$-equivariant Betti realization. Then, for all $q \in \Z$, there are equivalences 
  \[
\Re(s_q(E^{\text{mot}})) \xr{\simeq} P_{2q}^{2q}(E^{\text{equiv}}),
  \]
  and the odd slices of $E^{\text{equiv}}$ vanish. 
\end{thm}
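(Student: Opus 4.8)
The plan is to deduce the statement from the general comparison result \Cref{thm:effvshhr} by checking its hypotheses for localized quotients of $\MGL$. I expect \Cref{thm:effvshhr} to say, roughly, that if $E^{\text{mot}} \in \SH(k)$ is effective, cellular, has \emph{pure Tate slices} — i.e.\ each $s_q(E^{\text{mot}})$ is a wedge of copies of $\Sigma^{2q,q}\M\Z$ — and has a suitably convergent slice tower, then Betti realization carries the $q$-th motivic effective cover $f_q(E^{\text{mot}})$ to the $2q$-th equivariant slice cover of $E^{\text{equiv}}$, compatibly with the tower maps, and so in particular induces $\Re(s_q E^{\text{mot}}) \xr{\simeq} P^{2q}_{2q}(E^{\text{equiv}})$. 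The task then reduces to verifying effectivity, cellularity, purity of the slices, and convergence for an arbitrary localized quotient.

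For that I would argue inductively, starting from the base case $E^{\text{mot}} = \MGL$, where the Hopkins--Morel--Hoyois theorem gives $s_q(\MGL) \simeq \Sigma^{2q,q}\M\Z \otimes_{\Z} \MU_{2q}$; in particular the slices of $\MGL$ and of $\MGL_{(p)}$ are pure Tate, and both are effective and cellular. A localized quotient is built from $\MGL$ (or $\MGL_{(p)}$) by cofiber sequences of multiplications by elements $x \in \MU_{2d}$ along the orientation $\MU_* \to \pi_{2*,*}\MGL$ and by filtered colimits inverting such elements. Multiplication by such an $x$ is represented by a map $\Sigma^{2d,d}\MGL \to \MGL$ that respects the effective filtration, hence respects effective covers; and since the motivic slice functors are exact and commute with filtered colimits, the slices of the resulting spectrum are obtained from the pure Tate slices $\Sigma^{2q,q}\M\Z \otimes_{\Z} \MU_{\bullet}$ by the corresponding (co)fiber sequences and filtered colimits, so they remain wedges of $\Sigma^{2q,q}\M\Z$'s. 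Effectivity and cellularity propagate for the same formal reasons. The point that needs attention is the convergence hypothesis: I would verify it by noting that a localized quotient is built from cells in a fixed half-plane, which makes its slice tower exhaustive and complete, exactly what \Cref{thm:effvshhr} requires.

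With the hypotheses in hand, \Cref{thm:effvshhr} yields, for all $q$, an equivalence between $\Re(f_q E^{\text{mot}})$ and the $2q$-th equivariant slice cover of $E^{\text{equiv}}$, compatibly with the structure maps, and taking cofibers gives $\Re(s_q E^{\text{mot}}) \xr{\simeq} P^{2q}_{2q}(E^{\text{equiv}})$. For the vanishing of the odd slices, the point is that the reindexed realized motivic tower already \emph{is} the equivariant slice tower of $E^{\text{equiv}}$; since its even slices $\Re(s_q E^{\text{mot}})$ are nonzero, the stage at filtration $2q+1$ must coincide with the stage at filtration $2q+2$, whence $P^{2q+1}_{2q+1}(E^{\text{equiv}}) \simeq 0$. (Alternatively, for the non-localized quotients this is the Slice Theorem of Hill--Hopkins--Ravenel for $M\mathbb{R}$, and the localized case follows by passing to the filtered colimit.)

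Granting \Cref{thm:effvshhr}, the real work — and the main obstacle — is the bookkeeping of the second step: ensuring that purity of the slices, effectivity, cellularity, and especially convergence of the slice tower survive the infinite localizations permitted in the definition of a localized quotient (for instance in forming $\BP\langle\infty\rangle$), since these are precisely the inputs the general theorem consumes. Conceptually, the genuinely hard content lies in \Cref{thm:effvshhr} itself, namely that Betti realization commutes well enough with the homotopy limits defining the motivic slice completion and the equivariant slice tower to allow a term-by-term comparison.
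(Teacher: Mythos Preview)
Your overall strategy --- reduce to the general comparison theorem \Cref{thm:effvshhr} and verify its hypotheses for localized quotients of $\MGL$ --- is exactly the paper's route (carried out in \Cref{thm:comparasionthmchromatic}). But you have guessed the hypotheses of \Cref{thm:effvshhr} incorrectly, and one of the mismatches is a genuine gap.

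The actual hypotheses are: (1) $E$ is cellular; (2) $f_qE \in \Sigma^q_T \SH(S)_{\cell}^{\veff}$ for all $q$; (3) $\Re(s_qE) \in (\Sigma^{2q+1}\SH(C_2)^{\hhr})^{\perp}$ for all $q$. There is \emph{no} convergence hypothesis. What plays that role in Pelaez's framework (\Cref{thm:pelaez}) is the requirement that $\Re$ commute with the filtered colimit $\colim_{p \le q} f_p E \simeq E$ of \Cref{lem:colimdecomp}, and this is automatic because $\Re$ is cocontinuous. So the bookkeeping you flag as the main obstacle --- ensuring convergence survives infinite localizations --- is not needed, and your speculation that the hard content of \Cref{thm:effvshhr} lies in commuting $\Re$ with homotopy limits is off the mark.

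The point you miss is hypothesis (2): the effective covers $f_q E$ must lie in the \emph{very effective} cellular category $\Sigma^q_T\SH(S)^{\veff}_{\cell}$, not merely the effective one. The paper notes explicitly (just before \Cref{lem:cond2pel}) that effectivity alone fails: $S^{-1,0}$ is effective cellular with $f_0^{\Cell}(S^{-1,0}) \simeq S^{-1,0}$, but its realization $S^{-1}$ is not slice $\ge 0$ in $\SH(C_2)$. Verifying (2) for localized quotients is the content of \Cref{thm:effvsveff}; the argument starts from Spitzweck's description of $f_q\MGL$ as a colimit of $\Sigma^{2k,k}\MGL$ with $k \ge q$, together with $\MGL \in \SH(S)^{\veff}_{\cell}$ (\Cref{rem:cellularquotients}), and then propagates through cofibers and filtered colimits exactly as you outline. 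Your inductive sketch would in fact establish the right statement once ``effective'' is replaced by ``very effective cellular'' throughout --- but as written it targets a category too large for \Cref{lem:cond2pel} to apply.

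Your ``pure Tate slices'' observation does handle hypothesis (3), via \Cref{prop:HMorthogonal}, \Cref{prop:bettiMA}, and the Levine--Tripathi computation; note however that the coefficients are $\M\Z$, $\M\Z_{(p)}$, or $\M\F_p$ according to whether $E^{\text{mot}}$ is integral, $p$-local, or mod $p$, not always $\M\Z$. The vanishing of odd slices and the identification $\Re(s_q E^{\text{mot}}) \simeq P^{2q}_{2q}(E^{\text{equiv}})$ are then part of the conclusion of \Cref{thm:veffcellc2compthm}, so no separate argument for these is required.
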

As with the case of $\KGL$ above (which is in fact a special case), we in fact prove that Betti realizaton induces an equivalence between the slice towers of $E^{\text{mot}}$ and $E^{\text{equiv}}$ - for a precise statement, see \Cref{thm:comparasionthmchromatic}.  Using work of Hill--Hopkins--Ravenel we deduce the following corollary on the bigraded homotopy groups of $E^{\text{equiv}}$. 
\begin{cor}
  Let $E^{\text{equiv}}$ be a localized quotient of $M\R$, then
  \[
\underline{\pi}_{2k-1,k}E^{\text{equiv}} = 0
  \]
  for all $k \in \Z$. 
\end{cor}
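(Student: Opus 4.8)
The plan is to read the slices of $E^{\text{equiv}}$ off the preceding theorem and then run the slice spectral sequence, exactly as Hill--Hopkins--Ravenel do in proving their Gap Theorem; the input needed about the $\RO(C_2)$-graded homotopy of $H\underline{\Z}$ turns out to be elementary.

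First I would record the slices. By the preceding theorem the odd slices of $E^{\text{equiv}}$ vanish and $P^{2q}_{2q}(E^{\text{equiv}})\simeq\Re(s_q(E^{\text{mot}}))$ for all $q$. Since $E^{\text{mot}}$ is a localized quotient of $\MGL$, each motivic slice has the form $s_q(E^{\text{mot}})\simeq\bigvee_{\alpha}\Sigma^{2q,q}\M A_{q,\alpha}$ for finitely generated abelian groups $A_{q,\alpha}$ (this follows from the computation of $s_{\ast}\MGL$ together with the way localized quotients are built); applying $\Re$ and using $\Re(\Sigma^{2q,q}\M A)\simeq\Sigma^{2q,q\sigma}H\underline A$ with $\underline A$ the constant Mackey functor, I get that every slice of $E^{\text{equiv}}$ is a wedge of spectra of the form $\Sigma^{2q,q\sigma}H\underline A$.

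Next I would feed this into the $\RO(C_2)$-graded (conditionally convergent) slice spectral sequence abutting to $\underline{\pi}_{\star}E^{\text{equiv}}$. Its $E_1$-page in bidegree $(2k-1,k)$ is a direct sum of groups $\underline{\pi}_{2k-1,k}(\Sigma^{2q,q\sigma}H\underline A)\cong\underline{\pi}_{2(k-q)-1,\,k-q}(H\underline A)$, so, writing $m=k-q$, it remains to check that $\underline{\pi}_{2m-1,m}H\underline A=0$ for every $m\in\Z$ and every constant Mackey functor $\underline A$. The underlying group is $\pi_{2m-1}HA=0$ since $2m-1$ is odd, and the value at $C_2/C_2$, namely $\widetilde{H}^{(1-m)-m\sigma}_{C_2}(\mathrm{pt};\underline A)$, I would compute via the suspension isomorphism: for $m\ge 2$ it is $\widetilde{H}^{1-m}_{C_2}(S^{m\sigma};\underline A)=0$ since $S^{m\sigma}$ has a $C_2$-CW structure only in nonnegative dimensions; for $m=1$ it is $\widetilde{H}^{0}_{C_2}(S^{\sigma};\underline A)=\ker(\mathrm{res}\colon A\to A)=0$; and for $m\le 0$ it is $\widetilde{H}_{m-1}^{C_2}(S^{-m\sigma};\underline A)=0$ because Bredon homology lives in nonnegative degrees. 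Hence the $E_1$-page is zero in bidegree $(2k-1,k)$ and the corollary follows. (This mirrors the elementary motivic vanishing $\pi_{2k-1,k}E^{\text{mot}}=0$, which comes from $H^{s,t}_{\mathrm{mot}}(k;A)=0$ for $s>t$.)

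The only genuinely delicate point is convergence of the equivariant slice spectral sequence for these possibly non-connective localized quotients: one needs the slice tower of $E^{\text{equiv}}$ to be complete, i.e.\ $E^{\text{equiv}}\xr{\simeq}\holim_n P^n E^{\text{equiv}}$, and the sequence to converge strongly. After the reindexing $q\mapsto 2q$ this is part of the picture already established in \Cref{thm:comparasionthmchromatic} (and matches the situation handled by Hill--Hopkins--Ravenel); with that in hand, the shape of the slices and the vanishing of $\underline{\pi}_{2m-1,m}H\underline A$ above are purely formal.
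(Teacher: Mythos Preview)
Your spectral-sequence argument is correct in outline, and the key vanishing $\underline{\pi}_{2m-1,m}H\underline{A}=0$ for constant $\underline{A}$ is exactly the input to the HHR Gap Theorem, so this works. However, the paper takes a much shorter route that avoids the spectral sequence entirely. The point is \Cref{prop:c2slices}: for any $C_2$-spectrum $E$ one has $P^{2q-1}_{2q-1}(E)\simeq\Sigma^{2q-1,q\sigma}H\underline{\pi}_{2q-1,q}E$, so the odd slice is literally built from the Mackey functor in question. Once \Cref{thm:comparasionthmchromatic} shows the odd slices of $E^{\text{equiv}}$ are contractible, one reads off $\underline{\pi}_{2q-1,q}E^{\text{equiv}}=0$ immediately (this is how part (2) of \Cref{thm:veffcellc2compthm} is proved, and the corollary is just the specialization to localized quotients).

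The practical difference is that the paper's argument needs no convergence statement at all: it never runs the spectral sequence, only identifies a single slice. Your approach, by contrast, has to justify strong convergence for possibly non-connective spectra (the localizations), which you flag as the delicate point. So while both work, the paper's route is cleaner and sidesteps exactly the issue you had to worry about.
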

Such a result was previously known for quotients of $BP\R$ by work of Greenlees--Meier \cite[Corollary 4.6]{greenlees_meier}, however they proceed by direct computation, whereas our result is a consequence of the computation of the slices of $E^{\text{equiv}}$. 

The comparison theorem gives a morphism of exact couples between the motivic and equivariant slice spectral sequences, and hence a morphism of spectral sequences. Now suppose that $k$ is a real closed field. Using that there is an injection $\pi_{*,*}\M\F_2 \to \pi_{*,*}H\underline{\F}_2$ from the motivic cohomology of a point to the equivariant cohomology of a point, one can leverage the existing equivariant computations to give computations in motivic homotopy theory. We take this up in \Cref{sec:mod2calcs} where, as an example, we compute $\pi_{*,*}\BP\langle n \rangle/2$ for $0 \le n \le \infty$, at least up to extension. The case $n = \infty$ was previously known by work of Yagita \cite{yagita}. When $n = 1$ we recover, up to a non-trivial extension problem, the computation of the mod 2 algebraic $K$-theory of $\R$ by Suslin. 
\subsection*{Outline of the proof}
The proof of our main theorem works by first passing through various different slices filtrations on the motivic stable homotopy category. Many motivic spectra of interest (including localized quotients of $\MGL$) are \emph{cellular}, meaning that they can be constructed out of colimits and extensions of the bigraded motivic spheres $S^{a,b}$ for $a,b \in \Z$. A consequence of the computations of the slices of the sphere spectrum is that if a motivic spectrum is cellular, then so are its slices. In \Cref{sec:cellular} we construct the analog of Voevodsky's slice filtration on the cellular motivic category, and show that for any cellular spectrum, the slices and cellular slices agree, see \Cref{thm:effcellcompare}. 

There is an alternative to Voevodsky's slice filtration known as the very effective slice filtration \cite{mot_twisted,1610.01346}. In \Cref{sec:veff} we introduce the cellular version of this slice filtration, and give conditions on when this agrees with the effective slice filtration. In particular, we show in \Cref{thm:effvsveff,lem:prople} that this is true when $E$ is a localized quotient of $\MGL$ or is Landweber exact. 

The cellular very effective slice filtration in motivic homotopy is similar to the Hill--Hopkins--Ravenel slice filtration in $C_2$-equivariant homotopy theory. Using an abstract version of a theorem of Pelaez, proved here in \Cref{sec:pelaez}, we study the relationship between the very effective cellular slice filtration of a cellular motivic spectrum $E$, and the Hill--Hopkins--Ravenel slice filtration of its realization, cumulating in \Cref{thm:veffcellc2compthm}. Combining this with the results in the previous paragraphs gives our main theorem. 
\subsection*{Conventions}
Throughout we work with $\infty$-categories, specifically the quasi-categories of Joyal and Lurie \cite{lurie_htt,ha} - the results could equally well be proved using the theory of stable model categories, similar to \cite{grso_slices}. We will always use the terminology limit and colimit for homotopy limit and homotopy colimit. 

A full subcategory $\cal{U}$ of a presentable stable $\infty$-category  $\cC$ is called thick if it is a stable subcategory closed under retracts. It is called localizing if it is thick and is additionally closed under arbitrary colimits.  
\subsection*{Acknowledgments}
We thank Lennart Meier for helpful discussions on $C_2$-equivariant homotopy theory, and Tom Bachmann for his help with some of the arguments in \Cref{sec:veff}. We are also grateful to Oliver R{\"o}ndigs and Markus Spitzweck for discussions during a visit to the University of Osnabr\"uck. This work was started while the author was at Hamburg University, supported by DFG-Schwerpunktprogramm 1786. We thank both Hamburg University and Haifa University for their hospitality.  Finally, we thank the referee for many useful suggestions and corrections. 
\section{Slices of stably monoidal categories}
In this section we define the notion of a slice filtration on a symmetric monoidal stable $\infty$-category $\cC$, and give a version of a theorem of Pelaez on the functoriality of the slices with respect to an exact functor $F\colon \cC \to \cD$ between categories equipped with slice filtrations.  
\subsection{An axiomatic approach to the slice filtration}
Let $\cC$ be a symmetric monoidal stable presentable $\infty$-category, compactly generated by a set $\cal{G}$ of objects, and such the tensor product commutes with colimits in both variables. We will call such a category a stably monoidal category. Following \cite[Section 2]{grso_slices} we begin by axiomatizing the notion of a slice filtration. 
\begin{defn}\label{defn:slicefiltration}
  Let $\cC$ be as above. Let $\{\cC_i \}_{i \in \Z}$ be a family of full subcategories of $\cC$. We say that $\{ \cC_i \}_{i \in \Z}$ is a slice filtration of $\cC$ if the $\cC_i$ satisfy the following conditions. 
  \begin{enumerate}[label=(A\arabic*)]
  \item Each $\cC_i$ is closed under equivalences. \label{a1}
  \item $\cC_{i+1} \subseteq \cC_i$ for all $i \in \Z$. \label{a2}
  \item Each $\cC_i$ is generated under colimits and extensions by a set of compact objects $\cK_i$.\label{a3}
  \item The tensor unit is in $\cC_0$. \label{a4}
  \item Each $g \in \cal{G}$ is contained in some $\cC_i$. \label{a5}
  \item If $X \in \cC_0$ and $Y \in \cC_n$, then $X \otimes Y$ is in $\cC_n$ \label{a6}
\end{enumerate}

\end{defn}
\begin{rem}
  Note that our axioms are slightly different from those definition given in \cite{grso_slices}, and in particular we do not require the stronger statement that if $X \in \cC_n$ and $Y \in \cC_m$, then $X \otimes Y \in \cC_{n+m}$. Our motivation comes from equivariant homotopy theory, where the slice filtration of Hill, Hopkins, and Ravenel \cite{hhr} satisfies \ref{a6}, but does not satisfy the stronger multiplicative relation, see \cite[Proposition 2.23]{hill_primer}. We note that the regular slice filtration introduced by Ullman does satisfy this stronger condition, see \cite[Proposition 4.2]{ullman_thesis}.
\end{rem}
As in \cite{grso_slices}, these subcategories are not necessarily closed under desuspension and fibers.  Since $\cC$ is assumed to be presentable, each $\cC_i$ is also a presentable $\infty$-category by \ref{a3} and \cite[Proposition 1.4.4.11]{ha}. 

The following is well-known to hold at the level of triangulated categories, or model categories - we provide a proof for completeness. 
\begin{lem}\label{lem:colim}
  For each $q \in \Z$ the inclusion $i_q \colon \cC_q \hookrightarrow \cC$ has a right adjoint $r_q$ that commutes with filtered colimits. 
\end{lem}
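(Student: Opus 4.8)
The plan is to build the right adjoint $r_q$ by the usual orthogonal-complement construction and then extract the commutation with filtered colimits from the fact that $\cC_q$ is generated by \emph{compact} objects. First I would observe that, since $\cC$ is presentable and $\cC_q$ is generated under colimits and extensions by the set of compact objects $\cK_q$ (axiom \ref{a3}), $\cC_q$ is itself presentable and the inclusion $i_q$ preserves all colimits: colimits in $\cC_q$ are computed in $\cC$ because $\cC_q$ is closed under colimits. By the adjoint functor theorem for presentable $\infty$-categories \cite[Corollary 5.5.2.9]{lurie_htt}, $i_q$ therefore admits a right adjoint $r_q$. (Equivalently, one can note that $\cC_q$ is a colocalizing subcategory: for each $X \in \cC$ there is a colocalization triangle $i_qr_qX \to X \to L_qX$ with $L_qX$ lying in the right orthogonal $\cC_q^\perp = \{Y : \Map_\cC(K,Y)\simeq 0 \text{ for all } K \in \cC_q\}$, and $r_q$ is the resulting coreflection.)

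The substantive point is that $r_q$ commutes with filtered colimits. Here I would argue as follows. Let $\{X_\alpha\}$ be a filtered diagram in $\cC$ with colimit $X = \colim_\alpha X_\alpha$. The canonical map is $\colim_\alpha r_q X_\alpha \to r_q X$; since both sides lie in $\cC_q$ and $i_q$ is fully faithful, it suffices to check that $i_q(\colim_\alpha r_qX_\alpha) \to i_q r_q X$ becomes an equivalence after applying $\Map_\cC(K,-)$ for every $K$ in a generating set of $\cC_q$, and for this we may take $K \in \cK_q$, i.e. $K$ compact in $\cC$. For such $K$ and any $Z \in \cC$ we have $\Map_\cC(K, i_q r_q Z) \simeq \Map_{\cC_q}(K, r_q Z) \simeq \Map_\cC(i_q K, Z) = \Map_\cC(K,Z)$ by the adjunction and full faithfulness of $i_q$. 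Hence
\[
\Map_\cC(K,\, i_q r_q X) \simeq \Map_\cC(K, X) \simeq \colim_\alpha \Map_\cC(K, X_\alpha) \simeq \colim_\alpha \Map_\cC(K, i_q r_q X_\alpha),
\]
where the middle equivalence uses compactness of $K$ in $\cC$. On the other hand, since $i_q K$ is compact \emph{in $\cC_q$} (a set of compact generators of a presentable category consists of compact objects there, and compactness in $\cC$ forces compactness in the closed-under-colimits subcategory $\cC_q$), we have $\Map_\cC(K, i_q \colim_\alpha r_q X_\alpha) \simeq \Map_{\cC_q}(K, \colim_\alpha r_q X_\alpha) \simeq \colim_\alpha \Map_{\cC_q}(K, r_q X_\alpha) \simeq \colim_\alpha \Map_\cC(K, i_q r_q X_\alpha)$. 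Comparing the two computations shows the map $\colim_\alpha r_q X_\alpha \to r_q X$ induces an equivalence on $\Map_\cC(K,-)$ for all $K \in \kK_q$, where I mean $\kK_q$; as these generate $\cC_q$ under colimits, the map is an equivalence.

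Correcting the stray macro: the generating set is $\cK_q$ throughout. The main obstacle, and the only place any care is needed, is the interchange of $\Map$ with the filtered colimit: this requires compactness on \emph{both} sides — compactness of $K$ as an object of $\cC$ (to pull the colimit out of $\Map_\cC(K,X) = \Map_\cC(K,\colim X_\alpha)$), and compactness of the image $i_qK$ as an object of $\cC_q$ (to pull the colimit out of $\Map_{\cC_q}(K, \colim r_q X_\alpha)$). The first is exactly axiom \ref{a3}; the second follows because $\cC_q \hookrightarrow \cC$ preserves colimits and is fully faithful, so $\Map_{\cC_q}(i_qK, -) \simeq \Map_\cC(K, i_q(-))$ commutes with the filtered colimits that $\cC_q$ computes as in $\cC$. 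Once both compactness statements are in hand, the two-sided computation above forces the comparison map to be an equivalence, completing the proof.
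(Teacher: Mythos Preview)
Your argument is correct and matches the paper's approach: both obtain $r_q$ from the adjoint functor theorem \cite[Corollary 5.5.2.9]{lurie_htt} after noting that $i_q$ preserves colimits. For the filtered-colimit statement the paper simply observes that $i_q$ preserves compact objects and invokes \cite[Proposition 5.5.7.2]{lurie_htt} directly, whereas you unpack that result by hand via the mapping-space computation on the compact generators $\cK_q$; the underlying content is identical.
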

\begin{proof}
  Since the inclusion functor preserves colimits, it has a right adjoint by the adjoint functor theorem \cite[Corollary 5.5.2.9]{lurie_htt}. Note that, by construction, the inclusion functor preserves compact objects. If follows from \cite[Proposition 5.5.7.2]{lurie_htt} that $r_q$ preserves filtered colimits. 
\end{proof}
\begin{rem}
  If $\cC_q$ is a stable subcategory, then $r_q$ commutes with all colimits \cite[Proposition 1.4.4.1(2)]{ha}.
\end{rem}
\begin{defn}
  For any $E \in \cC$, we define $f_q(E) = i_q \circ r_q(E)$. 
\end{defn}
The following is a standard example. 
\begin{exmp}\label{exmp:postnikov}
    Taking $\cC = (\operatorname{Sp},\mathbb{S},\otimes)$ to the category of spectra and $\cK_q = \{\Sigma^m \mathbb{S} \mid m \ge q \}$, we see that $f_q(E)$ is the $(q-1)$-connected cover of $E$, and the diagram
  \[
\cdots \to f_{q+1}E \to f_qE \to f_{q-1}E \to\cdots
  \]
is the dual  Postnikov tower of $E$. 
\end{exmp}
The counit of the adjunction gives rise to a morphism $f_qE \to E$. In fact, by \cite[Proposition 1.4.4.11]{ha} there exists a $t$-structure on $\cC$ such that $\cC_q = \cC_{\ge 0}$. In particular, there is a functorial cofiber sequence
\[
f_qE \to E \to c_qE
\]
such that $f_qE \in \cC_q$ and $c_qE \in \cC_q^{\perp}$, the full subcategory of $\cC$ consisting of those $X \in \cC$ such that $\Hom_{\cC}(Y,X)$ is contractible for all $Y \in \cC_q$.  The map $f_qE \to E$ is characterized up to a contractible space of choices by the properties that $f_qE \in \cC_q$ and $\Hom_{\cC}(M,f_qE) \xr{\simeq} \Hom_{\cC}(M,E)$ for all $M \in \cC_q$. Similarly, the map $E \to c_qE$ is characterized by the properties that $c_qE \in \cC_q^{\perp}$ and $\Hom_{\cC}(c_qE,N) \xr{\simeq} \Hom_{\cC}(E,N)$ for all $N \in \cC_q^{\perp}$. 
This leads to the following recognition principle which is proved as in \cite[Lemma 4.16]{hhr}.
\begin{lem}\label{lem:recognitionconn}
  Suppose there is a fiber sequence 
  \[
F_q \to E \to C_q
  \]
  such that $F_q \in \cC_q$ and $C_q \in (\cC_q)^{\perp}$. Then, the canonical maps $F_q \to f_qE$ and $c_qE \to C_q$ are equivalences. 
\end{lem}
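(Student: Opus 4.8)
The plan is to reduce everything to the two mapping-space characterizations recorded just above, following the pattern of \cite[Lemma 4.16]{hhr}. The first step is to write down the two canonical comparison maps. Because $F_q \in \cC_q$, the property $\Hom_\cC(M, f_qE) \xr{\simeq} \Hom_\cC(M, E)$, applied with $M = F_q$, shows that the structure map $F_q \to E$ lifts through $f_qE \to E$ by an essentially unique map $F_q \to f_qE$; this is the first canonical map. Dually, because $C_q \in (\cC_q)^{\perp}$, the property $\Hom_\cC(c_qE, N) \xr{\simeq} \Hom_\cC(E, N)$, applied with $N = C_q$, shows that $E \to C_q$ factors through $E \to c_qE$ by an essentially unique map $c_qE \to C_q$, which is the second canonical map.

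Next I would check that the given fiber sequence \emph{itself} realizes the characterizing properties of $f_qE \to E$ and $E \to c_qE$, so that the comparison maps are forced to be equivalences by the uniqueness clauses. For the first: $F_q \in \cC_q$ holds by hypothesis, so it remains to see that $\Hom_\cC(M, F_q) \xr{\simeq} \Hom_\cC(M, E)$ for every $M \in \cC_q$. Applying $\Hom_\cC(M, -)$ to the fiber sequence $F_q \to E \to C_q$ yields a fiber sequence of spaces $\Hom_\cC(M, F_q) \to \Hom_\cC(M, E) \to \Hom_\cC(M, C_q)$ whose right-hand term is contractible, since $M \in \cC_q$ and $C_q \in (\cC_q)^{\perp}$; hence the left-hand map is an equivalence and $F_q \to f_qE$ is an equivalence. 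Symmetrically, applying $\Hom_\cC(-, N)$ for $N \in (\cC_q)^{\perp}$ to the cofiber sequence $F_q \to E \to C_q$ gives a fiber sequence $\Hom_\cC(C_q, N) \to \Hom_\cC(E, N) \to \Hom_\cC(F_q, N)$ with contractible right-hand term, so $E \to C_q$ has the characterizing properties of $E \to c_qE$ and $c_qE \to C_q$ is an equivalence.

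The argument is essentially formal, so I do not expect a serious obstacle; the one point I would be careful about is that the subcategories $\cC_q$ are not stable — they are not closed under fibers or desuspensions — so one should not try to build or invert the comparison maps by forming (co)fibers inside $\cC_q$. Running the whole argument through the mapping-space characterizations of $f_q$ and $c_q$ in the ambient stable category $\cC$ sidesteps this, and also keeps the two canonical maps (which point in opposite directions, $F_q \to f_qE$ and $c_qE \to C_q$) from being conflated with the maps one obtains by passing to (co)fibers of the evident commuting square over $E$.
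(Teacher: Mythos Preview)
Your proof is correct and is precisely the argument the paper has in mind: the paper does not spell out a proof but simply cites \cite[Lemma 4.16]{hhr}, and what you have written is exactly that argument, run through the mapping-space characterizations of $f_qE$ and $c_qE$ stated immediately before the lemma. Your closing remark about not working inside $\cC_q$ (which need not be stable) is a sensible caution, though in practice the argument never needs to.
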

Since $\cC_{q+1} \subseteq \cC_{q}$ it is not hard to verify that the natural morphism $f_{q+1}f_qE \to f_{q+1}E$ is an equivalence, thus giving rise to a morphism $f_{q+1}E \to f_q{E}$. 
\begin{defn}
  The $q$-th slice of $E$, denoted $s_qE$, is the cofiber of the natural map $f_{q+1}E \to f_qE$. 
\end{defn}
\begin{rem}\label{rem:fibcofiberslice}
  By construction there is an equivalence $s_qE \simeq c_{q+1}f_qE$. It would also be reasonable (as is done in the equivariant slice filtration) to define $s_qE$ as the fiber of the map $c_{q+1}E \to c_{q+1}c_{q}E \simeq c_qE$. Since this fiber is also easily seen to be equivalent to $c_{q+1}f_qE \simeq s_qE$, it makes no difference.  
\end{rem}

The following is an immediate consequence of \Cref{lem:colim}. 
\begin{lem}\label{lem:colimsq}
 Both $f_q$ and $s_q$ commute with filtered colimits. 
\end{lem}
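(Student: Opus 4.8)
The plan is to deduce both statements from the factorization $f_q = i_q \circ r_q$ together with \Cref{lem:colim}. First I would note that the inclusion $i_q \colon \cC_q \hookrightarrow \cC$ preserves all colimits — this is exactly how $\cC_q$ was arranged in \ref{a3}, and it is already invoked in the proof of \Cref{lem:colim}. Since \Cref{lem:colim} tells us that the right adjoint $r_q$ commutes with filtered colimits, the composite $f_q = i_q \circ r_q$ commutes with filtered colimits as well, which gives the first half of the claim.

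For $s_q$ I would argue that the cofiber functor $\operatorname{cofib}\colon \operatorname{Fun}(\Delta^1,\cC) \to \cC$ preserves all colimits, being computed by a pushout. Given a filtered diagram $\{E_\alpha\}$ in $\cC$, the natural maps $f_{q+1}E_\alpha \to f_q E_\alpha$ assemble into a filtered diagram in $\operatorname{Fun}(\Delta^1,\cC)$ whose colimit is computed termwise; by the previous paragraph this termwise colimit is the arrow $f_{q+1}(\colim_\alpha E_\alpha) \to f_q(\colim_\alpha E_\alpha)$. Applying $\operatorname{cofib}$ and using that it commutes with filtered colimits yields $s_q(\colim_\alpha E_\alpha) \simeq \colim_\alpha s_q E_\alpha$. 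Alternatively, one can use the identification $s_q E \simeq c_{q+1} f_q E$ from \Cref{rem:fibcofiberslice}: the functor $c_{q+1}$, being the pointwise cofiber of $f_{q+1} \to \id$, inherits commutation with filtered colimits from $f_{q+1}$, and composing with $f_q$ gives the result.

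I do not expect any genuine obstacle here; this lemma is essentially bookkeeping. The one point that must be in place is that $f_{q+1}E \to f_q E$ is natural in $E$, so that it really defines a natural transformation one can pass to the colimit — this functoriality is precisely what was set up just before the definition of $s_q E$, arising from the counit $f_{q+1}f_q E \to f_q E$ and the equivalence $f_{q+1}f_q E \simeq f_{q+1}E$.
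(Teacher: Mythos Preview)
Your proposal is correct and is essentially the same approach as the paper's, which simply declares the lemma an immediate consequence of \Cref{lem:colim}; you have merely unpacked what ``immediate'' means by tracking filtered colimits through the composite $f_q = i_q \circ r_q$ and then through the defining cofiber sequence for $s_q$.
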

Once again, if $\cC_q$ is a stable subcategory of $\cC$, then $f_q$ and $s_q$ commute with all colimits. 

Finally, we note the following simple, but useful, result. 
\begin{lem}\label{lem:invertiblecomm}
  Suppose that $A \in \cC$ is invertible and has the following property: If $X \in \cC_q$, then $A \otimes X \in \cC_{q+k}$ for some fixed integer $k \in \Z$, independent of $q$. Then, $f_q(A \otimes E) \simeq A \otimes f_{q-k}E$ for any $E \in \cC$, and similar for $s_q(A \otimes E)$. 
\end{lem}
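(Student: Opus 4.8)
The plan is to verify, using the recognition principle \Cref{lem:recognitionconn}, that the obvious candidate map realizes $f_q(A\otimes E)$. First note that since $\otimes$ commutes with colimits in each variable and $A$ is invertible, the functor $A\otimes(-)\colon\cC\to\cC$ is an auto-equivalence with inverse $A^{-1}\otimes(-)$; being a left adjoint between stable categories it is exact, and in particular it preserves fiber/cofiber sequences. So I would apply $A\otimes(-)$ to the functorial cofiber sequence $f_{q-k}E\to E\to c_{q-k}E$, in which $f_{q-k}E\in\cC_{q-k}$ and $c_{q-k}E\in\cC_{q-k}^{\perp}$, obtaining a fiber sequence
\[
A\otimes f_{q-k}E\to A\otimes E\to A\otimes c_{q-k}E .
\]
By the hypothesis on $A$ the fiber term $A\otimes f_{q-k}E$ lies in $\cC_q$. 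The remaining point is to check that the cofiber term $A\otimes c_{q-k}E$ lies in $\cC_q^{\perp}$; granting this, \Cref{lem:recognitionconn} yields canonical equivalences $A\otimes f_{q-k}E\xrightarrow{\simeq}f_q(A\otimes E)$ and $c_q(A\otimes E)\xrightarrow{\simeq}A\otimes c_{q-k}E$.

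For the $\perp$-membership I would argue as follows. By \ref{a3} the category $\cC_q$ is generated under colimits and extensions by $\cK_q$, and $\Hom_{\cC}(-,N)$ carries colimits to limits and cofiber sequences to fiber sequences; hence the class of $Y$ with $\Hom_{\cC}(Y,N)\simeq\ast$ is closed under colimits and extensions, so $N\in\cC_q^{\perp}$ as soon as $\Hom_{\cC}(\kappa,N)\simeq\ast$ for every $\kappa\in\cK_q$. Applying this with $N=A\otimes c_{q-k}E$ and using the adjunction $\Hom_{\cC}(\kappa,A\otimes c_{q-k}E)\simeq\Hom_{\cC}(A^{-1}\otimes\kappa,c_{q-k}E)$, we are reduced to knowing $A^{-1}\otimes\kappa\in\cC_{q-k}$, which is the hypothesis applied to the (invertible) object $A^{-1}$, shifting the filtration by $-k$. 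I expect this to be the main obstacle: it is the only place where the hypothesis on $A$ — and, as the argument makes clear, its companion statement for $A^{-1}$, i.e. $A^{\pm1}\otimes\cC_q\subseteq\cC_{q\pm k}$ — genuinely enters, the rest being formal bookkeeping with exactness of $A\otimes(-)$ and the universal properties of the $f_q$.

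Finally, for the slices, the equivalence $f_q(A\otimes E)\simeq A\otimes f_{q-k}E$ produced above is through the canonical maps, hence natural in $E$ and compatible with the tower maps $f_{q+1}\to f_q$. Therefore
\[
s_q(A\otimes E)=\operatorname{cofib}\bigl(f_{q+1}(A\otimes E)\to f_q(A\otimes E)\bigr)\simeq\operatorname{cofib}\bigl(A\otimes f_{q-k+1}E\to A\otimes f_{q-k}E\bigr)\simeq A\otimes s_{q-k}E,
\]
using once more that $A\otimes(-)$ preserves cofiber sequences. This completes the plan; as noted, the one nontrivial input is the verification that $A\otimes c_{q-k}E\in\cC_q^{\perp}$, which rests on compact generation of $\cC_q$ (axiom \ref{a3}) together with the shifting property of $A^{-1}$.
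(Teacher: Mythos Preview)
Your approach via the recognition principle (\Cref{lem:recognitionconn}) is in the same spirit as the paper's Yoneda argument, but you have not closed the gap you yourself flag. The ``companion statement'' $A^{-1}\otimes\cC_q\subseteq\cC_{q-k}$ is \emph{not} part of the hypothesis, and writing ``which is the hypothesis applied to the (invertible) object $A^{-1}$'' is not valid: the lemma only assumes the shifting property for $A$, and the dual property for $A^{-1}$ is not automatic from invertibility alone. So your verification that $A\otimes c_{q-k}E\in\cC_q^{\perp}$ is incomplete.

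The paper handles this by reversing the order of attack. It first runs the argument for $A^{-1}\otimes E$ rather than $A\otimes E$: for $X\in\cC_q$ one has
\[
\Hom_{\cC}(X,A^{-1}\otimes E)\simeq\Hom_{\cC}(A\otimes X,E)\simeq\Hom_{\cC}(A\otimes X,f_{q+k}E)\simeq\Hom_{\cC}(X,A^{-1}\otimes f_{q+k}E),
\]
and here only the given hypothesis $A\otimes X\in\cC_{q+k}$ is used. From the resulting identification $f_q(A^{-1}\otimes E)\simeq A^{-1}\otimes f_{q+k}E$ (for all $E$ and $q$) the paper then \emph{deduces} the $A^{-1}$ shifting: taking $E=X\in\cC_q$ and replacing $q$ by $q-k$ gives $f_{q-k}(A^{-1}\otimes X)\simeq A^{-1}\otimes f_qX\simeq A^{-1}\otimes X$, hence $A^{-1}\otimes X\in\cC_{q-k}$. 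Only after this is the argument re-run for $A\otimes E$, where it now goes through. Your direct attack on $A\otimes E$ needs the $A^{-1}$ shifting as an input, so without this detour (or some other justification) the argument is circular.
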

\begin{proof}
 Suppose $X \in \cC_q$. It follows that there are equivalences
  \begin{align*}
\Hom_{\cC}(X,f_q(A^{-1} \otimes E)) &\simeq \Hom_{\cC}(X,A^{-1} \otimes E) \\
& \simeq \Hom_{\cC}(X \otimes A,E)\\
& \simeq \Hom_{\cC}(X \otimes A,f_{q+k}E) \\
& \simeq \Hom_{\cC}(X,A^{-1} \otimes f_{q+k}E).
\end{align*}
By the Yoneda lemma we see that $f_q(A^{-1} \otimes E) \simeq A^{-1} \otimes f_{q+k}E$ - it follows that if $X \in \cC_q$, then $A^{-1} \otimes X \in \cC_{q-k}$ for all $q$.  
Now running the same argument with $A \otimes E$ instead of $A^{-1} \otimes E$ shows that $f_q(A \otimes E) \simeq A \otimes f_{q-k}E$. The result for $s_q(A \otimes E)$ follows from the defining cofiber sequences.
\end{proof}
\subsection{Pelaez's theorem}\label{sec:pelaez}
In the motivic category, Pelaez \cite{pelaez_functor} studied the behavior of the slice filtration under pullback. His results generalize to our setting, giving very general criteria for when slices commute with functors. 

Recall that a functor $F\colon \cC \to \cD$ between stable $\infty$-categories is called exact if $F$ carries zero objects into zero objects and preserves fiber sequences. Now let $\cC$ and $\cD$ be stably monoidal categories with slice filtrations $\{ \cC_i \}$ and $\{ \cD_i \}$ respectively, and let $F \colon \cC \to \cD$ be an exact functor. We are interested in the relationship between $F(f_q^{\cC}E)$ and $f_q^{\cD}F(E)$, and similar for $F(s_q^{\cC}E)$ and $s_q^{\cD}F(E)$. The simplest case is when the functor has a particularly nice left adjoint. 
\begin{lem}\label{lem:adjointfunctor}
  Suppose $G \colon \cC \to \cD$ is an exact functor between stably monoidal categories with slice filtrations satisfying $G(\cC_q) \subseteq \cD_q$ for all $q \in \Z$. Suppose moreover that $G$ has a left adjoint $F$ such that $F(\cD_q) \subseteq \cC_q$ for all $q \in \Z$. Then, there are natural equivalences $\alpha_q \colon G(f_q^{\cC}E) \xr{\sim} f_q^{\cD}G(E)$ and $\beta_q \colon G(s_q^{\cC}E) \xr{\sim} s_q^{\cD}G(E)$ for all $E \in \cC$ and $q \in \Z$. 
\end{lem}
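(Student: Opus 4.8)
The plan is to reduce everything to the recognition principle of \Cref{lem:recognitionconn}, applied in $\cD$ to the image under $G$ of the defining cofiber sequence in $\cC$. I would start from the functorial cofiber sequence $f_q^{\cC}E \to E \to c_q^{\cC}E$, in which $f_q^{\cC}E \in \cC_q$ and $c_q^{\cC}E \in (\cC_q)^{\perp}$. Since $G$ is exact it carries this into a fiber sequence $G(f_q^{\cC}E) \to G(E) \to G(c_q^{\cC}E)$ in $\cD$. By hypothesis $G(f_q^{\cC}E) \in \cD_q$, so it only remains to check that $G(c_q^{\cC}E) \in (\cD_q)^{\perp}$. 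For any $Y \in \cD_q$ the adjunction gives $\Hom_{\cD}(Y, G(c_q^{\cC}E)) \simeq \Hom_{\cC}(F(Y), c_q^{\cC}E)$, and $F(Y) \in \cC_q$ by the assumption $F(\cD_q) \subseteq \cC_q$, while $c_q^{\cC}E \in (\cC_q)^{\perp}$; hence this mapping space is contractible. Thus \Cref{lem:recognitionconn} applies and the canonical comparison maps $G(f_q^{\cC}E) \to f_q^{\cD}G(E)$ and $c_q^{\cD}G(E) \to G(c_q^{\cC}E)$ are equivalences.

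For naturality I would take $\alpha_q$ to be the map induced by the universal property: since $G(f_q^{\cC}E) \in \cD_q$, the composite $G(f_q^{\cC}E) \to G(E)$ factors, essentially uniquely, through the counit $f_q^{\cD}G(E) \to G(E)$, and this factorization is natural in $E$ because $f_q^{\cC}$, $f_q^{\cD}$ and $G$ are functors and the relevant counits are natural transformations. In this way $\{\alpha_q\}$ is a genuine natural transformation of functors $\cC \to \cD$ which is pointwise an equivalence by the previous paragraph, hence an equivalence of functors; similarly $c_q^{\cD}G(E) \to G(c_q^{\cC}E)$ is natural.

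To obtain $\beta_q$, recall that $s_q^{\cC}E = \operatorname{cofib}(f_{q+1}^{\cC}E \to f_q^{\cC}E)$, the map being the canonical one coming from $\cC_{q+1} \subseteq \cC_q$. Applying $G$ and using that an exact functor between stable $\infty$-categories also preserves cofiber sequences, $G(s_q^{\cC}E) \simeq \operatorname{cofib}\!\big(G(f_{q+1}^{\cC}E) \to G(f_q^{\cC}E)\big)$. The square comparing $G(f_{q+1}^{\cC}E) \to G(f_q^{\cC}E)$ with $f_{q+1}^{\cD}G(E) \to f_q^{\cD}G(E)$ via $\alpha_{q+1}$ and $\alpha_q$ commutes, again by the universal characterization of the maps $f_{q+1} \to f_q$, so passing to cofibers yields the natural equivalence $\beta_q \colon G(s_q^{\cC}E) \xr{\sim} s_q^{\cD}G(E)$.

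The only genuinely substantive step is the perpendicularity statement $G(c_q^{\cC}E) \in (\cD_q)^{\perp}$, which is precisely where the hypothesis on the left adjoint $F$ enters; everything else is formal bookkeeping with \Cref{lem:recognitionconn} and the functoriality of the counits. I expect the one point requiring mild care is articulating naturality cleanly in the $\infty$-categorical setting, i.e. exhibiting $\alpha_q$ as an actual natural transformation rather than a pointwise family of equivalences — but this is exactly what the universal mapping property characterizing $f_q^{\cD}$ up to a contractible space of choices provides.
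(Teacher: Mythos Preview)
Your proof is correct and follows essentially the same approach as the paper: apply $G$ to the defining cofiber sequence, verify $G(f_q^{\cC}E)\in\cD_q$ and $G(c_q^{\cC}E)\in(\cD_q)^{\perp}$ (the latter via the adjunction with $F$), invoke \Cref{lem:recognitionconn}, and then pass to cofibers for $s_q$. Your additional discussion of naturality is more careful than the paper's treatment, but the substantive argument is identical.
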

\begin{proof}
  We first prove the result for $G(f_q^{\cC}E)$ - the defining cofiber sequences then show the corresponding result for $G(s_q^{\cC}E)$. This is a simple consequence of \Cref{lem:recognitionconn}. Indeed, we have a cofiber sequence 
  \[
G(f_q^{\cC}E) \to G(E) \to G(c_q^{\cC}E)
  \]
  for which we need to show that $G(f_q^{\cC}E) \in \cD_q$ and $G(c_q^{\cC}E) \in \cD^{\perp}_q$. The first follows by assumption (since ($f_q^{\cC}E \in \cC_q$), while for the second we have
  \[
\Hom_{\cD}(X,G(c_q^{\cC}E)) \simeq \Hom_{\cC}(F(X),c_q^{\cC}E) \simeq \ast
  \]
  for any $X \in \cD_q$, since $c_q^{\cC}E \in \cC_q^{\perp}$ by construction. It follows that $G(c_q^{\cC}E) \in \cD_q^{\perp}$ as required. 
\end{proof}

\begin{exmp}[Slices and base change]
  Given a stably monoidal category $\cC$, there exists an $\infty$-category $\CAlg(\cC)$   of commutative algebra objects in $\cC$, see \cite[Chapter 2]{ha}. Given such an $A \in \CAlg(\cC)$ we can form the category $\Mod_{\cC}(A)$ of $A$-modules in $\cC$, which is a stably monoidal category $A$ with the relative $A$-linear tensor product \cite[Section 4.5]{ha}. 

The following is not hard to verify. 
\begin{lem}\label{lem:modules}
  Given a slice filtration $\{ C_i \}$ on $\cC$, there exists a slice filtration on $\Mod_{\cC}(A)$, defined by letting $\Mod_{\cC}(A)_i$ be the smallest full subcategory of $\Mod_{\cC}(A)$ generated under colimits and extensions by $\cK_i \otimes A$ for each compact generator $\cK_i$ of $\cC_i$. 
\end{lem}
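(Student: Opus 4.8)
The plan is to verify the six axioms \ref{a1}--\ref{a6} of \Cref{defn:slicefiltration} for the candidate family $\{\Mod_{\cC}(A)_i\}_{i\in\Z}$. Write $F = A\otimes(-)\colon \cC \to \Mod_{\cC}(A)$ for the free module functor: since $A$ is commutative $F$ is symmetric monoidal, and as a left adjoint to the forgetful functor $U$ it is exact and preserves all small colimits. Recall too that $\Mod_{\cC}(A)$ is presentable and $U$ is conservative and colimit-preserving \cite[Sections 4.2 and 4.5]{ha}. I will use repeatedly the elementary observation that if $H\colon \cal{A}\to\cal{B}$ is an exact, colimit-preserving functor of presentable stable $\infty$-categories and $\cal{S}\subseteq\cal{A}$ is a set, then $H$ sends the smallest full subcategory of $\cal{A}$ closed under small colimits and extensions and containing $\cal{S}$ into the corresponding subcategory of $\cal{B}$ generated by $H(\cal{S})$, since the preimage under $H$ of the latter is full, contains $\cal{S}$, and is closed under colimits and extensions. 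Finally, two facts will be needed: the objects $\cK_i\otimes A := F(\cK_i)$ are compact in $\Mod_{\cC}(A)$, because $\Map_{\Mod_{\cC}(A)}(k\otimes A,-)\simeq\Map_{\cC}(k,U(-))$ commutes with filtered colimits; and if $\cal{G}$ is a set of compact generators of $\cC$ then $\{g\otimes A\mid g\in\cal{G}\}$ is one for $\Mod_{\cC}(A)$, by conservativity of $U$.

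Given these, axiom \ref{a1} is automatic (a subcategory defined as the smallest one closed under colimits and extensions containing a set is replete) and \ref{a3} is the compactness fact just noted. For \ref{a2}: each $k\in\cK_{i+1}$ lies in $\cC_{i+1}\subseteq\cC_i$, hence is built from $\cK_i$ under colimits and extensions, so $k\otimes A\in\Mod_{\cC}(A)_i$ by the observation applied to $F$; thus $\Mod_{\cC}(A)_{i+1}\subseteq\Mod_{\cC}(A)_i$. For \ref{a4}: the tensor unit of $\Mod_{\cC}(A)$ is $A = F(\unit_{\cC})$ and $\unit_{\cC}\in\cC_0$ by \ref{a4} for $\cC$, so $A\in\Mod_{\cC}(A)_0$. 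For \ref{a5}: with the generating set $\{g\otimes A\}$ above, each $g$ lies in some $\cC_{i(g)}$ by \ref{a5} for $\cC$, so $g\otimes A\in\Mod_{\cC}(A)_{i(g)}$ as in \ref{a2}.

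The substance is \ref{a6}: for $X\in\Mod_{\cC}(A)_0$ and $Y\in\Mod_{\cC}(A)_n$ we must show $X\otimes_A Y\in\Mod_{\cC}(A)_n$. Since $(-)\otimes_A Y$ is exact and colimit-preserving and $\Mod_{\cC}(A)_0$ is generated under colimits and extensions by $\cK_0\otimes A$, the observation reduces us to $X = k\otimes A$ with $k\in\cK_0$. Fixing such a $k$, the functor $(k\otimes A)\otimes_A(-)$ is again exact and colimit-preserving, and $\Mod_{\cC}(A)_n$ is generated by $\cK_n\otimes A$, so we reduce further to $Y=\ell\otimes A$ with $\ell\in\cK_n$. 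Now $F$ symmetric monoidal gives $(k\otimes A)\otimes_A(\ell\otimes A)\simeq F(k\otimes_{\cC}\ell)$, and $k\otimes_{\cC}\ell\in\cC_n$ by axiom \ref{a6} for $\cC$ --- the only place the multiplicativity of the slice filtration of $\cC$ is used --- whence $F(k\otimes_{\cC}\ell)\in\Mod_{\cC}(A)_n$ by the observation. This completes the verification.

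The only step that is not purely formal is \ref{a6}, and the point to be careful about there is to use exactly the weak multiplicativity \ref{a6} of $\cC$ and not a stronger relation $\cC_n\otimes\cC_m\subseteq\cC_{n+m}$, which may fail in the examples of interest. The one genuine external input is the package of standard facts that $\Mod_{\cC}(A)$ is presentable with $U$ conservative and colimit-preserving, which is precisely what makes $\cK_i\otimes A$ simultaneously compact, generating, and amenable to the reduction-to-generators arguments used throughout; everything else is formal manipulation of subcategories closed under colimits and extensions.
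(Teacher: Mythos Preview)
Your verification is correct. The paper does not actually supply a proof of this lemma --- it is preceded by the sentence ``The following is not hard to verify'' and left to the reader --- so there is no argument to compare against; your direct check of axioms \ref{a1}--\ref{a6} is exactly the natural way to fill in the omitted details, and each step is sound.
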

We will write $f_q^{A}$ and $s_q^{A}$ for the corresponding functors. There is an adjoint pair
\[
\xymatrix{
  - \otimes A \colon \cC \ar@<0.5ex>[r] & \ar[l] \Mod_{\cC}(A) \colon U
}
\]
where $U$ denotes the forgetful functor, and we will apply \Cref{lem:adjointfunctor} to the functor $U$. 
\begin{lem}
  Let $A$ be a commutative algebra object in $\cC$ and assume that $A \in \cC_0$, then there are equivalences
    \begin{align*}
\alpha_q \colon U(f_q^{A}E) \simeq f_{q}^{\cC}U(E) \\
\beta_q \colon U(s_q^{A}E) \simeq s_{q}^{\cC}U(E)
  \end{align*}
  for any $E \in \Mod_{\cC}(A)$. 
\end{lem}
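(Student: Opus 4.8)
The plan is to deduce this directly from \Cref{lem:adjointfunctor} applied to the forgetful functor $G = U\colon \Mod_{\cC}(A) \to \cC$, whose left adjoint is the free module functor $F = - \otimes A\colon \cC \to \Mod_{\cC}(A)$. Both $\cC$ and $\Mod_{\cC}(A)$ are stably monoidal, $\Mod_{\cC}(A)$ carries the slice filtration $\{\Mod_{\cC}(A)_i\}$ of \Cref{lem:modules}, and $U$ is exact (the forgetful functor from a module category preserves zero objects and all limits and colimits, in particular fiber sequences). So the only thing left to check is the pair of containments required by \Cref{lem:adjointfunctor}, namely $F(\cC_q) \subseteq \Mod_{\cC}(A)_q$ and $U(\Mod_{\cC}(A)_q) \subseteq \cC_q$ for all $q \in \Z$; once these hold, \Cref{lem:adjointfunctor} supplies the asserted natural equivalences $\alpha_q$ and $\beta_q$.

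For the first containment, recall that $\cC_q$ is generated under colimits and extensions by the compact set $\cK_q$, and $\Mod_{\cC}(A)_q$ is by definition generated under colimits and extensions by $\cK_q \otimes A$. The functor $- \otimes A$ is exact and preserves colimits (it is a left adjoint, and the monoidal product commutes with colimits in each variable), so the full subcategory of $\cC$ consisting of those $X$ with $X \otimes A \in \Mod_{\cC}(A)_q$ is closed under colimits and extensions and contains $\cK_q$; hence it contains all of $\cC_q$. For the second containment, the underlying object of $\cK_q \otimes A$ is $\cK_q \otimes A \in \cC$, and since $A \in \cC_0$ and $\cK_q \in \cC_q$, axiom \ref{a6} gives $\cK_q \otimes A \in \cC_q$. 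Because $U$ preserves colimits and is exact, the full subcategory of $\Mod_{\cC}(A)$ consisting of those $M$ with $U(M) \in \cC_q$ is closed under colimits and extensions, and it contains the generators $\cK_q \otimes A$, hence contains all of $\Mod_{\cC}(A)_q$. This establishes both containments, and the lemma follows.

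The only mildly delicate point is the bootstrapping in the last two steps: passing from the containment on the generating sets $\cK_q$ and $\cK_q \otimes A$ to the full subcategories $\cC_q$ and $\Mod_{\cC}(A)_q$. This uses that $U$ and $- \otimes A$ each preserve colimits and carry cofiber sequences to cofiber sequences, together with the defining property that $\cC_q$ (resp. $\Mod_{\cC}(A)_q$) is the smallest full subcategory closed under colimits and extensions containing the relevant generators. The role of the hypothesis $A \in \cC_0$ is exactly to feed axiom \ref{a6}, which is what keeps the underlying objects of the $A$-module generators inside $\cC_q$; without it the second containment would fail.
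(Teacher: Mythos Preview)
Your proposal is correct and follows essentially the same approach as the paper: both apply \Cref{lem:adjointfunctor} to $G=U$ with left adjoint $-\otimes A$, verify the two containments by reducing to generators (using that $U$ and $-\otimes A$ preserve colimits and extensions), and invoke axiom \ref{a6} together with $A\in\cC_0$ to get $\cK_q\otimes A\in\cC_q$. Your write-up is slightly more explicit about the bootstrapping step, but the argument is the same.
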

\begin{proof}
   We need to show that $U(f_q^{A}E) \in \cC_q$ and $M \otimes A \in \cD_q$ whenever $M \in \cC_q$. The latter is clear from the definition of $\cD_q$ and the fact that tensor products commute with colimits. For the former, note that $\cC_q$ is generated under colimits and extensions by $\cK_q \otimes A$, and it is easily seen to be enough to check that $U(\cK_q \otimes A) \in \cC_q$ (since $U$ preserves colimits). Since $U(\cK_q \otimes A) \simeq \cK_q \otimes A$, it follows that we must check that $\cK_q \otimes A \in \cC_q$. But since $\cK_q \in \cC_q$ and $A \in \cC_0$, we have $\cK_q \otimes A \in \cC_q$ by \ref{a6}. 
\end{proof}
\begin{rem}
  In the motivic context this example is well known, see for example \cite[Lemma 2.1(4)]{levine_tripathi}.
\end{rem}
\end{exmp}
\begin{exmp}[Multiple slice filtrations]\label{exmp:multslices}
  Given a stably monoidal category $\cC$, there can be many different slice filtrations. Suppose we are given two slice filtrations $\{\cC_q\}$ and $\{\widetilde{\cC}_q\}$ on $\cC$ with corresponding functors $f_q$ and $\widetilde{f}_q$. Assume that $\widetilde{\cC}_i \subseteq \cC_i$ for all $i \in \Z$. By \Cref{lem:recognitionconn} (note that $\cC_i^\perp \subseteq \widetilde{\cC}_i^{\perp}$) we see that if $f_qE \in \widetilde{\cC}_q$ for all $q \in \Z$, then there are equivalences $f_qE \xr{\simeq} \widetilde{f}_qE$ and $s_qE \xr{\simeq} \widetilde{s}_qE$. This can also be proved by using the identity functor in \Cref{lem:adjointfunctor}. 
\end{exmp}

When we study the interaction between slices and Betti realization, \Cref{lem:adjointfunctor} will not suffice, and we need a stronger result, which we base on a theorem of Pelaez \cite{pelaez_functor}.  We begin with the following lemma, which is also well known in the motivic context. 
\begin{lem}\label{lem:colimdecomp}
Let $\cC$ be a stably monoidal category with a slice filtration, then the canonical morphism 
  \[
\phi \colon \colim_{p \le q} f_pE \xrightarrow{} E 
  \]
  is an equivalence for any $q \in \Z$. 
\end{lem}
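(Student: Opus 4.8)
The plan is to rephrase the statement as the vanishing of a colimit of ``co-slices'' and then to prove that vanishing by testing orthogonality against every $\cC_n$.

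First I would use the functorial cofiber sequence $f_pE \to E \to c_pE$ with $c_pE \in \cC_p^\perp$. Since $\cC_{p+1}\subseteq\cC_p$ by \ref{a2}, these are compatible as $p$ runs over the poset $\{p\in\Z : p\le q\}$, with structure maps induced by $f_{p+1}E\to f_pE$; this poset is filtered, so the colimit functor over it is exact (filtered colimits are left exact in a presentable stable $\infty$-category) and carries the constant diagram on $E$ to $E$. Hence there is a cofiber sequence $\colim_{p\le q} f_pE \xrightarrow{\phi} E \to \colim_{p\le q} c_pE$, and it suffices to show that $C:=\colim_{p\le q} c_pE$ is zero.

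Next I would show that $C\in\cC_n^\perp$ for every $n\in\Z$. Fix $n$ and a compact generator $k\in\cK_n$ of $\cC_n$. For every $p\le n$ we have $k\in\cC_n\subseteq\cC_p$ by \ref{a2}, hence $\Hom_\cC(k,c_pE)\simeq\ast$ because $c_pE\in\cC_p^\perp$. Since $k$ is compact and $\{p\le\min(n,q)\}$ is cofinal in $\{p\le q\}$ (every $p\le q$ maps to $\min(p,n)\le\min(n,q)$), we get
\[
\Hom_\cC(k,C)\simeq\colim_{p\le q}\Hom_\cC(k,c_pE)\simeq\colim_{p\le\min(n,q)}\Hom_\cC(k,c_pE)\simeq\ast .
\]
By \ref{a3}, $\cC_n$ is generated by $\cK_n$ under colimits and extensions, and the full subcategory of those $Y$ with $\Hom_\cC(Y,C)\simeq\ast$ is closed under both operations (mapping spaces send colimits to limits and cofiber sequences to fiber sequences), so $\Hom_\cC(Y,C)\simeq\ast$ for all $Y\in\cC_n$; that is, $C\in\cC_n^\perp$.

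Finally, by \ref{a5} each $g\in\cal{G}$ lies in some $\cC_i$, and each $\cC_i$ is closed under suspension (being closed under colimits), so every shift $\Sigma^jg$ lies in $\bigcup_i\cC_i$ (taking $\cal{G}$ closed under desuspension, as is standard for a set of compact generators). Thus $C\in\bigcap_i\cC_i^\perp$ is right-orthogonal to every shift of every compact generator of $\cC$, which forces $C\simeq 0$ and hence $\phi$ an equivalence. I expect the only real subtlety to be the bookkeeping forced by the fact that the $\cC_q$ need not be stable, so $f_q$ commutes only with filtered colimits (\Cref{lem:colim}) rather than with all colimits: this is exactly why one cannot simply note that the class of $E$ for which $\phi$ is an equivalence is localizing and contains the generators, but must instead argue by orthogonality, using filteredness of $\{p\le q\}$ both to make the colimit of the cofiber sequences a cofiber sequence and to commute $\Hom_\cC(k,-)$ past the colimit.
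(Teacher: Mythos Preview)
Your argument is correct and reaches the conclusion by a route that is close to, but not identical with, the paper's. The paper tests $\phi$ directly against each compact generator $g\in\cal G$: choosing $i$ with $g\in\cC_i$ via \ref{a5}, it uses the adjunction $\Hom_\cC(g,-)\simeq\Hom_\cC(g,f_i(-))$ and then invokes \Cref{lem:colim} (that $f_i$ commutes with filtered colimits) to rewrite $f_i(\colim_{p\le q}f_pE)$ as $\colim_{p\le q}f_if_pE$, after which the observation $f_if_pE\simeq f_iE$ for $p\le i$ (a cofinal range) finishes the argument. You instead pass to the cofiber $C=\colim_{p\le q}c_pE$ and show $C\in\cC_n^\perp$ for every $n$ by commuting $\Hom_\cC(k,-)$ past the filtered colimit using compactness of $k\in\cK_n$, then close up from $\cK_n$ to $\cC_n$. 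The trade-off: your proof avoids appealing to \Cref{lem:colim}/\Cref{lem:colimsq} entirely, at the cost of the extra closure step; the paper's proof is a line shorter but leans on that lemma. Both arguments share the same mild implicit hypothesis---that the mapping spaces $\Hom_\cC(g,-)$ for $g\in\cal G$ jointly detect equivalences, i.e.\ that $\cal G$ may be taken closed under desuspension---which you flag explicitly and the paper leaves tacit.
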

\begin{proof}
 Let $\cal{G}$ be a set of compact generators of $\cC$. It suffices to prove that $\Hom_{\cC}(g,\phi)$ is an equivalence for each $g \in \cal{G}$. By \ref{a5} each $g$ is contained in some $\cC_i$, and so it suffices to show that $\Hom_{\cC}(g,f_i\phi)$ is an equivalence for suitable $i$. By \Cref{lem:colimsq} we are reduced to showing that 
  \[
\Hom_{\cC}(g,\colim_{p \le q}f_if_pE) \to \Hom_{\cC}(g,f_iE)
  \]
is an equivalence. But it is easy to see that the natural transformation $f_i(\epsilon) \colon f_if_p \to f_i \text{id} \simeq f_i$ is a natural equivalence whenever $p \le i$, and the result follows.  
\end{proof}
We now define the precise conditions that will be used in Pelaez's theorem.  Throughout this paper, we will consistently require the following conditions:
  \begin{enumerate}[align =left, label=Condition (\arabic*):]
    \item $F(E) \stackrel{\eqref{lem:colimdecomp}}{\simeq} F (\colim_{p \le q} f^{\cC}_pE) \simeq \colim_{p \le q} F(f^{\cC}_pE)$ \label{cond1}.
    \item $F(f_q^{\cC}E) \in \cD_{q}$ \label{cond2}.
    \item $F(s^{\cC}_qE) \in \cD_{q+1}^{\perp}$ \label{cond3}. 
  \end{enumerate}

  Observe that the counit adjunctions give rise to morphisms, 
  \[
\xymatrix{ f_{q}^{\cD}F(E) & \ar[l]_-{f_q^{D}F(\varepsilon)} f_{q}^{\cD}Ff^{\cC}_{q}(E) \ar[r]^-{\tilde \varepsilon } & F f^{\cC}_{q}(E)}
  \]
for any $E \in \cC$. If \hyperref[cond2]{Condition (2)} is satisfied for all $q \in \Z$, then $\widetilde{\varepsilon}$ is an equivalence for all $q \in \Z$, and hence there are morphisms
\[
\alpha_q \colon F(f^{\cC}_qE) \to f_{q}^{\cD}F(E). 
\]
By the definition of $s_q$ there is an induced morphism
\[
\beta_q \colon F(s^{\cC}_q(E)) \to s_{q}^{\cD}F(E).
\]
making the following diagram of cofiber sequences commute:
\[\setlength\mathsurround{0pt}
\begin{tikzcd}
  F(f_{q+1}^{\cC}E) \arrow{r} \arrow{d}{\alpha_{q+1}} & F(f_{q}^{\cC}E) \arrow{d}{\alpha_q} \arrow{r} & F(s_q^{\cC}E) \arrow{d}{\beta_q}\arrow{r} & \Sigma F(f_{q+1}^{\cC}E) \arrow{d}{\Sigma \alpha_{q+1}} \\
  f_{q+1}^{\cD}F(E) \arrow{r} & f_q^{\cD}F(E) \arrow{r} & s_q^{\cD}F(E) \arrow{r} & \Sigma f_{q+1}^{\cD}F(E).
\end{tikzcd}
\]
\begin{defn}
  We say that $F \colon (\cC,\cC_q) \to (\cD,\cD_q)$ is compatible with the slice filtration at $E$ if $\alpha_q$ and $\beta_q$ are equivalences for all $q \in \Z$. If the slices filtrations on $\cC$ and $\cD$ are understood, then we will simply say that $F \colon \cC \to \cD$ is compatible with the slice filtration at $E$.
\end{defn}
Note that this implies that the towers $F(f_{q+1}^{\cC}E)$ are $f_{q+1}^{\cD}F(E)$ are equivalent. Pelaez's theorem then gives precise conditions to ensure that an exact functor $F$ is compatible with the slice filtration at a given object $E \in \cC$. 
\begin{thm}[Pelaez]\label{thm:pelaez}
  Suppose that $F \colon \cC \to \cD$ is an exact functor between stably monoidal categories with slices filtrations $\{ \cC_i \}$ and $\{ \cal{D}_i \}$ respectively. If $F$ satisfies \hyperref[cond1]{Condition (1)}, \hyperref[cond2]{Condition (2)}, and \hyperref[cond3]{Condition (3)} for $E \in \cC$, then $F \colon (\cC,\cC_i) \to (\cD,\cD_i)$ is compatible with the slice filtration at $E \in \cC$.  
\end{thm}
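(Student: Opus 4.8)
The plan is to reduce everything, for each fixed $q \in \Z$, to the single assertion that $F(c_q^{\cC}E) \in \cD_q^{\perp}$. Granting this, apply \Cref{lem:recognitionconn} to the cofiber sequence $F(f_q^{\cC}E) \to F(E) \to F(c_q^{\cC}E)$ obtained by applying the exact functor $F$ to $f_q^{\cC}E \to E \to c_q^{\cC}E$: the fibre term lies in $\cD_q$ by \hyperref[cond2]{Condition (2)}, the cofibre term in $\cD_q^{\perp}$ by assumption, so the canonical comparison map $F(f_q^{\cC}E) \to f_q^{\cD}F(E)$ is an equivalence. One must check this canonical map agrees with the $\alpha_q$ constructed before the statement — it does, since both arise from applying $f_q^{\cD}$ to $F(\varepsilon)$ and identifying $f_q^{\cD}F(f_q^{\cC}E)$ with $F(f_q^{\cC}E)$ via \hyperref[cond2]{Condition (2)}. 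Once all $\alpha_q$ are equivalences, $\beta_q$ is the induced map on the cofibres of $\alpha_{q+1}$ and $\alpha_q$ in the commuting ladder of cofibre sequences displayed above, hence also an equivalence.

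To produce $F(c_q^{\cC}E) \in \cD_q^{\perp}$, I would first exhibit $c_q^{\cC}E$ as a filtered colimit of finite pieces. By \Cref{lem:colimdecomp}, $E \simeq \colim_{p \le q} f_p^{\cC}E$; since this colimit is sequential (hence filtered, over a weakly contractible index) and cofibres commute with colimits, this gives
\[
c_q^{\cC}E \;\simeq\; \colim_{p \le q}\,\operatorname{cofib}\bigl(f_q^{\cC}E \to f_p^{\cC}E\bigr).
\]
Applying $F$, exactness lets it pass the finite cofibres and \hyperref[cond1]{Condition (1)} is exactly what is needed for $F$ to pass the colimit, yielding $F(c_q^{\cC}E) \simeq \colim_{p \le q} F\bigl(\operatorname{cofib}(f_q^{\cC}E \to f_p^{\cC}E)\bigr)$, a filtered colimit in $\cD$. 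Next, an induction on $q-p \ge 0$ shows each term lies in $\cD_q^{\perp}$: for $q=p$ it is $F$ of the zero object, and the octahedral axiom for $f_q^{\cC}E \to f_{p+1}^{\cC}E \to f_p^{\cC}E$ gives a cofibre sequence $\operatorname{cofib}(f_q^{\cC}E \to f_{p+1}^{\cC}E) \to \operatorname{cofib}(f_q^{\cC}E \to f_p^{\cC}E) \to s_p^{\cC}E$, whose first term maps (under $F$) into $\cD_q^{\perp}$ by induction and whose last term maps into $\cD_{p+1}^{\perp} \subseteq \cD_q^{\perp}$ by \hyperref[cond3]{Condition (3)} (as $p+1 \le q$), so the middle term maps into $\cD_q^{\perp}$ because right orthogonals are closed under extensions. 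Finally, since $\cD_q$ is generated by a set of compact objects by \ref{a3}, one has $\cD_q^{\perp} = \cK_q^{\perp}$, which is closed under filtered colimits, so the displayed colimit lands in $\cD_q^{\perp}$, as needed.

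I expect the only real friction to be in the colimit bookkeeping of the second step: arranging the tower $\{f_p^{\cC}E\}_{p \le q}$ as a genuinely filtered diagram (so that \hyperref[cond1]{Condition (1)} applies and so that $\cD_q^{\perp}$ is stable under the colimit), and correctly interleaving ``$F$ commutes with cofibres'' (exactness) with ``$F$ commutes with this particular colimit'' — the latter being the content of \hyperref[cond1]{Condition (1)}, which is precisely why it is phrased through \Cref{lem:colimdecomp}. Everything else — the induction via the octahedral axiom, the closure properties of $\cD_q^{\perp}$, the identification of the two comparison maps, and the passage from the $\alpha_q$ to the $\beta_q$ — is formal.
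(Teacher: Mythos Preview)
Your proof is correct and uses the same core ingredients as the paper's: the decomposition $E \simeq \colim_{p\le q} f_p^{\cC}E$ from \Cref{lem:colimdecomp}, \hyperref[cond1]{Condition (1)} to pass $F$ through that colimit, downward induction along the tower using the octahedral axiom, and \hyperref[cond3]{Condition (3)} to control the slice pieces. The packaging differs: the paper (following Pelaez) tracks the comparison maps $\beta_q(f_p^{\cC}E)$ through the induction, first establishing the base case $\beta_q(f_q^{\cC}E)$ via a separate lemma (\cite[Lemma 2.10]{pelaez_functor}), whereas you instead prove the single orthogonality statement $F(c_q^{\cC}E)\in\cD_q^{\perp}$ and then invoke \Cref{lem:recognitionconn} once at the end. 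Your route is a little cleaner --- the induction starts from the trivial object rather than from an auxiliary lemma, and you never need to chase the maps $\alpha_q,\beta_q$ through the colimit --- at the cost of needing the observation that $\cD_q^{\perp}$ is closed under filtered colimits, which you correctly extract from \ref{a3}.
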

\begin{proof}(Pelaez)
  We sketch the proof, since it is essentially the same as that given by Pelaez \cite[Theorem 2.12]{pelaez_functor} (see also the thesis of Kelly \cite[Section 4.2.2]{kelly_thesis}). We first note that the conditions of the theorem imply that the natural maps $\alpha_q$ and $\beta_q$ do exist. By the argument given in \cite[Lemma 2.10]{pelaez_functor} we have equivalences
       \[
\alpha_{q+1}(f^{\cC}_qE) \colon F (f^{\cC}_{q+1}(f^{\cC}_qE)) \xr{\simeq} f^{\cD}_{q+1}F(f^{\cC}_q E)
     \]
     and
     \[
\beta_{q}(f^{\cC}_qE) \colon F (s^{\cC}_q(f^{\cC}_qE)) \xr{\simeq} s_{q}^{\cD} F(f^{\cC}_q E)
     \]
     for all $q \in \Z$. 

     We now proceed to show the result for $s_q$, since essentially the same proof works for $f_q$. By \Cref{lem:colimdecomp} we have $E \simeq \colim_{p \le q}f_pE$. By \hyperref[cond1]{Condition (1)} and \Cref{lem:colimsq} the morphism $\beta_q(E)$ is given by $\colim_{p \le q}\beta_q(f^{\cC}_pE)$, and so it suffices to show that $\beta_q(f_p^{\cC}E)$ is an equivalence for all $p \le q$. By the discussion in the first paragraph it is true for $p = q$. The result then follows by a downward induction exactly as done by Pelaez. 
\end{proof}

\begin{rem}
   In the situation of \Cref{lem:adjointfunctor}, we have that \hyperref[cond2]{Condition (2)} and \hyperref[cond3]{Condition (3)} hold automatically - the former by assumption, and the latter by an easy adjunction argument. However, there seems to be no reason for \hyperref[cond1]{Condition (1)} to hold in general. The extra condition appears to arise because the assumptions in \Cref{lem:adjointfunctor} are stronger - if $F(c_{c+1}^{\cC}E) \in \cD_{q+1}^{\perp}$, then $F(s_{q}^{\cC}E) \in \cD_{q+1}^{\perp}$, but the converse need not be true. 
 \end{rem}

\section{The effective and cellular effective motivic slice filtrations}\label{sec:eff}
\subsection{The stable and cellular motivic homotopy category}
Let $\SH(S)$ denote the Morel--Voevodsky's stable motivic homotopy category over a base scheme $S$ \cite{morel_voe_a1}. We assume that $S$ is a Noetherian scheme of finite Krull dimension, and that $S$ is essentially smooth over a field of characteristic exponent $c$, i.e., $c=1$ when the characteristic is zero, and is the characteristic otherwise.\footnote{Some of the results in this paper can be extended e.g., to smooth schemes over Dedekind domains using work of Spitzweck \cite{1207.4078}, but we leave the details to the interested reader.}

By \cite[Corollary 1.2]{robalo} the $\infty$-category underlying this category (which we also denote by $\SH(S)$) is a stably symmetric monoidal category in our terminology. We let $\unit = \Sigma^{\infty}_+S$ denote the monoidal unit of this category. If $\Sm/S$ denotes the category of separated smooth schemes of finite type of $S$, then the set of objects
\[
\{ \Sigma^{p,q}\Sigma^{\infty}X_+\mid p,q \in \Z, X \in \Sm/S\}
\]
are a set of compact generators of $\SH(S)$ \cite[Theorem 9.1]{di_cell}. 

It is useful to consider the \emph{cellular} motivic category, as defined by Dugger--Isaksen \cite{di_cell}. 
\begin{defn}
  The cellular motivic category $\SH(S)_{\cell}$ is the localizing subcategory of $\SH(S)$ generated by ${\Sigma^{p,q}\unit}$ for all ${p,q \in \Z}$. A spectrum is called cellular if it lies in $\SH(S)_{\cell}$. 
\end{defn}
This subcategory is a stable presentable $\infty$-category by \cite[Proposition 1.4.4.11]{ha}. Since the tensor product in $\SH(S)$ commutes with colimits in both variables, it is easy to check that the tensor product of two cellular motivic spectra is again cellular, and hence that $\SH(S)_{\cell}$ is a stably monoidal category. 

Such cellular spectra include $\KGL$, the motivic spectrum representing algebraic $K$-theory, $\MGL$, the algebraic cobordism spectrum (both are proved in \cite{di_cell}), $\M A[1/c]$, the $c$-inverted motivic Eilenberg--Maclane spectrum associated to a abelian group $A$ \cite{hoyois_hhm}, and $\KQ$, the motivic spectrum representing Hermitian $K$-theory \cite{cell_KQ} (here we required that the base scheme has no points of characteristic two).

An argument similar to \Cref{lem:colim} shows that the inclusion $\SH(S)_{\cell} \subset \SH(S)$ has a cocontinuous right adjoint, which we denote $\Cell$. On the level of homotopy categories, this is equivalent to the functor studied by Dugger--Isaksen in \cite{di_cell}. 

Since we will need it later, we introduce a closely related subcategory.
\begin{defn}
    Let $S$ be an essentially smooth scheme over a field of characteristic exponent $c$. We say that $E \in \SH(S)$ is $c$-cellular if it is in the localizing subcategory generated by $\Sigma^{a,b}\unit[1/c]$ for $a,b \in \Z$. 
\end{defn}
\begin{rem}
  Clearly, if the characteristic exponent is 1, then $c$-cellular objects are simply cellular objects. In general, every $c$-cellular object is cellular, but the converse need not be true. 
\end{rem}

\subsection{Quotients and localizations of $\MGL$}\label{sec:quotlocMGL}
Of fundamental importance to us will be the algebraic cobordism spectrum $\MGL$. We briefly recall its construction here, referring the reader to \cite{voe_a1} for more details. Let $BGL_n$ denote the classifying space of the group scheme $GL_n$ over $S$. There is a universal bundle $\gamma_n^{\text{mot}} \to BGL_n$, and we let $MGL_n =(BGL_n)^{\gamma_n^{\text{mot}}}$ be the motivic Thom spectrum associated to this bundle. The canonical inclusion $BGL_n \to BGL_{n+1}$ and standard properties of Thom spectra gives rise to a morphism $\mathbb{P}^1\otimes MGL_{n} \to MGL_{n+1}$, and hence a motivic spectrum
\[
\MGL = (MGL_0,MGL_1,MGL_2,\ldots)
\]
Applying \cite[Lemma 6.1]{di_cell} there is an equivalence
\begin{equation}\label{eq:MGL}
\MGL \simeq  \colim_n \Sigma^{-2n,-n}\Sigma^{\infty}MGL_n \simeq \colim_n \Sigma^{-2n,-n}\Sigma^{\infty}(BGL_n)^{\gamma_n^{\text{mot}}}. 
\end{equation}

The spectrum $\MGL$ and certain quotients and localization of it, will be studied extensively in the sequel, and so we begin by defining exactly the spectra that we need, following \cite{levine_tripathi}. 

Recall that there is a classifying map $L \to \MGL_{\ast,\ast}$, where $L \cong \Z[a_1,a_2,\cdots]$ is the Lazard ring, see \cite[Sec.6.1]{hoyois_hhm} or \cite[Corollary 6.7]{nso_landweber} (here the grading is such that $a_i$ has bidegree $(2i,i)$). We will implicitly identify elements of $L$ with elements of $\MGL_{\ast,\ast}$. In fact, this is not such an abuse of notation; if $S$ is the spectrum of a field, then the map $L[1/c] \to \MGL_{(2,1)\ast}[1/c]$ is an equivalence \cite[Proposition 8.2]{hoyois_hhm}. 

First, we define $\MGL/a_i$ as the cofiber of $a_i \colon \Sigma^{2i,i}\MGL \to \MGL$. Given a finite collection $\{ i_1,i_2,\ldots, i_k\} \subset \mathbb{N}$ we define $\MGL/(a_{i_1},\ldots,a_{i_k})$ inductively by 
\[
\MGL/(a_{i_1},\ldots,a_{i_k}) = N/a_{i_k}
\]
where $N \simeq \MGL/(a_{i_1},\ldots,a_{i_{k-1}})$ (here the quotient $N/a_{i_k}$ is defined in the obvious way). By \cite[Remark 1.5]{levine_tripathi} this is equivalent to the $\MGL$-module
\[
\MGL/(a_{i_1},\ldots,a_{i_k}) = \MGL/a_{i_1} \otimes_{\MGL} \cdots \otimes_{\MGL} \MGL/a_{i_k}. 
\]

For an arbitrary subset $\mathcal{I} \subset \mathbb{N}$ we define
\[
\MGL/(\mathcal{I}) = \colim\limits_{\{ i_1, \ldots, i_k \} \subset \mathcal{I}} \MGL/(a_{i_1},\ldots,a_{i_k})
\]
where the colimit is taken over the filtered poset of finite subsets of $\mathcal{I}$. 

Now let $\cal{I}^{c}$ be the complement of $\cal{I}$, and let $\Z[\cal{I}^c]$ denote the graded polynomial ring on the $a_i,i \in \cal{I}^{c}$. Let $\cal{I}_0$ denote a collection of homogeneous elements of $\Z[\cal{I}^c]$, and define $\MGL/(\cal{I})[\cal{I}_0^{-1}] = \MGL/(\cal{I})[\{z_j^{-1} \mid z_j \in \cal{I}_0\}]$.

We will also have need to consider $p$-local and mod $p$-versions. Let $\MGL_{(p)}$ be the $p$-localization of $\MGL$. Explicitly, this can be given by the colimit of the maps
\[
\MGL \xr{n} \MGL
\]
where $n$ is an integer relatively prime to $p$. Then, for an arbitrary subset $\mathcal{I} \subset \N$ we define $\MGL_{(p)}/(\cal{I})$ as above.  Similarly, we can also define $\MGL_{(p)}/(\cal{I})[\cal{I}_0^{-1}] = \MGL_{(p)}/(\cal{I})[\{z_j^{-1}\}\mid z_j \in \cal{I}_0 \}]$. Finally, we write $\MGL/(\cal{I},p)[\cal{I}_0^{-1}]$ for the cofiber of the multiplication by $p$ map on $\MGL_{(p)}/(\cal{I})[\cal{I}_0^{-1}]$.

It will be useful to introduce terminology to describe these type of spectra. 
\begin{defn}\label{def:chromatictype}
  A motivic spectrum is said to be a localized quotient of $\MGL$ if it can be constructed by quotients and localization of $\MGL$ or $\MGL_{(p)}$ as above.  
\end{defn}
\begin{exmp}
  The following examples show that these give many analogs of spectra familiar in chromatic homotopy. Following \cite{levine_tripathi} define subsets
\[
\begin{split}
  B_p^c &= \{ a_i \mid i\ne p^k-1, k \ge 0\},\\
  B\langle n \rangle_p^c &= \{ a_i \mid i \ne p^k-1, 0 \le k \le n \},\\
  k \langle n \rangle_p^c & = \{ a_i \mid i \ne p^n-1 \}.
\end{split}
\]
These give rise to the following motivic spectra:
\[
\begin{split}
  \BP & = \MGL_{(p)}/(\{a_i \mid i \in B_p^c \}), \\
  \BP\langle n \rangle & = \MGL_{(p)}/(\{a_i \mid i \in B\langle n \rangle_p^c \}), \\
  \E(n) & = \BP \langle n \rangle[a_{p^n-1}^{-1}] \\
  \k(n) &= \MGL_{(p)}/(\{ a_i \mid i \in k \langle n \rangle_p^c \}) \\
  \K(n) &= \k(n)[a_{p^n-1}^{-1}]. 
\end{split}
\]
We note that $\BP$ and $\E(n)$ are Landweber exact over $\MGL$ but the other spectra constructed are not. 
\end{exmp}
Finally, we will need the following technical result on the slices of quotients of $\MGL$. 
\begin{prop}[Levine--Tripathi]\label{prop:lttechnicalassumption}
  Let $\cal{I} \subset \N$ be arbitrary, and let $\cal{I}^c$ be the complement of $\cal{I}$. Let $M$ denote either $\MGL/(\cal{I}), \MGL_{(p)}/(\cal{I})$ or $\MGL_{(p)}/(\cal{I},p)$, then there is an equivalence
  \[
s_0(M) \simeq M/(\cal{I}^{c}).
\]
\end{prop}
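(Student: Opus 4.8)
The plan is to reduce the statement to the known computation of the zeroth slice of $\MGL$ itself — namely the Hopkins--Morel--Hoyois isomorphism $s_0(\MGL) \simeq \M\Z$ — and then track what the quotient and localization operations do to zeroth slices, using the results on how $f_0$ and $s_0$ interact with cofiber sequences, filtered colimits, and multiplication by integers. First I would recall that for the motivic sphere $\unit$ over an essentially smooth scheme over a field, the effective slice filtration satisfies $s_0(\unit) \simeq \M\Z$ (after inverting the characteristic exponent), and that $\MGL$ is an effective motivic spectrum, so by the Hopkins--Morel--Hoyois theorem $s_0(\MGL) \simeq \M\Z$ as well. Since $a_i \in \MGL_{2i,i}$ has positive effective weight $i > 0$, the map $a_i \colon \Sigma^{2i,i}\MGL \to \MGL$ lands in $f_1$ (indeed $\Sigma^{2i,i}\MGL \in \SH(S)^{\eff}(i) \subseteq \SH(S)^{\eff}(1)$), so $s_0$ kills it: applying the triangulated functor $s_0$ to the cofiber sequence $\Sigma^{2i,i}\MGL \xrightarrow{a_i} \MGL \to \MGL/a_i$ yields $s_0(\MGL/a_i) \simeq s_0(\MGL) \oplus \Sigma^{1} s_0(\Sigma^{2i,i}\MGL)$... wait, more carefully: $s_0(a_i) = 0$ so the cofiber sequence splits and gives $s_0(\MGL/a_i) \simeq s_0(\MGL) \vee \Sigma^{2i+1, i} s_0(\MGL)$. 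But this is not quite $s_0(\MGL)/(a_i)$ in the naive sense; rather, the point is that $s_0(\MGL)$ is already an $\M\Z$-algebra, and the correct bookkeeping — done in Levine--Tripathi — shows that after passing through all the $a_j$ for $j \in \cal{I}$ one is left precisely with $s_0(\MGL)$ with the classes $a_i$, $i \in \cal{I}^c$, still acting, i.e. $\M\Z[\cal{I}^c] = M/(\cal{I}^c)$ where $M/(\cal{I}^c)$ denotes killing the remaining polynomial generators. The cleanest route is therefore not to re-derive this but to cite the slice computation $s_0(\MGL) \simeq \M\Z$ and then invoke the multiplicativity axiom \ref{a6} together with the fact that $s_0$ of an $\MGL$-module computes the base change of the module along $\MGL \to s_0(\MGL)$ — this is where the structure of $M$ as $\MGL/a_{i_1} \otimes_{\MGL}\cdots$ (for the finite case) gets used.

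The key steps, in order, would be: (1) Establish the finite case $M = \MGL/(a_{i_1},\dots,a_{i_k})$ by induction on $k$, using at each stage the cofiber sequence defining $N/a_{i_k}$ together with the vanishing of $s_q$ on $a_{i_k}$-multiplication and the identification of $s_0$ as a lax monoidal-type operation on $\MGL$-modules compatible with $\otimes_{\MGL}$; the base case $k=0$ is $s_0(\MGL)\simeq\MGL/(\cal{I}^c)$ when $\cal{I}=\emptyset$, i.e. the Hopkins--Morel--Hoyois theorem plus the observation that $\M\Z$ is $\MGL$ with \emph{all} the $a_i$ killed. (2) Pass to arbitrary $\cal{I}$ by writing $\MGL/(\cal{I}) = \colim_{\{i_1,\dots,i_k\}\subset\cal{I}} \MGL/(a_{i_1},\dots,a_{i_k})$ and using \Cref{lem:colimsq} (the cellular effective slice filtration here is the standard one, on which $s_0$ commutes with filtered colimits) to conclude $s_0(\MGL/(\cal{I})) \simeq \colim s_0(\MGL/(a_{i_1},\dots,a_{i_k})) \simeq \colim \MGL/(\cal{I}^c \cup \{\text{finite complement}\})$, which rearranges to $\MGL/(\cal{I}^c)$... again being careful that the colimit over finite subsets of $\cal{I}$ kills more and more of $\cal{I}$, leaving exactly $\cal{I}^c$. (3) Handle the $p$-local case $\MGL_{(p)}/(\cal{I})$ by noting $\MGL_{(p)} = \colim_n \MGL$ over multiplication by integers prime to $p$, so $s_0(\MGL_{(p)}) \simeq (s_0\MGL)_{(p)} \simeq \M\Z_{(p)}$ again by \Cref{lem:colimsq}, and then repeating steps (1)–(2). (4) Handle $\MGL_{(p)}/(\cal{I},p)$ by applying $s_0$ to the cofiber sequence $\MGL_{(p)}/(\cal{I}) \xrightarrow{p} \MGL_{(p)}/(\cal{I}) \to \MGL_{(p)}/(\cal{I},p)$; here multiplication by $p$ on $s_0$ is \emph{not} zero, so one genuinely gets $s_0(\MGL_{(p)}/(\cal{I},p)) \simeq s_0(\MGL_{(p)}/(\cal{I}))/p \simeq \M\F_p/(\cal{I}^c)$, matching $M/(\cal{I}^c)$.

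The main obstacle is step (1), specifically the claim that $s_0$ of the iterated relative tensor product $\MGL/a_{i_1}\otimes_{\MGL}\cdots\otimes_{\MGL}\MGL/a_{i_k}$ equals the iterated quotient of $s_0(\MGL)$ — this requires knowing that $s_0$ is (op)lax monoidal for $\otimes_{\MGL}$ and that the resulting comparison map $s_0(M)\otimes_{\M\Z} s_0(N) \to s_0(M\otimes_{\MGL}N)$ is an equivalence for these particular modules, or else a direct inductive argument with cofiber sequences that avoids monoidality altogether. I would lean on the direct inductive argument: since $a_{i_k}$ acts on $N$ through positive effective weight, $s_q(a_{i_k}\colon \Sigma^{2i_k,i_k}N \to N) = 0$ for $q \le 0$, so the cofiber sequence gives $s_0(N/a_{i_k})$ as an extension of $s_0(N)$ by $\Sigma^{2i_k+1,i_k} s_0(N)$ which is classified by a map that one identifies — via the module structure — with the residual action of $a_{i_k}$; the inductive hypothesis $s_0(N) \simeq \MGL/(\cal{I}'^c)$ (with $\cal{I}' = \{i_1,\dots,i_{k-1}\}$, so $\cal{I}'^c$ still contains $i_k$) then makes the comparison with $N/a_{i_k}$ having $s_0$ equal to $\MGL/((\cal{I}'\cup\{i_k\})^c)$ a bookkeeping exercise. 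In all of this I would cite Levine--Tripathi (whose \Cref{prop:lttechnicalassumption} this is) for the details rather than reproducing them, noting only that their proof goes through verbatim under our standing hypotheses on $S$ since it rests only on the Hopkins--Morel--Hoyois computation of $s_0(\MGL)$ and the elementary behavior of slices under the relevant cofiber sequences and filtered colimits.
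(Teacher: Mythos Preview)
Your overall strategy --- reduce to Hopkins--Morel--Hoyois, induct through finite quotients via cofiber sequences, pass to filtered colimits, then handle $p$-localization and mod $p$ by exactness of $s_0$ --- is the right shape, and your final move of citing Levine--Tripathi with the mod $p$ case deduced from exactness is exactly what the paper does.

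However, your sketch of the inductive step contains a genuine error that makes the argument look harder than it is. You write that the cofiber sequence $\Sigma^{2i,i}N \xrightarrow{a_i} N \to N/a_i$ gives, after applying $s_0$, an extension of $s_0(N)$ by $\Sigma^{2i+1,i}s_0(N)$. This is wrong: by \Cref{lem:shifts} one has $s_0(\Sigma^{2i,i}N) \simeq \Sigma^{2i,i}s_{-i}(N)$, not $\Sigma^{2i,i}s_0(N)$. Since $N$ is an iterated cofiber of maps between effective spectra, it is itself effective, so $s_{-i}(N) = 0$ for $i>0$. Thus $s_0(\Sigma^{2i,i}N) = 0$ and the cofiber sequence collapses to an equivalence $s_0(N) \xrightarrow{\simeq} s_0(N/a_i)$ on the nose --- there is no extension to classify, no ``residual action of $a_{i_k}$'' to identify, and no bookkeeping exercise. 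The induction is therefore trivial: $s_0(\MGL/(a_{i_1},\dots,a_{i_k})) \simeq s_0(\MGL)$ for every finite subset, and then the filtered colimit and $p$-local steps go through as you say.

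The remaining content of the proposition is the identification of this common value $s_0(\MGL)$ with $M/(\cal{I}^c) = \MGL/(a_1,a_2,\dots)$ as an $\MGL$-module (not merely abstractly as $\M\Z$), which is what Levine--Tripathi's argument actually supplies and what is needed to feed into their Corollary~2.4. Your proposal does not address this identification beyond hand-waving about monoidality, so citing their paper for it, as you ultimately do, is appropriate.
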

\begin{proof}
  The case where $M = \MGL_{(p)}/(\cal{I})$ is shown in the proof of \cite[Proposition 4.5]{levine_tripathi}, and the case $M = \MGL/(\cal{I})$ can be proved in the same way. The mod $p$ case then follows since the functor $s_0$ is exact. 
\end{proof}
\subsection{Effective and cellular effective slices}\label{sec:cellular}

Following Voevodsky \cite{voe_open}, consider the collection
\[\cK_q = \{ \Sigma^{a,b}\Sigma^\infty X_+ \mid X \in \Sm/S , a \in \Z, b \ge q \} \subseteq \SH(S) .\] 
Let $\Sigma^{q}_T \SH(S)^{\eff} \subseteq \SH(S)$ denote the localizing subcategory generated by $\cK_q$. It is then easy to see that this forms a slice filtration of $\SH(S)$ in the sense of \Cref{defn:slicefiltration}. Indeed, \ref{a1}-\ref{a5} follow immediately, and \ref{a6} can be checked on generators, for which it is seen to be true.\footnote{In fact, by \cite{grso_slices}, it even satisfies the property that if $X \in \cC_i$ and $Y \in \cC_j$, then $X \otimes Y \in \cC_{i+j}$.}  
The filtration of $\SH(S)$ by the $q$-th connective covers
\[
\cdots \subset \Sigma^{q+1}_T \SH(S)^{\eff} \subset \Sigma^{q}_T \SH(S)^{\eff} \subset \cdots
\]
is Voevodsky's slice filtration, which we call the effective motivic slice filtration. We let $f_q^{S}$ and $s_q^{S}$ denote the associated functors, although we will omit the superscript unless it is unclear. 

As noted in the introduction, many motivic spectra are cellular. The analog of the effective slice filtration in $\SH(S)_{\cell}$ is defined by the collection  collection
\[
\cK^{\cell}_q = \{ \Sigma^{a,b}\unit \mid, a \in \Z, b \ge q \} \subseteq \SH(S)_{\cell}.\] 
In particular, if we let $\Sigma^q_T\SH(S)^{\eff}_{\cell} \subseteq \SH(S)_{\cell}$ denote the localizing subcategory of $\SH(S)_{\cell}$ generated by $\cK^{\cell}_q$ we get the cellular effective motivic slice filtration 
\[
\cdots \Sigma^{q+1}_T \SH(S)^{\eff}_{\cell} \subset \Sigma^{q}_T \SH(S)^{\eff}_{\cell} \subset \cdots.
\]
This forms a slice filtration of $\SH(S)_{\cell}$. We let $f_q^{\Cell}$ and $s_q^{\Cell}$ denote the associated functors. The following simple result is very useful. 
\begin{lem}\label{lem:shifts}
  For any $a,b \in \Z$ and any motivic spectrum $E$, we have $f_q(\Sigma^{a,b}E)\simeq \Sigma^{a,b}f_{q-b}E$, and similar for $s_q,f_q^{\Cell}$ and $s_q^{\Cell}$. 
\end{lem}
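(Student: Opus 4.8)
The plan is to invoke \Cref{lem:invertiblecomm} with the invertible object $A = \Sigma^{a,b}\unit$. The hypothesis of that lemma asks for a fixed integer $k$ such that tensoring with $A$ sends $\cC_q$ into $\cC_{q+k}$ for all $q$; here I expect $k = b$, independently of $a$. The conclusion $f_q(A\otimes E)\simeq A\otimes f_{q-k}E$ then reads $f_q(\Sigma^{a,b}E)\simeq \Sigma^{a,b}f_{q-b}E$, which is exactly the claim, and the statement for $s_q$ follows formally from the defining cofiber sequence, again as recorded in \Cref{lem:invertiblecomm}.

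So the real content is verifying the hypothesis: if $X\in\Sigma^{q}_T\SH(S)^{\eff}$ then $\Sigma^{a,b}X\in\Sigma^{q+b}_T\SH(S)^{\eff}$, and symmetrically for the cellular version. First I would reduce to generators. The subcategory $\Sigma^{q}_T\SH(S)^{\eff}$ is the localizing subcategory generated by $\cK_q = \{\Sigma^{c,d}\Sigma^\infty Y_+ \mid Y\in\Sm/S,\ c\in\Z,\ d\ge q\}$, and the functor $\Sigma^{a,b}(-)$ is an equivalence, hence preserves colimits and extensions; therefore it suffices to check that $\Sigma^{a,b}$ carries each generator of $\cK_q$ into $\Sigma^{q+b}_T\SH(S)^{\eff}$. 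But $\Sigma^{a,b}\Sigma^{c,d}\Sigma^\infty Y_+ \simeq \Sigma^{a+c,\,b+d}\Sigma^\infty Y_+$, and since $d\ge q$ we get $b+d\ge b+q = q+b$, so this object lies in $\cK_{q+b}\subseteq \Sigma^{q+b}_T\SH(S)^{\eff}$. This establishes one inclusion; running the same computation with $\Sigma^{-a,-b}$ gives that $\Sigma^{a,b}$ sends $\cK_{q}$ into $\Sigma^{q+b}_T\SH(S)^{\eff}$ and its inverse sends $\cK_{q+b}$ back into $\Sigma^{q}_T\SH(S)^{\eff}$, so in fact $\Sigma^{a,b}$ restricts to an equivalence $\Sigma^{q}_T\SH(S)^{\eff}\xr{\simeq}\Sigma^{q+b}_T\SH(S)^{\eff}$ for every $q$. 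The cellular case is identical, replacing $\Sigma^{c,d}\Sigma^\infty Y_+$ by $\Sigma^{c,d}\unit$ throughout; the generators $\cK^{\cell}_q$ behave the same way under bigraded suspension. The $c$-cellular case, if needed, works verbatim with $\unit[1/c]$ in place of $\unit$.

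I do not anticipate a genuine obstacle here: the statement is essentially bookkeeping about how the weight index $b$ shifts the ``effective'' degree, and the one subtlety — that $f_q$ need not commute with desuspension in the simplicial direction (the subcategories $\cC_i$ are not closed under all shifts) — is precisely why the lemma is phrased via the invertible object formalism of \Cref{lem:invertiblecomm} rather than by a naive adjunction computation. The mild point to be careful about is that $\Sigma^{a,b}$ shifts the effective filtration level by $b$ and not by $a+b$ or anything involving $a$; this is because the generators $\Sigma^{c,d}\unit$ are indexed into $\cK_q$ purely by the condition $d\ge q$ on the second (weight) coordinate, with no constraint on the first coordinate. Given that, the verification of the hypothesis of \Cref{lem:invertiblecomm} is immediate, and the lemma does the rest.
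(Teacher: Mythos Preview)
Your proposal is correct and follows exactly the paper's approach: the paper's proof is the single line ``Apply \Cref{lem:invertiblecomm} with $A = \Sigma^{a,b}\unit$,'' and you have simply unpacked the verification of its hypothesis (that tensoring with $\Sigma^{a,b}\unit$ shifts the filtration level by $b$) by checking on generators, which the paper leaves implicit.
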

\begin{proof}
  Apply \Cref{lem:invertiblecomm} with $A = \Sigma^{a,b} \unit$. 
\end{proof}

\subsection{The comparison theorem}
Our main result in this section is to compare the effective and cellular effective slice filtrations. The motivation for this arises in later sections, where we will compare the motivic and $C_2$-equivariant categories, because we understand precisely the behavior of the motivic spheres under equivariant Betti realization.  

We start with the following lemma.
\begin{lem}\label{lem:ccellularslices}
    Let $S$ be an essentially smooth scheme over a field of characteristic exponent $c$. Then, the slices of any $c$-cellular spectrum are cellular. 
\end{lem}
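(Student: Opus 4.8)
The plan is to reduce the statement to the single spectrum $\unit[1/c]$ and then invoke the known computation of the slices of the motivic sphere spectrum. The point of the hypothesis is precisely this last input: the slices of $\unit$ are assembled from motivic Eilenberg--MacLane spectra, which are not known to be cellular in positive characteristic, but become cellular once the characteristic exponent $c$ is inverted, by work of Hoyois.

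First I would observe that $\Sigma^q_T\SH(S)^{\eff}$ is a localizing, hence stable, subcategory of $\SH(S)$, so by the discussion following \Cref{lem:colim} and \Cref{lem:colimsq} the functors $f_q$ and $s_q$ commute with all colimits; moreover they are exact, $f_q$ being a colimit-preserving right adjoint between stable $\infty$-categories and $s_q$ the cofiber of the natural transformation $f_{q+1}\to f_q$. Since the cellular spectra form a localizing subcategory of $\SH(S)$ --- in particular one closed under colimits, extensions and retracts --- the full subcategory $\mathcal{D}\subseteq\SH(S)$ consisting of those $E$ with $s_qE$ cellular for all $q\in\Z$ is again localizing. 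By \Cref{lem:shifts} we have $s_q(\Sigma^{a,b}\unit[1/c])\simeq\Sigma^{a,b}s_{q-b}(\unit[1/c])$, and shifts of cellular spectra are cellular, so $\mathcal{D}$ contains every generator $\Sigma^{a,b}\unit[1/c]$ of the subcategory of $c$-cellular spectra as soon as it contains $\unit[1/c]$. It therefore suffices to show that $s_q(\unit[1/c])$ is cellular for every $q\in\Z$.

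For this, note that $\unit\in\Sigma^0_T\SH(S)^{\eff}$ and $\unit[1/c]=\colim\bigl(\unit\xr{c}\unit\xr{c}\cdots\bigr)$ is a filtered colimit of effective spectra, so $\unit[1/c]$ is effective and $s_q(\unit[1/c])=0$ for $q<0$. For $q\ge 0$, \Cref{lem:colimsq} gives $s_q(\unit[1/c])\simeq(s_q\unit)[1/c]$, and here I would quote the computation of the slices of the motivic sphere spectrum --- Levine's proof of Voevodsky's slice conjecture over perfect fields, extended to essentially smooth bases by base change --- which exhibits $s_q\unit$ as a finite iterated extension of shifts $\Sigma^{a,b}\M A$ of motivic Eilenberg--MacLane spectra with $A$ a finitely generated abelian group. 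Inverting $c$ replaces each such piece by $\Sigma^{a,b}\M A[1/c]$, which is cellular by \cite{hoyois_hhm}, and cellular spectra are closed under the finitely many extensions involved; hence $s_q(\unit[1/c])$ is cellular. The formal bookkeeping in the first two paragraphs is routine; the essential ingredient, which I would simply cite, is the structure of the slices of the sphere spectrum, and it is exactly this ingredient that forces the passage from $\unit$ to $\unit[1/c]$.
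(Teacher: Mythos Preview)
Your approach is essentially the same as the paper's: reduce by a localizing-subcategory argument and \Cref{lem:shifts} to the single case $E=\unit[1/c]$, then invoke the explicit description of its slices in terms of motivic Eilenberg--MacLane spectra with coefficients in Adams--Novikov $E_2$-groups, and finish by citing Hoyois for the cellularity of $\M A[1/c]$.

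One wrinkle in your order of operations deserves comment. You write $s_q(\unit[1/c])\simeq (s_q\unit)[1/c]$ and then appeal to Levine's computation of $s_q\unit$, planning to invert $c$ afterwards to make the Eilenberg--MacLane pieces cellular. But in positive characteristic the identification of $s_q\unit$ as built from motivic Eilenberg--MacLane spectra itself rests on the Hopkins--Morel equivalence $\MGL/(a_1,a_2,\ldots)\simeq\M\Z$, and that equivalence is only known after inverting the characteristic exponent --- this is exactly Hoyois's contribution. So one cannot first describe $s_q\unit$ and then invert $c$; one must quote the computation of $s_q(\unit[1/c])$ directly, as the paper does via \cite[Theorem~8.7]{hoyois_hhm}. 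This is a matter of citing the result in its correct form rather than a defect in the strategy; once corrected, your argument and the paper's coincide. (Incidentally, the slices are in fact wedges of suspended $\M A[1/c]$'s, not merely iterated extensions, but either description suffices for cellularity.)
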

\begin{proof}
  By a localizing subcategory argument it suffices to show the result for $\Sigma^{a,b}\unit[1/c]$ and $a,b \in \Z$. By \Cref{lem:shifts} we can reduce further to checking the statement for $\unit[1/c]$ itself. 

  Let $E_2^{s,t}(\MU) = \Ext^{s,t}_{\MU_*\MU}(\MU_*,\MU_*)$ be the cohomology of the Hopf algebroid associated to complex cobordism. By \cite[Theorem 8.7]{hoyois_hhm} (following the ideas of Levine and Voevodsky) there is an equivalence
  \begin{equation}\label{eq:slicessphere}
s_q(\unit[1/c]) \simeq \bigvee_{s \ge 0} \Sigma^{2q-s,q} \M\Z[1/c] \otimes E_2^{s,2q}(\MU). 
  \end{equation}
  Note that by \cite[Proposition 2.2]{zahler_anss} $\Ext_{\MU_*\MU}^{s,2q}(\MU_*,\MU_*)$ is a finite group when $(s,q) \ne (0,0)$ (in which case it isomorphic to $\Z$). The result then follows from \cite[Proposition 8.1]{hoyois_hhm}. 
\end{proof}

\begin{lem}\label{prop:cell}
  Let $E$ be a motivic spectrum. If $\Cell (f_qE) \in \Sigma^{q}_T\SH(S)^{\eff}_{\cell}$ for all $q \in \Z$, then $\Cell \colon (\SH(S),\Sigma^q_T \SH(S)^{\eff}) \to (\SH(S)_{\cell},\Sigma^q_T \SH(S)^{\eff}_{\cell})$ is compatible with the slice filtration at $E$, i.e., there are equivalences
  \[
\xymatrix@R=0.6pc{
\alpha_q(E) \colon \Cell(f_qE) \ar[r]^-{\sim}& f_q^{\Cell}\Cell(E) \\
\beta_q(E) \colon \Cell(s_qE) \ar[r]^-{\sim}&  s_q^{\Cell}\Cell(E)}
 \]
for all $q \in \Z$. 
\end{lem}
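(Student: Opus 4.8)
The statement is an instance of Pelaez's theorem (\Cref{thm:pelaez}) applied to the functor $\Cell \colon \SH(S) \to \SH(S)_{\cell}$, so the strategy is to verify \hyperref[cond1]{Condition (1)}, \hyperref[cond2]{Condition (2)}, and \hyperref[cond3]{Condition (3)} for this functor at the given object $E$. The exactness of $\Cell$ is immediate since it is a right adjoint between stable $\infty$-categories whose left adjoint is the inclusion of a localizing subcategory. Condition (2) for all $q$ is precisely the hypothesis we are handed, namely $\Cell(f_qE) \in \Sigma^q_T\SH(S)^{\eff}_{\cell}$. So the real content is checking Conditions (1) and (3), after which \Cref{thm:pelaez} produces the asserted equivalences $\alpha_q$ and $\beta_q$ directly.

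\textbf{Condition (1).} We must show $\Cell$ commutes with the relevant filtered colimit, i.e. that $\Cell(\colim_{p \le q} f_pE) \simeq \colim_{p \le q}\Cell(f_pE)$. This is true because $\Cell$, being a right adjoint whose left adjoint (the inclusion of $\SH(S)_{\cell}$) preserves compact objects, commutes with filtered colimits by \cite[Proposition 5.5.7.2]{lurie_htt} — this is exactly the argument of \Cref{lem:colim}. Since the diagram $p \mapsto f_pE$ indexed by $p \le q$ is filtered, and its colimit is $E$ by \Cref{lem:colimdecomp}, Condition (1) follows.

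\textbf{Condition (3).} We must show $\Cell(s^{\cC}_qE) \in (\Sigma^{q+1}_T\SH(S)^{\eff}_{\cell})^{\perp}$, i.e. that $\Hom_{\SH(S)_{\cell}}(Y, \Cell(s_qE))$ is contractible for every $Y$ in the localizing subcategory generated by the $\Sigma^{a,b}\unit$ with $b \ge q+1$. By adjunction, $\Hom_{\SH(S)_{\cell}}(Y,\Cell(s_qE)) \simeq \Hom_{\SH(S)}(Y, s_qE)$, where on the right $Y$ is regarded as a cellular — hence in particular a motivic — spectrum. Now every such generator $\Sigma^{a,b}\unit$ with $b \ge q+1$ lies in $\Sigma^{q+1}_T\SH(S)^{\eff}$, and since $\Sigma^{q+1}_T\SH(S)^{\eff}$ is localizing it contains all of $\Sigma^{q+1}_T\SH(S)^{\eff}_{\cell}$; but $s_qE \in (\Sigma^{q+1}_T\SH(S)^{\eff})^{\perp}$ by the defining property of the slice (recall $s_qE \simeq c_{q+1}f_qE$ from \Cref{rem:fibcofiberslice}), so the mapping space vanishes. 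Hence $\Cell(s_qE) \in (\Sigma^{q+1}_T\SH(S)^{\eff}_{\cell})^{\perp}$, which is Condition (3).

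\textbf{Conclusion and main obstacle.} With Conditions (1)--(3) in hand, \Cref{thm:pelaez} applies verbatim and yields that $\Cell$ is compatible with the slice filtration at $E$, i.e. the natural maps $\alpha_q(E)\colon \Cell(f_qE) \to f_q^{\Cell}\Cell(E)$ and $\beta_q(E) \colon \Cell(s_qE) \to s_q^{\Cell}\Cell(E)$ are equivalences for all $q \in \Z$, which is exactly the claim. There is essentially no obstacle here beyond bookkeeping: all three conditions reduce to formal properties of $\Cell$ (right adjoint to a colimit-preserving inclusion of compact generators, hence itself continuous on filtered colimits and compatible with orthogonal complements), together with the single hypothesis on $\Cell(f_qE)$ built into the statement. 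The one point requiring a little care is the identification, in Condition (3), of the orthogonal complement $(\Sigma^{q+1}_T\SH(S)^{\eff}_{\cell})^{\perp}$ computed inside $\SH(S)_{\cell}$ with the restriction of the complement computed inside $\SH(S)$ — this works precisely because the inclusion $\SH(S)_{\cell}\hookrightarrow\SH(S)$ is fully faithful and the generators of the cellular effective category are a subset of the generators of the effective category.
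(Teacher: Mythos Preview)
Your proof is correct, but the paper takes a shorter route. Rather than invoking Pelaez's theorem (\Cref{thm:pelaez}) and verifying all three conditions, the paper simply appeals to \Cref{lem:adjointfunctor}: the left adjoint of $\Cell$ is the inclusion, which carries $\Sigma^q_T\SH(S)^{\eff}_{\cell}$ into $\Sigma^q_T\SH(S)^{\eff}$ by definition, and the hypothesis $\Cell(f_qE)\in\Sigma^q_T\SH(S)^{\eff}_{\cell}$ supplies the remaining input needed to run the recognition principle (\Cref{lem:recognitionconn}) on the cofiber sequence $\Cell(f_qE)\to\Cell(E)\to\Cell(c_qE)$. The key difference is that \Cref{lem:adjointfunctor} does not require \hyperref[cond1]{Condition (1)} at all (cf.\ the remark following \Cref{thm:pelaez}), so your verification that $\Cell$ commutes with the filtered colimit is unnecessary work. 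Your argument for \hyperref[cond3]{Condition (3)} is essentially the same adjunction argument as in the proof of \Cref{lem:adjointfunctor}, just applied to $s_qE$ rather than $c_qE$. Both approaches are valid; the paper's is more economical, while yours illustrates that this lemma is indeed a special case of the general Pelaez machinery.
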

\begin{proof}
This is a consequence of \Cref{lem:adjointfunctor}. Indeed, the left adjoint of $\Cell$ is the inclusion functor, and by definition $\Sigma^q_T \SH(S)_{\cell}^{\eff} \subseteq \Sigma_T^q \SH(S)^{\eff}$. 
\end{proof}
Of fundamental importance for us is the following result \cite[Theorem 5.7]{mot_twisted}. 
\begin{thm}[Spitzweck--{\O}stv{\ae}r]\label{thm:somglslice}
  The algebraic cobordism theorem $\MGL$ is in $\SH(S)^{\eff}_{\cell}$. 
\end{thm}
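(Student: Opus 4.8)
The final statement in the excerpt is Theorem~\ref{thm:somglslice} (Spitzweck--Østvær): the algebraic cobordism spectrum $\MGL$ lies in $\SH(S)^{\eff}_{\cell}$, i.e.\ in the localizing subcategory $\Sigma^0_T\SH(S)^{\eff}_{\cell}$ generated by the cells $\Sigma^{a,b}\unit$ with $b\ge 0$.

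**Plan of proof.** The plan is to use the explicit cellular presentation of $\MGL$ recalled in \eqref{eq:MGL},
\[
\MGL\simeq\colim_n \Sigma^{-2n,-n}\Sigma^\infty(BGL_n)^{\gamma_n^{\mathrm{mot}}},
\]
together with the fact that the subcategory $\Sigma^0_T\SH(S)^{\eff}_{\cell}$ is localizing, hence closed under colimits and extensions. Since a filtered colimit of objects of $\Sigma^0_T\SH(S)^{\eff}_{\cell}$ is again in $\Sigma^0_T\SH(S)^{\eff}_{\cell}$, it suffices to show that each term $\Sigma^{-2n,-n}\Sigma^\infty(BGL_n)^{\gamma_n^{\mathrm{mot}}}$ is effective cellular. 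First I would recall, following Dugger--Isaksen, that $MGL_n=(BGL_n)^{\gamma_n^{\mathrm{mot}}}$ admits a cell structure built from the motivic cells coming from the Schubert-type cell decomposition of the Grassmannians $\mathrm{Gr}(n,\infty)$ approximating $BGL_n$: the Thom space of the tautological bundle has a cell in bidegree $(2(n+d_\lambda),n+d_\lambda)$ for each partition $\lambda$ (the Thom class shifts the Schubert cell $\Sigma^{2d_\lambda,d_\lambda}\unit$ by the rank-$2n$ twist). Concretely, one filters $BGL_n$ by the finite Grassmannians $\mathrm{Gr}(n,m)$, obtains a finite cellular filtration on each $(\mathrm{Gr}(n,m))^{\gamma}$ whose associated graded is a wedge of spheres $\Sigma^{2j,j}\unit$ with $j\ge n$, and passes to the colimit over $m$. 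After the global shift by $\Sigma^{-2n,-n}$, every cell sits in bidegree $\Sigma^{2k,k}\unit$ with $k\ge 0$, hence lies in $\cK^{\cell}_0$. Therefore $\Sigma^{-2n,-n}\Sigma^\infty MGL_n$ is obtained from cells in $\cK^{\cell}_0$ by extensions and colimits, so it lies in $\Sigma^0_T\SH(S)^{\eff}_{\cell}$; and then so does the colimit $\MGL$.

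**The main obstacle.** The technical heart of the argument is establishing the cell structure on $MGL_n$ with the stated bidegree bounds. This requires (i) the motivic cell decomposition of finite and infinite Grassmannians, which is standard (each Schubert cell contributes $\Sigma^{2d_\lambda,d_\lambda}\unit$ with $d_\lambda\ge 0$), and (ii) the Thom isomorphism / Thom space cofiber-sequence argument showing that the Thom space of the rank-$n$ tautological bundle over a space with cells in bidegrees $\Sigma^{2j,j}$ has cells in bidegrees $\Sigma^{2(j+n),j+n}$ — the crucial point being that the \emph{only} cells that appear have weight $\ge n$, so that after the normalizing shift by $(-2n,-n)$ all cells have non-negative weight. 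The weight bookkeeping here is exactly what makes $MGL_n$ \emph{effective} cellular rather than merely cellular; once one is careful that every Schubert cell has non-negative codimension and the Thom twist adds exactly the rank, this is immediate. I would cite \cite[Theorem 5.7]{mot_twisted} for the details and structure the proof around the colimit-stability of localizing subcategories, treating the cell-structure claim as the input.
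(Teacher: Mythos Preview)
Your proposal is correct and in fact closely follows the original argument of Spitzweck--{\O}stv{\ae}r in \cite[Theorem 5.7]{mot_twisted}, which proceeds exactly by analyzing the Schubert cell structure on the finite Grassmannians and the induced cell structure on the Thom spaces $MGL_n$. The paper, however, packages the citation differently: it considers the unit map $\unit \to \MGL$ and invokes \cite{mot_twisted} only for the statement that the \emph{cofiber} of this map lies in $\Sigma^1_T\SH(S)^{\eff}_{\cell} \subset \SH(S)^{\eff}_{\cell}$; since $\unit \in \SH(S)^{\eff}_{\cell}$ trivially and this category is closed under extensions, $\MGL \in \SH(S)^{\eff}_{\cell}$ follows at once. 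Your direct approach has the advantage of being self-contained and of exhibiting the stronger fact---needed later in \Cref{rem:cellularquotients}---that every cell of $\MGL$ is of the form $\Sigma^{2k,k}\unit$ with $k\ge 0$, so $\MGL$ actually lies in $\SH(S)^{\veff}_{\cell}$; the paper's formulation is terser but defers that refinement to a separate remark.
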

\begin{proof}
  Consider the unit map $\unit \to \MGL$. The proof of Spitzweck--{\O}stv\ae r shows that the cofiber of this map is contained in $\Sigma^1_T \SH(S)_{\cell}^{\eff} \subset \SH(S)_{\cell}^{\eff}$. Since $\unit \in \SH(S)_{\cell}^{\eff} $, and this category is closed under extensions, the result follows. 
\end{proof}
\begin{cor}\label{cor:cellularquotients}
\begin{enumerate}
  \item   For any $0 \le n \le \infty$ the quotient $\MGL/(a_1,\ldots,a_n) \in \SH(S)^{\eff}_{\cell}$
  \item For any simplicial $\Z[1/c]$-module $A$ the motivic Eilenberg--Maclane spectrum $\mathbf{M}A \in \SH (S)^{\eff}_{\cell}$.  
\end{enumerate}

\end{cor}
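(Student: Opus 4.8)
The plan is to bootstrap everything from \Cref{thm:somglslice}, using only that $\SH(S)^{\eff}_{\cell}$, being a localizing subcategory of $\SH(S)_{\cell}$, is closed under arbitrary colimits and under cofibers of maps between its objects, and that it is preserved by the functor $\Sigma^{2i,i}\unit \otimes (-)$ for each $i \ge 1$. This last point is immediate: the functor commutes with colimits and is exact, and it sends a generator $\Sigma^{a,b}\unit$ of $\SH(S)^{\eff}_{\cell}$ (so $b \ge 0$) to $\Sigma^{a+2i,b+i}\unit$, which is again such a generator since $b+i \ge 0$; hence it carries the localizing subcategory generated by these generators into itself.

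For (1) I would induct on $n$. The base case $n = 0$ is \Cref{thm:somglslice}. For the inductive step, put $N = \MGL/(a_1,\dots,a_{n-1})$, which lies in $\SH(S)^{\eff}_{\cell}$ by hypothesis; by definition $\MGL/(a_1,\dots,a_n)$ is the cofiber of the multiplication map $a_n \colon \Sigma^{2n,n}N \to N$, and since $\Sigma^{2n,n}N \in \SH(S)^{\eff}_{\cell}$ by the observation above, so is that cofiber. For $n = \infty$ one has $\MGL/(a_1,a_2,\dots) \simeq \colim_n \MGL/(a_1,\dots,a_n)$, because the chain $\{1\} \subseteq \{1,2\} \subseteq \cdots$ is cofinal among the finite subsets of $\N$; this is a colimit of objects of $\SH(S)^{\eff}_{\cell}$, hence again lies in it.

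For (2) the key step is to show $\M\Z[1/c] \in \SH(S)^{\eff}_{\cell}$. By part (1) (case $n = \infty$) we have $\MGL/(a_1,a_2,\dots) \in \SH(S)^{\eff}_{\cell}$, hence so is its $c$-localization $\MGL/(a_1,a_2,\dots)[1/c] \simeq \colim\bigl(\MGL/(a_1,a_2,\dots) \xr{c} \MGL/(a_1,a_2,\dots) \xr{c} \cdots\bigr)$; by the Hopkins--Morel--Hoyois isomorphism \cite{hoyois_hhm} this spectrum is equivalent to $\M\Z[1/c]$. For an arbitrary simplicial $\Z[1/c]$-module $A$ I would then use that the motivic Eilenberg--MacLane functor $\M(-)$ commutes with colimits and sends $\Z[1/c]$ to $\M\Z[1/c]$: writing $A$ as the geometric realization of a simplicial object that is levelwise a free $\Z[1/c]$-module exhibits $\M A$ as a colimit of coproducts of copies of $\M\Z[1/c]$, and since $\SH(S)^{\eff}_{\cell}$ is localizing and contains $\M\Z[1/c]$, it follows that $\M A \in \SH(S)^{\eff}_{\cell}$.

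The only input here that is not formal is the Hopkins--Morel--Hoyois identification of $\MGL/(a_1,a_2,\dots)[1/c]$ with $\M\Z[1/c]$, and this is the step I would flag as the heart of the matter, since it is what transports effectivity-cellularity from $\MGL$ to the Eilenberg--MacLane spectra. It is tempting to argue (2) instead via $\M\Z[1/c] \simeq s_0(\unit[1/c])$ and \Cref{lem:ccellularslices}, but that route only shows $\M\Z[1/c]$ is cellular and effective, whereas we need the a priori stronger statement that it is cellular-effective; identifying these two notions for cellular spectra is precisely the comparison carried out in \Cref{thm:effcellcompare}, so the route through $\MGL$ is the self-contained one.
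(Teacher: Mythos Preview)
Your proof is correct and follows essentially the same route as the paper's: part (1) is deduced from \Cref{thm:somglslice} via closure of $\SH(S)^{\eff}_{\cell}$ under cofibers, $T$-suspensions, and filtered colimits, and part (2) uses the Hopkins--Morel--Hoyois equivalence $\MGL/(a_1,a_2,\ldots)[1/c] \simeq \M\Z[1/c]$ together with the fact that $\M A$ is built from copies of $\M\Z[1/c]$ via colimits (the paper cites \cite[Proposition 4.13]{hoyois_hhm} for this last step, which amounts to your free-resolution argument). Your write-up simply makes explicit the details the paper leaves as ``immediate'', and your closing remark about the alternative route via $s_0(\unit[1/c])$ is a nice observation.
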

\begin{proof}
(1) immediate in light of the previous theorem. The case $A = \Z[1/c]$ of (2) is then a consequence of the equivalence $\MGL/(a_1,a_2,\ldots)[1/c] \simeq \M\Z[1/c]$ \cite[Theorem 7.12]{hoyois_hhm}, while the general case follows from \cite[Proposition 4.13]{hoyois_hhm}.
\end{proof}
\begin{rem}\label{rem:cellularquotients}
  In fact, the proof of Spitzweck--{\O}stv\ae r shows that $\MGL$ lies in the smallest full subcategory closed under colimits and extensions generated by $\{ \Sigma^{a,b}\unit \mid a,b \ge 0 \}$, see the remark at the bottom of page 586 of \cite{mot_twisted}. We will study this category, denoted $\SH(S)^{\veff}_{\cell}$ in more detail in the next section. For now, we note that the same arguments show that (1) and (2) hold with $\SH (S)^{\veff}_{\cell}$ in place of $\SH (S)^{\eff}_{\cell}$. 
\end{rem}

Note that by \Cref{lem:ccellularslices} $f_q(\MGL[1/c])$ is cellular, so that we do not need to apply $\Cell$ in the following lemma. 
\begin{lem}\label{lem:slicemgl}
  Let $S$ be an essentially smooth scheme over a field of characteristic exponent $c$. Then, $\Cell \colon (\SH(S),\Sigma^q_T \SH(S)^{\eff}) \to (\SH(S)_{\cell},\Sigma^q_T \SH(S)^{\eff}_{\cell})$ is compatible with the slice filtration at $\MGL[1/c]$, i.e, there are equivalences
  \[ 
\begin{split} 
\xymatrix{\alpha_q \colon f_q(\MGL[1/c]) \ar[r]^-{\sim} & f_q^{\Cell}(\MGL[1/c])} \\
\xymatrix{\beta_q \colon s_q(\MGL[1/c]) \ar[r]^-{\sim} & s_q^{\Cell}(\MGL[1/c])} 
\end{split}
  \]
  for all $q \in \Z$. 
\end{lem}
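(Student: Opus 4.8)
The plan is to reduce the claim to the hypothesis of \Cref{prop:cell}, namely that $\Cell(f_q(\MGL[1/c])) \in \Sigma^q_T\SH(S)^{\eff}_{\cell}$ for all $q \in \Z$. By \Cref{lem:ccellularslices} the spectrum $\MGL[1/c]$ is $c$-cellular (being built from $\unit[1/c]$, since $\MGL$ is cellular by \cite{di_cell}), so its slices $s_q(\MGL[1/c])$ are already cellular; applying this iteratively up the tower, or invoking the cofiber sequences $f_{q+1}E \to f_qE \to s_qE$, one sees that each $f_q(\MGL[1/c])$ is cellular as well — so $\Cell(f_q(\MGL[1/c])) \simeq f_q(\MGL[1/c])$ and the real content is to show $f_q(\MGL[1/c]) \in \Sigma^q_T\SH(S)^{\eff}_{\cell}$, i.e.\ that it lies in the \emph{cellular} effective subcategory and not merely the full one.

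First I would handle the case $q \le 0$. By \Cref{thm:somglslice} (Spitzweck--{\O}stv{\ae}r), $\MGL \in \SH(S)^{\eff}_{\cell}$, hence so is $\MGL[1/c]$, and by \Cref{lem:colimdecomp} we have $\MGL[1/c] \simeq \colim_{p \le 0} f_p(\MGL[1/c])$. For $q \le 0$ one has $f_q(\MGL[1/c]) = \MGL[1/c]$ because $\MGL[1/c]$ is already effective (so $f_0$ is the identity on it, and $f_q$ for $q\le 0$ agrees with $f_0$ by the standard argument that $f_qf_p \simeq f_p$ for $p \le q$). Thus for $q \le 0$ the membership is immediate from \Cref{thm:somglslice}.

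For $q > 0$, the key is to use the explicit computation of the slices of the sphere from \eqref{eq:slicessphere}, combined with \Cref{lem:shifts}. By \Cref{lem:shifts} each wedge summand $\Sigma^{2q'-s,q'}\M\Z[1/c]$ of $s_{q'}(\unit[1/c])$ lies in $\Sigma^{q'}_T\SH(S)^{\eff}_{\cell}$ provided $\M\Z[1/c] \in \SH(S)^{\eff}_{\cell}$, which is \Cref{cor:cellularquotients}(2). Since $\Sigma^{q}_T\SH(S)^{\eff}_{\cell}$ is localizing (closed under colimits and extensions) and, by \ref{a2}, $\Sigma^{q'}_T\SH(S)^{\eff}_{\cell} \subseteq \Sigma^{q}_T\SH(S)^{\eff}_{\cell}$ whenever $q' \ge q$, it follows that $s_{q'}(\unit[1/c]) \in \Sigma^{q}_T\SH(S)^{\eff}_{\cell}$ for all $q' \ge q$. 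Now $f_q(\unit[1/c])$ is assembled from the slices $s_{q'}(\unit[1/c])$ with $q' \ge q$ via the tower $\{f_{q'}(\unit[1/c])\}_{q'\ge q}$ and the identification $f_q(\unit[1/c]) \simeq \lim$ — more precisely, one shows $f_q(\unit[1/c]) \in \Sigma^q_T\SH(S)^{\eff}_{\cell}$ by observing that $\Sigma^q_T\SH(S)^{\eff}$ and $\Sigma^q_T\SH(S)^{\eff}_{\cell}$ have the same $q$-effective covers for cellular input, or directly: $f_q(\unit[1/c])$ is cellular by \Cref{lem:ccellularslices} and lies in $\SH(S)^{\eff}_{\cell}$, and its own slices are the $s_{q'}(\unit[1/c])$ for $q' \ge q$, all in $\Sigma^q_T\SH(S)^{\eff}_{\cell}$, so $f_q(\unit[1/c]) = f_q^{\Cell}(f_q(\unit[1/c])) \in \Sigma^q_T\SH(S)^{\eff}_{\cell}$. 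Finally, since $\MGL[1/c]$ is built under colimits and extensions from $\unit[1/c]$ and $f_q$ is not itself exact but \emph{does} commute with the filtered colimit defining $\MGL[1/c]$ out of finite cell complexes (\Cref{lem:colimsq}), one concludes $f_q(\MGL[1/c]) \in \Sigma^q_T\SH(S)^{\eff}_{\cell}$ by a localizing-subcategory argument, and then \Cref{prop:cell} applies.

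**The main obstacle** I anticipate is the last step: passing from the sphere $\unit[1/c]$, where the slices are known explicitly, to $\MGL[1/c]$. One cannot directly argue "$f_q$ of a cellular spectrum is in $\Sigma^q_T\SH(S)^{\eff}_{\cell}$" by naive induction over cells, because $f_q$ is not exact (it does not preserve cofiber sequences). The way around this is to work not with $f_q$ but with the $t$-structure pair $(\Sigma^q_T\SH(S)^{\eff}_{\cell}, (\Sigma^q_T\SH(S)^{\eff}_{\cell})^{\perp})$ and exploit that $f_q(\MGL[1/c])$ is already known to be cellular and effective (by \Cref{lem:ccellularslices} and \Cref{thm:somglslice}); then the point is purely that a cellular, effective spectrum whose every slice lies in $\Sigma^q_T\SH(S)^{\eff}_{\cell}$ actually has $f_q$ landing in $\Sigma^q_T\SH(S)^{\eff}_{\cell}$. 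Equivalently — and this is probably the cleanest route — one invokes \Cref{exmp:multslices}: the cellular effective filtration $\{\Sigma^q_T\SH(S)^{\eff}_{\cell}\}$ sits inside the restriction to $\SH(S)_{\cell}$ of $\{\Sigma^q_T\SH(S)^{\eff}\}$, and one needs $f_q^{\SH_{\cell}}(\Cell \MGL[1/c])$ to land in the smaller category, which follows once the slices do, because the slices generate the difference. I would spend most of the write-up making this localizing-subcategory/slice-generation argument precise.
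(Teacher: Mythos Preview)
Your reduction via \Cref{prop:cell} to showing $f_q(\MGL[1/c]) \in \Sigma^q_T\SH(S)^{\eff}_{\cell}$ is exactly right, and matches the paper. But the route you take from there has a genuine gap, and you have essentially already identified it yourself in your ``main obstacle'' paragraph.

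The problem is twofold. First, even for the sphere, the step ``$f_q(\unit[1/c])$ is cellular and its slices $s_{q'}(\unit[1/c])$ for $q' \ge q$ all lie in $\Sigma^q_T\SH(S)^{\eff}_{\cell}$, hence $f_q(\unit[1/c]) \in \Sigma^q_T\SH(S)^{\eff}_{\cell}$'' is not justified: $\Sigma^q_T\SH(S)^{\eff}_{\cell}$ is closed under colimits and extensions, but $f_q$ is not a colimit of its slices --- the slice tower $\cdots \to f_{q+1} \to f_q$ does not terminate upward, so there is no finite induction, and the tower expresses $f_q$ as an \emph{inverse} limit of extensions by slices, which is the wrong direction. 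Second, even granting the claim for $\unit[1/c]$, the passage to $\MGL[1/c]$ via a localizing-subcategory argument fails for exactly the reason you note: $f_q$ is not exact, so membership of $f_q(-)$ in $\Sigma^q_T\SH(S)^{\eff}_{\cell}$ is not preserved under cofibers. Your proposed workaround (``the slices generate the difference'') is not made precise, and I do not see how to make it so without further input.

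The paper sidesteps all of this by citing a structural result you do not use: by \cite[Theorem 7.12]{hoyois_hhm} and the proof of \cite[Theorem 4.7]{spitzweck_slices}, the spectrum $f_q\MGL[1/c]$ is itself a colimit of a diagram of $\MGL[1/c]$-modules of the form $\Sigma^{2k,k}\MGL[1/c]$ with $k \ge q$. Since $\MGL[1/c] \in \SH(S)^{\eff}_{\cell}$ by \Cref{thm:somglslice}, each $\Sigma^{2k,k}\MGL[1/c]$ lies in $\Sigma^k_T\SH(S)^{\eff}_{\cell} \subseteq \Sigma^q_T\SH(S)^{\eff}_{\cell}$, and the conclusion follows because this category is closed under colimits. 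This is a one-line argument once you know the structural input, and it avoids the sphere entirely. The moral is that the explicit description of $f_q\MGL[1/c]$ is the key lemma here, not the slice formula for the sphere.
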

\begin{proof}\sloppy
  By \Cref{prop:cell} it is enough to show that $\Cell(f_q\MGL[1/c]) \in \Sigma^q_T\SH(S)^{\eff}_{\cell}$.  By \cite[Theorem 7.12]{hoyois_hhm} and the proof of \cite[Theorem 4.7]{spitzweck_slices}, $f_q\MGL[1/c]$ is the colimit of a diagram of $\MGL[1/c]$-modules of the form $\Sigma^{2k,k}\MGL[1/c]$ where $k \ge q$.\footnote{As noted in \cite[Theorem 8.5]{hoyois_hhm} one can remove Spitzweck's assumption that the field is perfect. Indeed, for an essential smooth morphism $f \colon T \to S$ of schemes, \cite[Theorem 2.12]{pelaez_functor} shows that $f^*f^S_q(\MGL_S[1/c]) \simeq f^T_qf^*(\MGL_S[1/c]) \cong f_q^T(\MGL_T[1/c])$.}  Since $\Cell(\Sigma^{2k,k}\MGL[1/c]) \simeq \Sigma^{2k,k}\MGL[1/c] \in \Sigma_T^k\SH_{\cell}^{\eff} \subseteq \Sigma^q_T\SH^{\eff}_{\cell}$ by \Cref{thm:somglslice}, the result follows because $\Sigma^q_T \SH(S)_{\cell}^{\eff}$ is closed under colimits. 
\end{proof}
Using this result we can now prove compatibility of the slice filtration with $\Cell$ in the case that $E$ is Landweber exact in the sense of \cite{nso_landweber}. Note that these spectra are always cellular by \cite[Proposition 8.4]{nso_landweber}. We will write $LB = L[b_1,b_2,\ldots]$, so that the Hopf algebroid $(L,LB)$ is the Hopf algebroid classifying formal group laws and strict isomorphisms. 
\begin{lem}\label{lem:slicele}
  Let $M_*$ be a Landweber exact $L[1/c]$-module, and $E \in \SH(S)$ the associated motivic spectrum. Then, $\Cell \colon (\SH(S),\Sigma^q_T \SH(S)^{\eff}) \to (\SH(S)_{\cell},\Sigma^q_T \SH(S)^{\eff}_{\cell})$ is compatible with the slice tower at $E$, i.e., there are equivalences
  \[  
   \xymatrix{\alpha_q \colon f_q(E) \ar[r]^-{\sim} &f_q^{\Cell}(E) }
\]
and
\[
   \xymatrix{\beta_q \colon s_q(E) \ar@<0.5ex>[r]^-{\sim} & s_q^{\Cell}(E)}  
  \]
  for all $q \in \Z$. 
\end{lem}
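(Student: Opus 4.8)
The plan is to follow the pattern of \Cref{lem:slicemgl}, reducing via \Cref{prop:cell} to a single membership statement. By \Cref{prop:cell} it suffices to prove that $\Cell(f_q E)\in\Sigma^q_T\SH(S)^{\eff}_{\cell}$ for all $q\in\Z$; granting this, \Cref{prop:cell} supplies the asserted equivalences $\alpha_q$ and $\beta_q$. As in \Cref{lem:slicemgl} we will in fact see that $f_q E$ is already cellular, so that applying $\Cell$ is unnecessary.

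The key input, which we draw from \cite{nso_landweber}, is that $E$ lies in the localizing subcategory $\Loc_{\SH(S)}(\{\Sigma^{2j,j}\MGL[1/c]\mid j\in\Z\})$ of $\SH(S)$ generated by the \emph{even} shifts of $\MGL[1/c]$. Indeed $E$ is an $\MGL[1/c]$-module that can be built from such even shifts: one chooses a resolution of the evenly graded $L[1/c]$-module $M_*$ by free $L[1/c]$-modules on generators in even degrees and realizes it over $\MGL[1/c]$, and Landweber exactness of $M_*$ is precisely what guarantees that the realization has homotopy $\MGL[1/c]_{*,*}\otimes_{L[1/c]}M_*$ (the higher Tor contributions to the universal coefficient spectral sequence vanish), so that it agrees with $E$. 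This is implicit in the proof of \cite[Proposition 8.4]{nso_landweber}; the Hopf algebroid $(L,LB)$ introduced above is the natural bookkeeping device for the vanishing of those Tor terms.

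Granting this, the argument is formal. First, $\Sigma^q_T\SH(S)^{\eff}_{\cell}$ is a localizing subcategory of $\SH(S)$ (and not merely of $\SH(S)_{\cell}$), because $\SH(S)_{\cell}$ is itself localizing in $\SH(S)$. Next, $\Sigma^q_T\SH(S)^{\eff}$ is a stable subcategory of $\SH(S)$ — it is closed under $\Sigma^{\pm1,0}$, as one checks on the generators $\cK_q$, hence under fibers and cofibers — so, recalling from the discussion around \Cref{lem:colim} and \Cref{lem:colimsq} that this holds when $\cC_q$ is stable, the functor $f_q$ is exact and commutes with all colimits. It follows that $\{X\in\SH(S)\mid f_q X\in\Sigma^q_T\SH(S)^{\eff}_{\cell}\}$ is a localizing subcategory of $\SH(S)$. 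It contains each generator $\Sigma^{2j,j}\MGL[1/c]$: by \Cref{lem:shifts} we have $f_q(\Sigma^{2j,j}\MGL[1/c])\simeq\Sigma^{2j,j}f_{q-j}(\MGL[1/c])$, which by \Cref{lem:slicemgl} is equivalent to $\Sigma^{2j,j}f_{q-j}^{\Cell}(\MGL[1/c])\simeq f_q^{\Cell}(\Sigma^{2j,j}\MGL[1/c])$ (the last equivalence by \Cref{lem:shifts} for the cellular effective filtration), and this lies in $\Sigma^q_T\SH(S)^{\eff}_{\cell}$ by definition of $f_q^{\Cell}$. Since $E$ lies in the localizing subcategory these objects generate, $f_q E\in\Sigma^q_T\SH(S)^{\eff}_{\cell}$; in particular $f_q E$ is cellular, so $\Cell(f_q E)\simeq f_q E\in\Sigma^q_T\SH(S)^{\eff}_{\cell}$, and \Cref{prop:cell} completes the proof. (As in \Cref{lem:slicemgl}, passing from a field to an essentially smooth base $S$ is handled by the base change compatibility of slices.)

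The step I expect to be the main obstacle is the structural input of the second paragraph: pinning down that the Landweber exact spectrum really is the realization of a free $L[1/c]$-resolution of $M_*$ over $\MGL[1/c]$, so that it visibly lies in $\Loc_{\SH(S)}(\{\Sigma^{2j,j}\MGL[1/c]\})$, together with the bookkeeping (via the Hopf algebroid $(L,LB)$) showing that Landweber exactness kills the obstructing Tor terms. Everything after that is manipulation of localizing subcategories using only \Cref{lem:slicemgl} and \Cref{lem:shifts}.
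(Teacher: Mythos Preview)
Your overall framework---reduce via \Cref{prop:cell} to $f_qE\in\Sigma^q_T\SH(S)^{\eff}_{\cell}$, verify this on shifts of $\MGL[1/c]$ using \Cref{lem:slicemgl} and \Cref{lem:shifts}, and propagate via closure under colimits---is sound and close in spirit to the paper's argument. The paper, however, handles your ``key input'' differently (following \cite[Lemma~8.11]{hoyois_hhm}): when $M_*$ is \emph{flat} over $L[1/c]$, Lazard's theorem writes it as a filtered colimit of finite free modules, so $E$ is a filtered colimit of finite wedges of shifts of $\MGL[1/c]$, and one concludes using only that $f_q$ preserves \emph{filtered} colimits. For general $M_*$ one observes that $E$ is a retract of $\MGL[1/c]\otimes E$, which is the Landweber spectrum for the \emph{flat} module $LB[1/c]\otimes_{L[1/c]}M_*$, reducing to the previous case. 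This two-step argument incidentally establishes your key input as a corollary, since localizing subcategories are closed under retracts.

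The weak point in your version is the justification you give for that key input. You say Landweber exactness is ``precisely what guarantees'' that the realized free resolution has the correct homotopy because ``the higher Tor contributions vanish'', but Landweber exactness does not assert $\mathrm{Tor}^{L[1/c]}_{>0}(M_*,-)=0$ in general---only when the second argument is an $LB$-comodule. Even granting the right Tor statement, identifying your realized resolution with $E$ (and not merely with something representing the same homology theory) needs a further uniqueness/representability argument you have not supplied. None of this is fatal, but it is precisely the work the flat-plus-retract trick avoids. A minor point: your restriction to \emph{even} shifts $\Sigma^{2j,j}$ is unnecessary and makes the key input harder to justify---\Cref{lem:shifts} handles arbitrary $\Sigma^{a,b}$, so the localizing subcategory generated by all bigraded shifts of $\MGL[1/c]$ would suffice.
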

\begin{proof}
  This is essentially the argument given in \cite[Lemma 8.11]{hoyois_hhm}. If $M_*$ is flat, then it is a filtered colimit of finite sums of shifts of $L[1/c]$ by Lazard's theorem, and the corresponding spectrum $E$ is a filtered colimit of the corresponding diagram of $\MGL[1/c]$-modules. Since $f_q$ commutes with filtered colimits by \Cref{lem:colimsq}, the previous lemma implies the result holds for such $E$ (using, say, \Cref{lem:shifts} to handle the suspensions). For the general case, it suffices to show that $\Cell f_q(\MGL[1/c] \otimes E) \in \Sigma^{q}_T\SH(S)^{\eff}_{\cell}$, since $E$ is a retract of $\MGL[1/c] \otimes E$. But $\MGL[1/c] \otimes E$ is the spectrum associated to the flat Landweber exact $L[1/c]$-module $LB[1/c] \otimes_{L[1/c]} M_*$. 
\end{proof}

We now give a full computation of the cellular slices of the motivic sphere spectrum. This is done using precisely the techniques of Voevodsky, Levine, and others, in computing $s_q(\unit)$.  We again let $E_2^{s,q}(\MU)$ denote the $E_2$ term of the Adams--Novikov spectral sequence, i.e., $E_2^{s,q}(\MU) \cong \Ext^{s,q}_{\MU_*\MU}(\MU_*,\MU_*)$. 
\begin{thm}\label{thm:cellslicesunit}
   Let $S$ be an essentially smooth scheme over a field of characteristic exponent $c$. Then, there is an equivalence
  \[
  s_q^{\Cell}(\unit[1/c]) \simeq s_q(\unit[1/c]) \simeq \bigvee_{s \ge 0} \Sigma^{2q-s,q}\M\Z[1/c] \otimes E_2^{s,2q}(\MU). 
  \]
\begin{proof}
  We will assume that $c = 1$ for legibility. Consider the diagram in $\SH(S)_{\cell}$
\[
\setlength\mathsurround{0pt}
\begin{tikzcd}
  \unit \arrow{r} & \MGL \arrow[shift right=0.5ex]{r} \arrow[shift left=0.5ex]{r} & \MGL^{\otimes 2}  \arrow[shift right=1ex]{r} \arrow[shift left=1ex]{r} \arrow{r} & \cdots
\end{tikzcd}
\]
which gives rise to a morphism
\begin{equation}\label{eq:totalization}
\setlength\mathsurround{0pt}
\begin{tikzcd}
   s_q^{\Cell}(\unit) \arrow{r}  &\underset{\Delta}{\widetilde \lim} \,s_q^{\Cell}(\MGL^{\otimes \bullet}) 
\end{tikzcd}
\end{equation}
Here we have used the notation ${\widetilde \lim}$ for the limit in $\SH_{\cell}$. It is easy to see that ${\widetilde \lim}(-) \simeq \Cell {\lim}(-)$, by checking that the latter satisfies the universal property of the limit in $\SH_{\cell}$. 

We claim that \eqref{eq:totalization} is an equivalence. The proof of this is the same as \cite[Proposition 2.9]{rso_slices}, and so we simply sketch it for the reader. In fact, more generally, it is true that 
\[
s^{\Cell}_q(\overline \MGL^{\otimes m}) \xr{\simeq} \underset{\Delta}{\widetilde \lim} s_q^{\Cell}(\overline{\MGL}^{\otimes m} \otimes \MGL^{\otimes(\bullet + 1)}),
\]
with the claimed equivalence being the case $m = 0$. 

For $m \ge q+1$ the claim follows because both sides are trivial, since $\overline \MGL \in \Sigma^{1}_T \SH^{\eff}_{\cell}$. 
We now induct downwards on $m$ as in \emph{loc.~cit.}, using the commutative diagram of cofiber sequences
\[
\setlength\mathsurround{0pt}
\begin{tikzcd}
  s_q^{\Cell}(\overline{\MGL}^{\otimes(m+1)}) \arrow{d} \arrow{r} & \Cell \displaystyle\lim_{\Delta} s_q^{\Cell}(\overline{\MGL}^{\otimes (m+1)} \arrow{d} \otimes \MGL^{\otimes(\bullet + 1)}) \\
  s_q^{\Cell}(\overline{\MGL}^{\otimes m}) \arrow{d} \arrow{r} & \Cell \displaystyle\lim_{\Delta} s_q^{\Cell}(\overline{\MGL}^{\otimes m} \otimes \MGL^{\otimes(\bullet + 1)}) \arrow{d}\\
  s_q^{\Cell}(\overline{\MGL}^{\otimes m} \otimes \MGL) \arrow{r} & \Cell \displaystyle\lim_{\Delta} s_q^{\Cell}(\overline{\MGL}^{\otimes m} \otimes \MGL \otimes \MGL^{\otimes(\bullet + 1)})
\end{tikzcd}
\]
Here the top horizontal arrow is inductively an equivalence, and the bottom is via \cite[Lemma 2.10]{rso_slices}. It follows that the middle horizontal arrow is also an equivalence, and it follows that \eqref{eq:totalization} is an equivalence as claimed. 

Since $\MGL^{\otimes \bullet}$ is Landweber exact, we have $s_q^{\Cell}(\MGL^{\otimes \bullet}) \simeq s_q(\MGL^{\otimes \bullet})$ by \Cref{lem:slicele}, and by naturality this is compatible with the maps in the totalization. It follows that 
\[
s_q^{\Cell}(\unit) \simeq \Cell \lim_{\Delta} s_q^{\Cell}(\MGL^{\otimes \bullet}) \simeq \Cell \lim_{\Delta} s_q(\MGL^{\otimes \bullet}) \simeq \Cell s_q(\unit),
\]
where the last equivalence follows from the computation of the slices of the sphere spectrum, cf \cite[Proposition 2.9]{rso_slices}. By \Cref{lem:ccellularslices}, $\Cell s_q(\unit) \simeq s_q(\unit)$, and so we conclude that $s_q^{\Cell}(\unit) \simeq s_q(\unit)$.
\end{proof}

\end{thm}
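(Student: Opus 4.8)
The plan is to transport the computation of the ordinary slices $s_q(\unit)$ due to Voevodsky, Levine, and R\"ondigs--Spitzweck--{\O}stv\ae r into the cellular category. For legibility I would reduce to the case $c=1$, the general case being identical after adorning everything with $[1/c]$ and using that $c$-cellular objects are cellular. The backbone of the argument is the Amitsur (cobar) complex
\[
\unit \to \MGL \to \MGL^{\otimes 2} \to \cdots
\]
in $\SH(S)_{\cell}$ obtained by iterating the unit $\unit \to \MGL$. Applying $s_q^{\Cell}$ produces a comparison morphism
\[
s_q^{\Cell}(\unit) \longrightarrow \underset{\Delta}{\widetilde{\lim}}\, s_q^{\Cell}(\MGL^{\otimes\bullet}),
\]
where $\widetilde{\lim}$ denotes the limit formed inside $\SH(S)_{\cell}$; I would first record that this agrees with $\Cell$ applied to the limit formed in $\SH(S)$, by checking the universal property. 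The two things to establish are then: (i) the comparison morphism is an equivalence, and (ii) its target is identified with $\Cell\, s_q(\unit)$, which in turn equals $s_q(\unit)$.

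For (i), I would prove the more general slice-wise convergence statement that
\[
s_q^{\Cell}(\overline{\MGL}^{\otimes m}) \xrightarrow{\ \sim\ } \underset{\Delta}{\widetilde{\lim}}\, s_q^{\Cell}\bigl(\overline{\MGL}^{\otimes m} \otimes \MGL^{\otimes(\bullet+1)}\bigr)
\]
for all $m \ge 0$, where $\overline{\MGL}$ is the cofiber of $\unit \to \MGL$; the case $m=0$ is the claim. The proof of \Cref{thm:somglslice} shows $\overline{\MGL} \in \Sigma^1_T\SH(S)^{\eff}_{\cell}$, and since the cellular effective slice filtration is multiplicative on generators one gets $\overline{\MGL}^{\otimes m} \in \Sigma^m_T\SH(S)^{\eff}_{\cell}$, so that for $m \ge q+1$ both sides vanish and the statement is trivial. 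One then inducts downward in $m$, splicing the cofiber sequence $\overline{\MGL}^{\otimes(m+1)} \to \overline{\MGL}^{\otimes m} \to \overline{\MGL}^{\otimes m}\otimes\MGL$ into a $3\times 3$ diagram of cofiber sequences: the top row is an equivalence by induction, the bottom row by the motivic analogue of \cite[Lemma 2.10]{rso_slices}, whence the middle row is an equivalence. This is precisely the argument of \cite[Proposition 2.9]{rso_slices} carried out in $\SH(S)_{\cell}$.

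For (ii), each $\MGL^{\otimes\bullet}$ is Landweber exact over $\MGL$, so \Cref{lem:slicele} gives that $s_q^{\Cell}(\MGL^{\otimes\bullet}) \to s_q(\MGL^{\otimes\bullet})$ is an equivalence, compatibly with the cosimplicial structure by naturality. Hence
\[
s_q^{\Cell}(\unit) \simeq \Cell\lim_\Delta s_q^{\Cell}(\MGL^{\otimes\bullet}) \simeq \Cell\lim_\Delta s_q(\MGL^{\otimes\bullet}) \simeq \Cell\, s_q(\unit),
\]
the last step being the ordinary (non-cellular) version of the same totalization, i.e.\ the computation of the slices of the motivic sphere from \cite[Proposition 2.9]{rso_slices}. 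By \Cref{lem:ccellularslices} the spectrum $s_q(\unit)$ is already cellular, so $\Cell\, s_q(\unit) \simeq s_q(\unit)$, and its explicit form $\bigvee_{s\ge0}\Sigma^{2q-s,q}\M\Z\otimes E_2^{s,2q}(\MU)$ is \eqref{eq:slicessphere}.

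The main obstacle is step (i): proving that the Amitsur complex for $\MGL$ is slice-wise convergent in the cellular category, which is where the downward induction and the $3\times 3$ diagram do the real work. Everything else is bookkeeping once that equivalence is in hand and one knows cellular and ordinary slices coincide on the Landweber-exact terms $\MGL^{\otimes\bullet}$ (\Cref{lem:slicele}). One should also be mildly careful that $\Cell$ interacts correctly with the relevant limits --- the observation $\widetilde{\lim} \simeq \Cell\lim$ --- and that $s_q^{\Cell}$, defined via a localizing subcategory generated by compact objects, commutes with the filtered colimits implicit in the Landweber-exactness arguments, which is \Cref{lem:colimsq}.
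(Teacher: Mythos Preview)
Your proposal is correct and follows essentially the same approach as the paper: both run the Amitsur resolution for $\MGL$ in the cellular category, prove slice-wise convergence by the downward induction on $m$ using $\overline{\MGL}\in\Sigma^1_T\SH(S)^{\eff}_{\cell}$ and the $3\times 3$ diagram (citing \cite[Lemma 2.10]{rso_slices} for the bottom row), identify the totalization with $\Cell\,s_q(\unit)$ via \Cref{lem:slicele}, and then invoke \Cref{lem:ccellularslices} to drop the $\Cell$. Your write-up even anticipates the same technical caveats ($\widetilde{\lim}\simeq\Cell\lim$ and the use of \Cref{lem:colimsq}) that the paper relies on implicitly.
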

We are now in a position to prove the main result of this section. 
\begin{thm}\label{thm:effcellcompare}
   Let $S$ be an essentially smooth scheme over a field of characteristic exponent $c$, then $\iota \colon (\SH(S)_{\cell},\Sigma^q_T\SH(S)^{\eff}_{\cell}) \to (\SH(S),\Sigma^q_T\SH(S)^{\eff})$ is compatible with the slice filtration at a $c$-cellular motivic spectrum $E$, i.e, there are equivalences 
  \[  
  \begin{split}
    \xymatrix{ \alpha_q \colon \iota f^{\Cell}_q(E) \ar[r]^-{\sim} & f_q(\iota E)}\\
    \xymatrix{ \beta_q \colon  \iota s^{\Cell}_q(E) \ar[r]^-{\sim} & s_q(\iota E)} 
  \end{split}
  \]
  for all $q \in \Z$. 
\end{thm}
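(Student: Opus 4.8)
The plan is to apply the abstract form of Pelaez's theorem, \Cref{thm:pelaez}, to the exact functor $\iota\colon\SH(S)_{\cell}\to\SH(S)$ at a $c$-cellular spectrum $E$, so that it suffices to verify \hyperref[cond1]{Condition (1)}, \hyperref[cond2]{Condition (2)} and \hyperref[cond3]{Condition (3)}. Condition (1) is automatic, since $\iota$ is the left adjoint of $\Cell$ and hence preserves all colimits. Condition (2) is also immediate: each cellular generator $\Sigma^{a,b}\unit$ with $b\ge q$ is one of the generators $\Sigma^{a,b}\Sigma^{\infty}X_{+}$ of $\Sigma^{q}_{T}\SH(S)^{\eff}$, so $\Sigma^{q}_{T}\SH(S)^{\eff}_{\cell}\subseteq\Sigma^{q}_{T}\SH(S)^{\eff}$ and therefore $\iota(f^{\Cell}_{q}E)\in\Sigma^{q}_{T}\SH(S)^{\eff}$. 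Everything thus reduces to Condition (3): $\iota(s^{\Cell}_{q}E)\in\bigl(\Sigma^{q+1}_{T}\SH(S)^{\eff}\bigr)^{\perp}$ for all $q\in\Z$.

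For Condition (3) I would first record that $s^{\Cell}_{q}E$ lies in $\Sigma^{q}_{T}\SH(S)^{\eff}_{\cell}\cap\bigl(\Sigma^{q+1}_{T}\SH(S)^{\eff}_{\cell}\bigr)^{\perp}$ (the first membership because $\Sigma^{q}_{T}\SH(S)^{\eff}_{\cell}$, being localizing, is closed under cofibers, and the second by \Cref{rem:fibcofiberslice}), and that the subcategory $\bigl(\Sigma^{q+1}_{T}\SH(S)^{\eff}\bigr)^{\perp}$ is stable, closed under all colimits (since $f_{q+1}$ commutes with filtered colimits by \Cref{lem:colimsq} and $\perp$-subcategories are closed under extensions and suspension) and under all limits, hence also under retracts. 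Twisting by $\Sigma^{0,-q}$, using \Cref{lem:shifts} and the fact that $\Sigma^{0,q}$ carries $\bigl(\Sigma^{1}_{T}\SH(S)^{\eff}\bigr)^{\perp}$ into $\bigl(\Sigma^{q+1}_{T}\SH(S)^{\eff}\bigr)^{\perp}$ by \Cref{lem:invertiblecomm}, it suffices to show: a cellular spectrum $Z$ lying in the heart $\SH(S)^{\eff}_{\cell}\cap\bigl(\Sigma^{1}_{T}\SH(S)^{\eff}_{\cell}\bigr)^{\perp}$ of the cellular effective slice filtration satisfies $\iota Z\in\bigl(\Sigma^{1}_{T}\SH(S)^{\eff}\bigr)^{\perp}$.

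The strategy for this is to show that such a $Z$ is built, via colimits, limits and extensions, out of motivic Eilenberg--MacLane spectra $\M A[1/c]$ and their simplicial shifts $\Sigma^{a,0}\M A[1/c]$, and then to note that each of these already lies in $\bigl(\Sigma^{1}_{T}\SH(S)^{\eff}\bigr)^{\perp}$: indeed $\Hom_{\SH(S)}(\Sigma^{c,b}\Sigma^{\infty}X_{+},\Sigma^{a,0}\M A[1/c])$ for $b\ge 1$ and $X\in\Sm/S$ computes a group of $A[1/c]$-motivic cohomology of $X$ in the negative weight $-b$, which vanishes, so the orthogonality follows since $\bigl(\Sigma^{1}_{T}\SH(S)^{\eff}\bigr)^{\perp}$ is closed under the relevant operations. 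The reduction to such building blocks is where the work lies: using that $s^{\Cell}_{q}$ and $\iota$ commute with filtered colimits (\Cref{lem:colimsq}) together with \Cref{lem:colimdecomp}, one reduces to $c$-cellular spectra bounded below in weight, and then by a weight twist to effective ones; for an effective $c$-cellular spectrum one runs the $\MGL[1/c]$-cobar resolution exactly as in the proof of \Cref{thm:cellslicesunit} (note $\overline{\MGL}[1/c]\in\Sigma^{1}_{T}\SH(S)^{\eff}_{\cell}$, so by \ref{a6} one has $E\otimes\overline{\MGL}[1/c]^{\otimes m}\in\Sigma^{m}_{T}\SH(S)^{\eff}_{\cell}$ and the totalization is effectively a finite limit of cellular spectra), reducing to the terms $E\otimes\MGL[1/c]^{\otimes(\bullet+1)}$. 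Each such term is a cellular $\MGL[1/c]$-module, and here \Cref{lem:slicele}, applied to the free modules $\Sigma^{a,b}\MGL[1/c]$ (shifts of the Landweber exact spectrum $\MGL[1/c]$), together with the base-change compatibility following \Cref{lem:modules} and the known structure of the slices of $\MGL[1/c]$-modules, identifies $\iota s^{\Cell}_{q}$ of such a module with its $q$-th slice in $\SH(S)$, a wedge of shifted motivic Eilenberg--MacLane spectra. This gives Condition (3), and \Cref{thm:pelaez} then yields the equivalences $\alpha_{q}$ and $\beta_{q}$.

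The main obstacle is exactly Condition (3) --- concretely, the assertion that the cellular effective subcategory coincides with $\SH(S)^{\eff}\cap\SH(S)_{\cell}$, equivalently that a cellular spectrum whose bigraded homotopy groups vanish in positive weights is co-connected for Voevodsky's full slice filtration. This is not formal, because the cellular effective subcategory is generated only by the bigraded spheres, whereas Voevodsky's is generated by all $\Sigma^{a,b}\Sigma^{\infty}X_{+}$, so orthogonality against the former does not a priori imply orthogonality against the latter; bridging this gap is what forces one through the $\MGL[1/c]$-resolution, the Landweber computations, and the vanishing of motivic cohomology in negative weights. (An alternative organization would first show, via \Cref{lem:ccellularslices} and convergence of the slice tower, that $f_{q}(\iota E)$ is cellular, and then invoke a \Cref{prop:cell}-type argument; but this meets the same identification of $\SH(S)^{\eff}_{\cell}$ with $\SH(S)^{\eff}\cap\SH(S)_{\cell}$.) Everything else --- Conditions (1) and (2), the reductions by filtered colimits, retracts and weight twists, and the book-keeping for the cobar resolution --- is routine.
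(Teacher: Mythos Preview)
Your overall strategy matches the paper's: apply \Cref{thm:pelaez}, note that Conditions (1) and (2) are immediate, and reduce everything to Condition (3). Your identification of Condition (3) as the crux is correct. However, your verification of it is more circuitous than necessary, and one step is not fully justified.

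The paper's route is much shorter. Because both $\Sigma^q_T\SH(S)^{\eff}$ and $\Sigma^q_T\SH(S)^{\eff}_{\cell}$ are \emph{localizing} subcategories (not merely closed under colimits and extensions), the functors $f_q$, $s_q$, $f_q^{\Cell}$, $s_q^{\Cell}$ commute with \emph{all} colimits (see the remark after \Cref{lem:colim}), as does $\iota$. Hence the collection of $E$ for which $\alpha_q(E)$ and $\beta_q(E)$ are equivalences is itself a localizing subcategory of $\SH(S)_{\cell}$, and one may immediately reduce to $E=\unit[1/c]$ after a shift via \Cref{lem:shifts}. For that single spectrum, Condition (3) is precisely \Cref{thm:cellslicesunit}: one has $s_q^{\Cell}(\unit[1/c])\simeq s_q(\unit[1/c])$, and the right-hand side lies in $(\Sigma^{q+1}_T\SH(S)^{\eff})^{\perp}$ by construction. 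That is the entire proof.

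Your argument instead runs the $\MGL[1/c]$-cobar resolution for a \emph{general} effective $c$-cellular $E$ and must then treat the terms $E\otimes\MGL[1/c]^{\otimes(\bullet+1)}$. At that point you appeal to ``\Cref{lem:slicele} applied to the free modules \ldots\ together with base-change compatibility and the known structure of the slices of $\MGL[1/c]$-modules''. This is where the gap lies: \Cref{lem:slicele} applies only to Landweber exact spectra, and $E\otimes\MGL[1/c]^{\otimes k}$ is not Landweber exact for arbitrary $c$-cellular $E$; nor do \Cref{lem:modules} and \Cref{lem:slicele} combine to give what you claim without further argument. The natural way to pass from the free modules $\Sigma^{a,b}\MGL[1/c]$ to a general cellular $\MGL[1/c]$-module is again a localizing-subcategory argument (such a module lies in the localizing subcategory generated by the free ones), and once you allow yourself that move the entire cobar detour is superfluous: the same reasoning applied directly to the generators $\Sigma^{a,b}\unit[1/c]$ reduces you to $\unit[1/c]$ from the outset and lets you cite \Cref{thm:cellslicesunit} once.
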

\begin{proof}
We want to apply \Cref{thm:pelaez}. It is clear that \hyperref[cond1]{Condition (1)} and \hyperref[cond2]{Condition (2)} hold, so that such maps $\alpha_q$ and $\beta_q$ as in the statement of the theorem do exist. A localizing subcategory argument then reduces to checking the statement on $\Sigma^{p,q}\unit[1/c]$, and as usual by shifting, we can reduce to checking it on $\unit[1/c]$ itself. It thus suffices to see that $s_q^{\Cell}(\unit[1/c]) \in( \Sigma^{q+1}_T\SH(S))^{\perp}$.   But \Cref{thm:cellslicesunit} shows that $s_q^{\Cell}(\unit[1/c]) \simeq s_q(\unit[1/c])$, and by construction the latter is always in $(\Sigma^{q+1}_T\SH(S))^{\perp}$. 
  \end{proof}
\begin{rem}
  We do not know if the stronger condition that $\Cell \colon (\SH(S),\Sigma^q_T\SH(S)^{\eff}) \to (\SH(S)_{\cell},\Sigma^q_T\SH(S)^{\eff}_{\cell})$ is compatible with the slice filtration at a motivic spectrum $E$ holds in general. The work of this section shows that it holds whenever $E$ is $c$-cellular. 
\end{rem}
\section{The very effective cellular slice filtration}\label{sec:veff}
The effective slice filtration considered previously constructs slices by filtering with respect to the Tate sphere $\G_m \simeq S^{1,1}$. An alternative was introduced in \cite{mot_twisted} by Spitzweck--{\O}stv{\ae}r which filters with respect to $\mathbb{P}^1 \simeq S^{2,1}$. To be precise, consider the collection of objects
\[\cK_q = \{ \Sigma^{2a,a}\Sigma^\infty_+X \mid  X \in \Sm/S, a \ge q\} \subseteq \SH(S).\] 
Define $\Sigma^{q}_T \SH(S)_{\cell}^{\eff}$ to the the smallest full subcategory of $\SH(S)_{\cell}$ generated under colimits and extensions by $\cal{K}_q$. The full subcategories $\{ \Sigma^q_T \SH(S)_{\cell}^{\eff} \}_{q \in \Z}$ define a slice filtration of $\SH(S)_{\cell}$. We denote the associated slice functors by $\widetilde{f}_q$ and $\widetilde{s}_q$.  The very effective slice filtration has been studied in more detail by Bachmann in \cite{1610.01346}. 

As in the previous section we can define a cellular version of this filtration via the collection
\[\cK^{\cell}_q = \{ \Sigma^{2a,a}\unit \mid, a \ge q \} \subseteq \SH(S)_{\cell}.\] 
This gives rise to the very effective cellular slice filtration of $\SH(S)_{\cell}$
\[
\cdots \Sigma^{q+1}_T \SH(S)^{\veff}_{\cell} \subset \Sigma^{q}_T \SH(S)^{\veff}_{\cell} \subset \cdots.
\]
Note that these categories are \emph{not} localizing. We write $\widetilde f^{\Cell}_q$ and $\widetilde s^{\Cell}_q$ for the associated functors, which are not triangulated.

The following can be proved via \Cref{lem:invertiblecomm}.  
\begin{lem}\label{lem:veffectivetsuspension}
  For a motivic spectrum $E$, we have $\widetilde{f}^{\Cell}_q(\Sigma^{2a,a} E) \simeq \Sigma^{2a,a} \widetilde{f}^{\Cell}_{q-a}(E)$ and similar for $\widetilde{s}_q^{\Cell}$. 
\end{lem}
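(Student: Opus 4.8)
The plan is to invoke \Cref{lem:invertiblecomm} in exactly the same way that \Cref{lem:shifts} follows from it. The statement to prove, \Cref{lem:veffectivetsuspension}, asserts that for a motivic spectrum $E$ one has $\widetilde{f}^{\Cell}_q(\Sigma^{2a,a} E) \simeq \Sigma^{2a,a} \widetilde{f}^{\Cell}_{q-a}(E)$, together with the analogous statement for $\widetilde{s}_q^{\Cell}$. First I would set $A = \Sigma^{2a,a}\unit$, which is an invertible object of $\SH(S)_{\cell}$ with inverse $\Sigma^{-2a,-a}\unit$. The content of \Cref{lem:invertiblecomm} that I need is: if $X \in \cC_q$ then $A \otimes X \in \cC_{q+k}$ for a fixed integer $k$ independent of $q$. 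Here $\cC_q = \Sigma^q_T\SH(S)^{\veff}_{\cell}$, the very effective cellular slice filtration, generated under colimits and extensions by $\cK^{\cell}_q = \{\Sigma^{2b,b}\unit \mid b \ge q\}$.

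The key step is therefore to check that tensoring with $A = \Sigma^{2a,a}\unit$ shifts this filtration by exactly $k = a$. Since $\Sigma^{2a,a}\unit \otimes \Sigma^{2b,b}\unit \simeq \Sigma^{2(a+b),a+b}\unit$, it sends the generating set $\cK^{\cell}_q$ bijectively onto $\cK^{\cell}_{q+a}$. Because $-\otimes A$ is an equivalence of categories (with inverse $-\otimes A^{-1}$), it commutes with colimits and preserves extensions and retracts, so it carries the subcategory generated under colimits and extensions by $\cK^{\cell}_q$ onto the subcategory generated under colimits and extensions by $\cK^{\cell}_{q+a}$; that is, $A \otimes \Sigma^q_T\SH(S)^{\veff}_{\cell} = \Sigma^{q+a}_T\SH(S)^{\veff}_{\cell}$. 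Hence the hypothesis of \Cref{lem:invertiblecomm} holds with $k = a$. Applying the lemma with this $A$ and $k$ gives $\widetilde{f}^{\Cell}_q(A \otimes E) \simeq A \otimes \widetilde{f}^{\Cell}_{q-a}(E)$, which is the desired formula, and the statement for $\widetilde{s}_q^{\Cell}$ follows from the defining cofiber sequences $\widetilde{f}^{\Cell}_{q+1}E \to \widetilde{f}^{\Cell}_q E \to \widetilde{s}^{\Cell}_q E$ just as in the proof of \Cref{lem:invertiblecomm} itself.

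I do not expect any genuine obstacle here; the only point requiring a sentence of justification is that the very effective cellular subcategories, although not localizing, are nonetheless stable under the autoequivalence $-\otimes\Sigma^{2a,a}\unit$ in the strong sense that this functor permutes them according to the shift $q \mapsto q+a$ — this is immediate from the description of the generators and the fact that an equivalence of $\infty$-categories preserves the operations (colimits, extensions) used to generate them. With that remark in place, the proof is a one-line appeal to \Cref{lem:invertiblecomm}, exactly parallel to \Cref{lem:shifts}.

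\begin{proof}
  Apply \Cref{lem:invertiblecomm} with $A = \Sigma^{2a,a}\unit$. This object is invertible with inverse $\Sigma^{-2a,-a}\unit$, and the autoequivalence $-\otimes A$ of $\SH(S)_{\cell}$ preserves colimits, extensions and retracts. Since $\Sigma^{2a,a}\unit \otimes \Sigma^{2b,b}\unit \simeq \Sigma^{2(a+b),a+b}\unit$, tensoring with $A$ sends the generating set $\cK^{\cell}_q$ of $\Sigma^q_T\SH(S)^{\veff}_{\cell}$ bijectively onto $\cK^{\cell}_{q+a}$, and hence sends $\Sigma^q_T\SH(S)^{\veff}_{\cell}$ onto $\Sigma^{q+a}_T\SH(S)^{\veff}_{\cell}$. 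Thus the hypothesis of \Cref{lem:invertiblecomm} holds with $k = a$, and the lemma gives $\widetilde{f}^{\Cell}_q(\Sigma^{2a,a}E) \simeq \Sigma^{2a,a}\widetilde{f}^{\Cell}_{q-a}(E)$. The statement for $\widetilde{s}^{\Cell}_q$ follows from the defining cofiber sequences.
\end{proof}
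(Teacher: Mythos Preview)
Your proposal is correct and follows exactly the approach indicated in the paper, which simply states that the lemma can be proved via \Cref{lem:invertiblecomm}. Your version merely spells out the verification of the hypothesis (that $-\otimes\Sigma^{2a,a}\unit$ shifts the very effective cellular filtration by $a$) in more detail than the paper's one-line pointer.
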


Since we always have $\Sigma^q_T\SH(S)^{\veff}_{\cell} \subset \Sigma^q_T \SH(S)^{\eff}_{\cell}$ there exists a natural transformation $\widetilde{f}^{\Cell}_q \to {f}^{\Cell}_q$.  The following is a consequence of \Cref{exmp:multslices}. 
\begin{lem}\label{lem:effvsveff}
  If $E$ is a cellular motivic spectrum such that $f^{\Cell}_q(E) \in \Sigma^q_{T}\SH(S)^{\veff}_{\cell}$ for all $q \in \Z$, then there are equivalences 
  \[
  \setlength\mathsurround{0pt}
  \begin{tikzcd}
\widetilde{f}^{\Cell}_q(E) \arrow{r}{\simeq} &   f^{\Cell}_q(E) 
  \end{tikzcd}
  \]
  and
  \[  \setlength\mathsurround{0pt}
  \begin{tikzcd}
\widetilde{s}^{\Cell}_q(E) \arrow{r}{\simeq} &   s^{\Cell}_q(E).
  \end{tikzcd}
  \]
\end{lem}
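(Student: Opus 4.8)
The plan is to deduce this directly from \Cref{exmp:multslices}, applied to $\cC = \SH(S)_{\cell}$ equipped with the two slice filtrations $\cC_q = \Sigma^q_T\SH(S)^{\eff}_{\cell}$ and $\widetilde{\cC}_q = \Sigma^q_T\SH(S)^{\veff}_{\cell}$, whose associated functors are $f^{\Cell}_q$ and $\widetilde{f}^{\Cell}_q$ respectively. The first thing to record is the inclusion $\widetilde{\cC}_q \subseteq \cC_q$ for every $q$: this is the containment $\Sigma^q_T\SH(S)^{\veff}_{\cell} \subseteq \Sigma^q_T\SH(S)^{\eff}_{\cell}$ already noted above, which holds because each very effective generator $\Sigma^{2a,a}\unit$ with $a \ge q$ is of the form $\Sigma^{c,b}\unit$ with $b = a \ge q$, hence belongs to the generating set $\cK^{\cell}_q$ of the cellular effective filtration. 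Consequently $\cC_q^\perp \subseteq \widetilde{\cC}_q^\perp$.

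With this in place, the hypothesis of the lemma — that $f^{\Cell}_q(E) \in \Sigma^q_T\SH(S)^{\veff}_{\cell} = \widetilde{\cC}_q$ for all $q \in \Z$ — is exactly the hypothesis ``$f_qE \in \widetilde{\cC}_q$ for all $q$'' appearing in \Cref{exmp:multslices}. Unwinding that example amounts to the following: apply \Cref{lem:recognitionconn} for the \emph{very effective} slice filtration to the effective truncation cofiber sequence $f^{\Cell}_q(E) \to E \to C$ with $C \in (\Sigma^q_T\SH(S)^{\eff}_{\cell})^\perp$. Its fiber term lies in $\widetilde{\cC}_q$ by hypothesis, and its cofiber term lies in $\cC_q^\perp \subseteq \widetilde{\cC}_q^\perp$; hence \Cref{lem:recognitionconn} identifies this sequence with the very effective truncation sequence, so that the canonical map $\widetilde{f}^{\Cell}_q(E) \to f^{\Cell}_q(E)$ is an equivalence for every $q$. (Equivalently, one may run the identity functor through \Cref{lem:adjointfunctor}, as remarked in \Cref{exmp:multslices}.)

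For the statement about slices, note that the hypothesis is imposed for all $q$ simultaneously, so in particular $\widetilde{f}^{\Cell}_{q+1}(E) \xr{\simeq} f^{\Cell}_{q+1}(E)$ as well, and these equivalences are compatible with the structure maps $\widetilde{f}^{\Cell}_{q+1} \to \widetilde{f}^{\Cell}_q$ and $f^{\Cell}_{q+1} \to f^{\Cell}_q$ (by the uniqueness clause in the universal property of the truncations). Passing to cofibers then yields $\widetilde{s}^{\Cell}_q(E) \xr{\simeq} s^{\Cell}_q(E)$ for all $q$, as desired. There is essentially no obstacle here beyond bookkeeping; the only point that requires attention is keeping the direction of the containments straight — it is $\widetilde{\cC}_q \subseteq \cC_q$, hence $\cC_q^\perp \subseteq \widetilde{\cC}_q^\perp$, which is precisely what makes the effective truncation recognizable as the very effective one rather than conversely.
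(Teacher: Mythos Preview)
Your proof is correct and takes essentially the same approach as the paper: the paper's own proof is the single sentence ``This is a consequence of \Cref{exmp:multslices},'' and your argument is precisely an unpacking of that example in the present setting. The only cosmetic discrepancy is the direction of the canonical arrow --- applying \Cref{lem:recognitionconn} as you describe literally produces the equivalence $f^{\Cell}_q(E) \to \widetilde{f}^{\Cell}_q(E)$ rather than its inverse --- but this is immaterial.
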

This gives the following. 
\begin{prop}\label{thm:effvsveff}
  Let $E$ be a localized quotient of $\MGL$, then
    \[\setlength\mathsurround{0pt}
    \begin{tikzcd}    
f_q(E) \arrow{r}{\simeq} &f^{\Cell}_q(E) \arrow{r}{\simeq} &  \widetilde f^{\Cell}_q(E) 
  \end{tikzcd}
  \]
  and
  \[  \setlength\mathsurround{0pt}
  \begin{tikzcd}
s_q(E) \arrow{r}{\simeq} &s^{\Cell}_q(E) \arrow{r}{\simeq} &  \widetilde s^{\Cell}_q(E)
  \end{tikzcd}
  \]
  for all $q \in \Z$. 
\end{prop}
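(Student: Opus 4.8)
The strategy is to reduce everything to \Cref{lem:effvsveff}. Granting that a localized quotient $E$ of $\MGL$ is $c$-cellular — which holds because it is built from $\MGL[1/c]$ (or from $\MGL$ directly when $c=1$) by iterated cofibers, namely the quotients by the $a_i$ and the mod-$p$ operation, and by filtered colimits, namely $p$-localization, the passage from finite to arbitrary index sets, and the inversion of homogeneous elements, all of which preserve $c$-cellularity — \Cref{thm:effcellcompare} supplies the first equivalences $f_q(E)\xr{\simeq}f^{\Cell}_q(E)$ and $s_q(E)\xr{\simeq}s^{\Cell}_q(E)$. By \Cref{lem:effvsveff} the remaining two equivalences follow once we show that $f^{\Cell}_q(E)\simeq f_q(E)$ lies in $\Sigma^q_T\SH(S)^{\veff}_{\cell}$ for every $q\in\Z$.

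For $E=\MGL[1/c]$ this can be read off from the proof of \Cref{lem:slicemgl}: there $f_q(\MGL[1/c])$ is exhibited as a colimit of $\MGL[1/c]$-modules $\Sigma^{2k,k}\MGL[1/c]$ with $k\ge q$. Since $\MGL[1/c]\in\SH(S)^{\veff}_{\cell}$ by \Cref{rem:cellularquotients}, the $\mathbb{P}^1$-twist identity $\Sigma^{2k,k}\bigl(\Sigma^{0}_T\SH(S)^{\veff}_{\cell}\bigr)=\Sigma^{k}_T\SH(S)^{\veff}_{\cell}$ underlying \Cref{lem:veffectivetsuspension} places each term in $\Sigma^{k}_T\SH(S)^{\veff}_{\cell}\subseteq\Sigma^{q}_T\SH(S)^{\veff}_{\cell}$ for $k\ge q$, and closure of $\Sigma^{q}_T\SH(S)^{\veff}_{\cell}$ under colimits gives $f_q(\MGL[1/c])\in\Sigma^{q}_T\SH(S)^{\veff}_{\cell}$. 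The same holds for $\MGL_{(p)}$, a filtered colimit of copies of $\MGL$, since $f_q$ commutes with filtered colimits (\Cref{lem:colimsq}).

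Now induct over the construction of $E$, the hypothesis being that $f_q(M)\in\Sigma^q_T\SH(S)^{\veff}_{\cell}$ for all $q$. The effective slice functor $f_q$ is exact, as $\Sigma^q_T\SH(S)^{\eff}$ is a localizing — hence stable — subcategory; so for a quotient $M'=\mathrm{cofib}\bigl(a_i\colon\Sigma^{2i,i}M\to M\bigr)$ — and for the mod-$p$ operation, which is the case $i=0$ — \Cref{lem:shifts} gives a cofiber sequence $\Sigma^{2i,i}f_{q-i}(M)\to f_q(M)\to f_q(M')$ whose first two terms lie in $\Sigma^q_T\SH(S)^{\veff}_{\cell}$ (the first by the inductive hypothesis and the twist), so its cofiber, being a pushout, lies there too. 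The remaining operations are filtered colimits: inverting a homogeneous $z$ of bidegree $(2d,d)$ with $d\ge0$ gives $M[z^{-1}]\simeq\colim_k\Sigma^{-2kd,-kd}M$, whence $f_q(M[z^{-1}])\simeq\colim_k\Sigma^{-2kd,-kd}f_{q+kd}(M)$ by \Cref{lem:colimsq,lem:shifts}, and each term lies in $\Sigma^q_T\SH(S)^{\veff}_{\cell}$ by the inductive hypothesis and the twist; likewise for $p$-localization and for the colimit over finite index subsets. This proves $f_q(E)\in\Sigma^q_T\SH(S)^{\veff}_{\cell}$ for all $q$, and \Cref{lem:effvsveff} completes the proof. (Alternatively, for the quotients $\MGL/(\cal{I})$ one can bypass the induction by invoking \Cref{prop:lttechnicalassumption} and the Spitzweck-style computation of \cite{levine_tripathi} to exhibit $f_q$ directly as such a colimit.)

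The point to watch is that the categories $\Sigma^q_T\SH(S)^{\veff}_{\cell}$ are closed under colimits and extensions but \emph{not} under fibers or limits; this is exactly why the argument must build $f_q(E)$ out of the pieces $\Sigma^{2k,k}\MGL[1/c]$ by colimits and extensions and cannot instead be organized around the slices $s_qE$, whose assembly into $f_qE$ passes through an inverse limit. Indeed $f_q\ne\widetilde f_q$ already on the sphere spectrum, whose slices carry $\Sigma^{2q-s,q}\M\Z$-summands with $s>0$ that are not very effective, so the input that the slices of a localized quotient of $\MGL$ only involve $\Sigma^{2q,q}\M\Z[1/c]$-type summands is essential.
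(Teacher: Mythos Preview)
Your proof is correct and follows the paper's overall strategy: obtain the left-hand equivalences from \Cref{thm:effcellcompare}, then reduce the right-hand ones via \Cref{lem:effvsveff} to showing $f_q(E)\in\Sigma^q_T\SH(S)^{\veff}_{\cell}$, establishing this first for $\MGL$ from the Spitzweck--Hoyois colimit description and then propagating through cofibers and filtered colimits. The one substantive difference is the localization step. The paper appeals to \cite[Corollary~2.4]{levine_tripathi}, whose hypothesis it must first verify via \Cref{prop:lttechnicalassumption}, in order to exhibit $f_q(M[\cal{I}_0^{-1}])$ as a colimit of terms $\Sigma^{-2k,-k}f_{q+k}M$. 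Your direct argument---writing $M[z^{-1}]$ as the telescope $\colim_k\Sigma^{-2kd,-kd}M$ and applying \Cref{lem:colimsq} and \Cref{lem:shifts}---reaches the same conclusion without that auxiliary input, so it is a genuine simplification: \Cref{prop:lttechnicalassumption} is not needed for this proposition. Both arguments share the same mild looseness about $c$-cellularity versus cellularity when invoking \Cref{thm:effcellcompare} (a localized quotient of $\MGL$ is built from $\MGL$, not $\MGL[1/c]$); in the characteristic-zero setting relevant to Betti realization the distinction disappears.
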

\begin{proof}
  Since the motivic spectra considered are always cellular, the left hand equivalences follow from \Cref{thm:effcellcompare}. To show the theorem for $\MGL$ we must show that $f_q(\MGL) \in \Sigma^q_T\SH(S)^{\veff}_{\cell}$. However, we have already seen in the proof of \Cref{lem:slicemgl} that $f_q(\MGL)$ is a colimit of a diagram of $\MGL$-modules of the form $\Sigma^{2k,k}\MGL$ where $k \ge q$.  We know that $\MGL \in \SH(F)^{\veff}_{\cell}$ by \Cref{rem:cellularquotients}, and hence that $\Sigma^{2k,k}\MGL \in \Sigma_T^k\SH(F)^{\veff}_{\cell} \subseteq \Sigma_T^q\SH(F)^{\veff}_{\cell}$. Since this category is closed under colimits we are done.

It follows by the defining cofiber sequences that $f_q(\MGL/(a_{i_1},\ldots,a_{i_n}))$ is in $\Sigma^q_T\SH(S)^{\veff}_{\cell}$ for any finite subset $\{ i_i,\ldots, i_n\} \subset \N$, so the proposition is true for $\MGL/(a_{i_1},\ldots,a_{i_n})$. Similar arguments work in the $p$-local and mod $p$-case. For an arbitrary subset $\cal{I}$ we defined $\MGL/(\cal{I})$ as a filtered colimit of terms of the form $\MGL/(a_{i_1},\ldots,a_{i_n})$. It follows that the proposition holds for $\MGL/(\cal{I})$. 

  To deal with the localizations we appeal to work of Levine and Tripathi. In particular, let $\cal{I} \subset \N$ be arbitrary, and let $M$ denote either $\MGL/(\cal{I})$, $\MGL_{(p)}/(\cal{I})$ or $\MGL/(\cal{I},p)$. By \Cref{prop:lttechnicalassumption} the assumptions of \cite[Corollary 2.4]{levine_tripathi} are satisfied, and the proof of Levine and Tripathi shows that $f_q(M[\cal{I}_0^{-1}])$ is a colimit of terms of the form $\Sigma^{-2k,-k}f_{q+k}M$ for $k \ge 0$. The first part of this proposition shows that $f_{q+k}M \in \Sigma^{q+k}_T\SH(S)^{\veff}_{\cell}$, and hence that $\Sigma^{-2k,-k}f_{q+k}M \in \Sigma^{q}_T\SH(S)^{\veff}_{\cell}$. The result follows since this category is closed under colimits.  
  \end{proof}

It is remarked in \cite{mot_twisted}, and then proved in detail in \cite{1610.01346} that the very effective slice filtration is the positive part of a $t$-structure on the very effective motivic stable homotopy category. In the case that $S$ is the spectrum of a perfect field $F$, Bachmann used the homotopy $t$-structure to give a description of the very effective motivic category in terms of homotopy sheaves. We will give a similar description for the very effective cellular slice filtration. We begin with the cellular analog of the homotopy $t$-structure, as defined in \cite[Section 2.1]{hoyois_hhm}. We restrict ourself to working over $\SH(F)$ where $F$ is a perfect field. 

\begin{defn}
  The category of cellularly $d$-connective objects (or just connective when $d=0$) is the smallest full-subcategory of $\SH(F)_{\cell}$ generated under colimits and extensions by the collection
\[
\{ \Sigma^{s,t}\unit \mid s-t \ge d \}. 
\]
\end{defn}
Note that $\widetilde f^{\Cell}_qE$ is always cellularly $q$-connective by definition. 

\begin{prop}\label{prop:cellconnectivity}
  A cellular motivic spectrum $E\in \SH(F)_{\cell}$ is cellularly connective if and only if $\pi_{a,b}E = 0$ for $a-b< 0$. 
\end{prop}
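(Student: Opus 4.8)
The plan is to establish the two implications separately, using the characterization of cellularly connective objects as the subcategory generated under colimits and extensions by $\{\Sigma^{s,t}\unit \mid s-t\ge 0\}$. For the forward direction, suppose $E$ is cellularly connective. The class of cellular spectra $X$ with $\pi_{a,b}X = 0$ for $a-b<0$ is closed under colimits and extensions: closure under extensions is immediate from the long exact sequence in bigraded homotopy groups associated to a cofiber sequence, and closure under (filtered, hence all) colimits follows because bigraded homotopy groups of a filtered colimit of spectra are the colimit of the bigraded homotopy groups, together with the fact that arbitrary colimits in a stable $\infty$-category are built from finite colimits and filtered ones. Since each generator $\Sigma^{s,t}\unit$ with $s-t\ge 0$ satisfies $\pi_{a,b}(\Sigma^{s,t}\unit) = \pi_{a-s,b-t}\unit$, which vanishes for $(a-b) - (s-t) < 0$, hence in particular when $a-b<0$ and $s-t\ge 0$, it follows that every cellularly connective $E$ has $\pi_{a,b}E=0$ for $a-b<0$.

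For the converse, I would argue by a standard cellular-approximation/Whitehead-type argument. Suppose $E\in\SH(F)_{\cell}$ satisfies $\pi_{a,b}E=0$ for $a-b<0$. Let $E'$ denote the cellularly connective cover of $E$, i.e. apply the truncation functor for the cellular homotopy $t$-structure; there is a natural map $E'\to E$ whose fiber, by construction, lies in the "coconnective" part, meaning $\pi_{a,b}$ of the fiber vanishes for $a-b\ge 0$. Chasing the long exact sequence of bigraded homotopy groups for the fiber sequence $\mathrm{fib}\to E'\to E$, and using that $E'$ is cellularly connective (so by the forward direction $\pi_{a,b}E'=0$ for $a-b<0$) together with the hypothesis on $E$, one finds that $\pi_{a,b}(\mathrm{fib})=0$ for all $a,b$. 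Since the fiber is a cellular spectrum (cellular spectra are closed under fibers) with vanishing bigraded homotopy groups, and since over a perfect field the bigraded sphere spectra $\Sigma^{s,t}\unit$ detect equivalences between cellular spectra — i.e. a cellular spectrum with trivial $\pi_{*,*}$ is contractible — the fiber is zero, so $E\simeq E'$ is cellularly connective.

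The main obstacle is the last step of the converse: justifying that a cellular spectrum with vanishing bigraded homotopy groups is contractible. This is where cellularity is essential — it fails for general motivic spectra — and it amounts to the observation that $\SH(F)_{\cell}$ is compactly generated by $\{\Sigma^{s,t}\unit\}$, so $[\Sigma^{s,t}\unit, X]_* \cong \pi_{*,*}X$ detects whether $X\simeq 0$. I would cite this as a consequence of the Dugger--Isaksen description of the cellular category, or reformulate the whole argument directly in terms of the $t$-structure: the cellular homotopy $t$-structure of \cite[Section 2.1]{hoyois_hhm} has, by definition, connective part generated by the $\Sigma^{s,t}\unit$ with $s-t\ge 0$, and its heart and truncations are controlled by the bigraded homotopy groups precisely because these generators are compact; the proposition is then essentially the statement that $X$ is connective in this $t$-structure iff it has no negative homotopy, which is the standard characterization of the connective part of a $t$-structure generated by a set of compact objects (as in \cite[Proposition 1.4.4.11]{ha}). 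I would also remark that one could alternatively deduce the forward direction more quickly from the fact that $\widetilde f^{\Cell}_q E$ is cellularly $q$-connective by definition, but the homotopy-group bookkeeping above is cleaner and self-contained.
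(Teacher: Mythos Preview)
Your forward direction matches the paper's, but you should name the key input: the vanishing $\pi_{c,d}\unit = 0$ for $c-d<0$ is Morel's stable $\mathbb{A}^1$-connectivity theorem, not a formality. The paper cites this explicitly; you assert it without comment.

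Your converse is correct but genuinely different from the paper's. The paper constructs a cellularly connective approximation $Z\to E$ by hand, using the method of killing cells (following Bachmann), so that $Z\to E$ is a $\pi_{*,*}$-isomorphism \emph{by construction}, and then finishes with Dugger--Isaksen. You instead take the abstract connective cover $E'\to E$ from the $t$-structure and chase the long exact sequence to show the fiber has trivial $\pi_{*,*}$, landing on the same Dugger--Isaksen endgame. Your route is slicker and avoids the explicit cell-attaching. One small bookkeeping gap: your stated vanishing for the fiber ($\pi_{a,b}=0$ for $a-b\ge 0$) together with the hypothesis on $E$ does not cover the case $a-b=-1$ in the long exact sequence, since there $\pi_{a+1,b}E$ need not vanish. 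The fix is immediate---either observe that the fiber is $\Sigma^{-1}$ of the coconnective cofiber, so its $\pi_{a,b}$ already vanishes for $a-b\ge -1$, or use that $\pi_{c,d}E'\to\pi_{c,d}E$ is an isomorphism for $c-d\ge 0$ by the universal property of the cover. One advantage of the paper's killing-cells argument is that it transfers verbatim to the very effective analogue in the next proposition, where one simply restricts which spheres are attached; your $t$-structure argument would have to be rerun with the corresponding $t$-structure on $\SH(F)^{\eff}_{\cell}$.
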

\begin{proof}
  Let $\cC$ denote the category of those cellular spectra $E$ such that $\pi_{a,b}E = 0$ for $a-b < 0$. We observe that $\Sigma^{s,t}\unit$ is in $\cC$ whenever $s \ge t$ by Morel's connectivity theorem \cite[Section 5.3]{morel_connectivity}. It is clear that $\cC$ is closed under extensions, so we must show that it is closed under arbitrary colimits. As in the proof of \cite[Lemma 5.10]{mot_twisted} we can assume the colimit is either a coproduct or a pushout, for which the result is clear. We thus see that all cellularly connective objects are in $\cC$. 

   For the converse, recall again that $\Sigma^{s,t}\unit$ is connective for $s \ge t$. Suppose now we are given a cellular spectrum $E$ with $\pi_{a,b}E = 0$ for $a-b<0$. Then, by the method of killing cells, as described in the proof of Proposition 4(2) of \cite{1610.01346} (see also the `Details on killings cells' after the proposition), one can construct a cellularly connective spectrum $Z$ along with a map $Z \to E$ inducing an isomorphism on bigraded homotopy groups. Since both $Z$ and and $E$ are cellular, this map is an equivalence \cite[Corollary 7.2]{di_cell}.
\end{proof}
\begin{rem}
  If $E$ is $d$-connective in the sense of \cite[Section 2.1]{hoyois_hhm}, then we observe that $\pi_{a,b}\Cell(E) \cong \pi_{a,b}E = 0$ for $a-b<d$ by \cite[Theorem 2.3]{hoyois_hhm}. It follows that $\Cell(E)$ is cellularly $d$-connective in the above sense.
\end{rem}
\begin{rem}
  A similar, but not equivalent, category is considered by Shkmebi--Isaksen \cite{isaksen_shkembi}, who only allow the spheres $\Sigma^{s,t}\unit$ for $s-t \ge 0$ and $s \ge 0$. Their version of \Cref{prop:cellconnectivity} is \cite[Proposition 3.17]{isaksen_shkembi} - the proof is in the same spirit as the one given above. 
\end{rem}

If we restrict to effective cellular spectra, then we end up with a similar characterization of very effective cellular spectra - we thank Tom Bachmann for suggesting that this holds. 
\begin{prop}\label{prop:veffcell}
  If $E \in \Sigma^q_T\SH(F)_{\cell}^{\eff}$, then $E \in \Sigma^q\SH(F)_{\cell}^{\veff}$ if and only if 
  \[
\pi_{a,b}E = 0
  \]
  for all $a,b \in \Z$ satisfying $a-b < q$ and $b \ge q$. 
\end{prop}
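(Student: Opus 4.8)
The plan is to prove the two implications separately. The forward one is formal: each generator $\Sigma^{2a,a}\unit$ of $\Sigma^q_T\SH(F)^{\veff}_{\cell}$ with $a \ge q$ is cellularly $q$-connective (as $2a-a = a \ge q$), and the cellularly $q$-connective cellular spectra are closed under colimits and extensions by the long-exact-sequence argument used in the proof of \Cref{prop:cellconnectivity}; hence every $E \in \Sigma^q_T\SH(F)^{\veff}_{\cell}$ is cellularly $q$-connective, and applying \Cref{prop:cellconnectivity} after the autoequivalence $\Sigma^{-q,0}$ (which carries cellularly $q$-connective spectra to cellularly connective ones) gives $\pi_{a,b}E = 0$ whenever $a - b < q$, a fortiori whenever $a-b<q$ and $b \ge q$.

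For the reverse implication, I would first reduce to $q = 0$ using the autoequivalence $\Sigma^{-2q,-q}$ of $\SH(F)_{\cell}$: it carries $\Sigma^q_T\SH(F)^{\eff}_{\cell}$ onto $\Sigma^0_T\SH(F)^{\eff}_{\cell}$ and $\Sigma^q_T\SH(F)^{\veff}_{\cell}$ onto $\Sigma^0_T\SH(F)^{\veff}_{\cell}$ (compare \Cref{lem:veffectivetsuspension}), and transports the hypothesis into $\pi_{a,b}(\Sigma^{-2q,-q}E) = \pi_{a+2q,\,b+q}E = 0$ for $a-b<0$ and $b \ge 0$. So assume $q = 0$, with $E \in \SH(F)^{\eff}_{\cell}$ satisfying $\pi_{a,b}E = 0$ for $a < b$, $b \ge 0$; the goal is $E \in \SH(F)^{\veff}_{\cell}$. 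Since $E$ is effective, the natural map $\widetilde f^{\Cell}_0 E \to f^{\Cell}_0 E \simeq E$ is defined, and it suffices to show its cofiber $C$ vanishes; as $C$ is cellular this reduces, by the Whitehead theorem \cite[Corollary~7.2]{di_cell}, to showing $\pi_{s,t}C = 0$ for all $(s,t)$. Equivalently, one may run the ``killing cells'' construction of \Cref{prop:cellconnectivity} and \cite[Proposition~4(2)]{1610.01346}, producing a map $Z \to E$ with $Z \in \SH(F)^{\veff}_{\cell}$ that is an isomorphism on all bigraded homotopy groups, built by attaching only cells $\Sigma^{s,t}\unit$ with $t \ge 0$ and $s \ge 2t$ — the cells available in $\Sigma^0_T\SH(F)^{\veff}_{\cell}$, being non-negative simplicial suspensions of the generators $\Sigma^{2a,a}\unit$ $(a \ge 0)$.

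In the first formulation, $C \in (\Sigma^0_T\SH(F)^{\veff}_{\cell})^{\perp}$ by \Cref{lem:recognitionconn}, so $\pi_{s,t}C = 0$ whenever $t \ge 0$ and $s \ge 2t$; and the long exact sequence of $\widetilde f^{\Cell}_0 E \to E \to C$, together with the cellular $0$-connectivity of $\widetilde f^{\Cell}_0 E$ and the hypothesis on $E$, forces $\pi_{s,t}C = 0$ whenever $t \ge 0$ and $s - t < 0$. The step I expect to be the main obstacle is then showing $\pi_{s,t}C = 0$ in the remaining bidegrees — the range $0 \le s - t < t$ (so $t > 0$) and all $t < 0$ — where the effectivity of $E$, and hence of $C$, must be exploited: a cellular effective spectrum is assembled from cells $\Sigma^{a',b'}\unit$ with $b' \ge 0$ only, each having $\pi_{a,b}$ supported in $\{a - b \ge a' - b'\}$ by Morel's connectivity theorem, and one must propagate the hypothesized vanishing in weights $b \ge 0$ to vanishing in weights $b < 0$ and in the range $0 \le s - t < t$. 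In \cite{1610.01346} this is done by an induction along the Postnikov tower for the very effective $t$-structure, converting vanishing of the non-negative-weight homotopy sheaves into vanishing of the negative-weight ones via the contraction isomorphisms; carrying that bookkeeping through in the cellular setting — so that at every stage only the permitted cells $\Sigma^{s,t}\unit$ with $t \ge 0$, $s \ge 2t$ need be attached — is the technical heart of the argument. Once this is established, $C \simeq 0$, equivalently $Z \simeq E$ lies in $\SH(F)^{\veff}_{\cell}$, and the reverse implication follows.
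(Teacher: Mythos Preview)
Your forward direction and the reduction to $q=0$ are correct and match the paper. The paper's proof of the converse is far terser than your proposal: it simply runs the killing-cells construction of \Cref{prop:cellconnectivity} inside $\SH(F)^{\eff}_{\cell}$, noting that because $E$ is effective one only needs cells $\Sigma^{a,b}\unit$ with $b\ge 0$, and because of the vanishing hypothesis one only needs $a\ge b$; it then asserts that the resulting $Z$ lies in $\SH(F)^{\veff}_{\cell}$ and is weakly equivalent to $E$.

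Your proposal, by contrast, restricts from the outset to cells $\Sigma^{s,t}\unit$ with $s\ge 2t$ and $t\ge 0$ --- those manifestly in $\SH(F)^{\veff}_{\cell}$ --- and then correctly observes that this leaves the range $t\le s<2t$ (and all of $t<0$) uncovered. You do not resolve this; you label it ``the technical heart'' and propose to import Bachmann's argument via contraction isomorphisms. That is a genuine gap: Bachmann's argument in \cite{1610.01346} uses the contraction structure on homotopy \emph{sheaves} of effective spectra, and there is no evident analogue of that structure for bigraded homotopy \emph{groups} in the cellular setting. So the method you point to does not transfer in any obvious way, and your proposal halts precisely where the paper's one-line assertion takes over. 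In effect you have correctly located the only nontrivial step, but not supplied it; the paper's proof treats the same step as a direct parallel to \Cref{prop:cellconnectivity} rather than as an obstacle requiring sheaf-theoretic input.
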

\begin{proof}
  This is identical to the previous proposition. We can reduce to the case $q = 0$ by shifting. Let $\cD$ denote the collection of those $E \in \SH(F)_{\cell}^{\eff}$ such that  $\pi_{a,b}E = 0$ for all $a,b \in \Z$ satisfying $a-b < 0$ and $b \ge 0$. This is closed under colimits and extension and contains $\Sigma^{2a,a}\unit$ for $ a \ge 0$, so that in particular $\SH(F)^{\veff}_{\cell} \subseteq \cD$. Conversely, if $E$ is in $\cD$ then by killing cells as above we can build $Z \in \SH(F)^{\veff}_{\cell}$ along with a map $Z \to E$ inducing an equivalence in bigraded homotopy groups (here we only need to use the spheres $\Sigma^{a,b}\unit$ with $a - b <0 $ and $b \ge 0$ because $E \in \SH(F)^{\eff}_{\cell}$ by assumption).   
\end{proof}
\begin{rem}
  The collection of cellularly connective objects forms the positive part of a $t$-structure on $\SH(F)_{\cell}$, and the collection of cellular very effective motivic spectra forms the positive part of a $t$-structure on $\SH(F)^{\eff}_{\cell}$. By the same argument as \cite[Proposition 4.3(3)]{1610.01346} the functor $r_0 \colon \SH(F)_{\cell} \to \SH(F)_{\cell}^{\eff}$ is $t$-exact. 
\end{rem}
\begin{cor}\label{cor:cellle}
  If $f_q^{\Cell}E$ is cellularly $q$-connective for all $q \in \Z$, then there are equivalences
  \[\setlength\mathsurround{0pt}
  \begin{tikzcd}
\widetilde{f}^{\Cell}_q(E) \arrow{r}{\simeq} &   f^{\Cell}_q(E) 
  \end{tikzcd}
  \]
  and
  \[\setlength\mathsurround{0pt}
  \begin{tikzcd}
\widetilde{s}^{\Cell}_q(E) \arrow{r}{\simeq} &   s^{\Cell}_q(E).
  \end{tikzcd}
  \]
\end{cor}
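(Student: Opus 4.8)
The plan is to reduce the statement to \Cref{lem:effvsveff}, whose hypothesis asks precisely that $f^{\Cell}_q(E) \in \Sigma^q_T\SH(F)^{\veff}_{\cell}$ for every $q \in \Z$; once that membership is established, the claimed equivalences $\widetilde f^{\Cell}_q(E)\xr{\simeq}f^{\Cell}_q(E)$ and $\widetilde s^{\Cell}_q(E)\xr{\simeq}s^{\Cell}_q(E)$ are immediate. So the whole content is to check that cellular $q$-connectivity of $f^{\Cell}_q(E)$ forces it to lie in the very effective cellular subcategory.

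First I would translate cellular connectivity into a vanishing range for bigraded homotopy groups. By \Cref{prop:cellconnectivity} a cellular spectrum $X$ is cellularly connective iff $\pi_{a,b}X=0$ for $a-b<0$; shifting the generating spheres $\Sigma^{s,t}\unit$ with $s-t\ge q$ down by $\Sigma^{-q,0}$ (equivalently using the suspension isomorphism, cf.\ \Cref{lem:shifts}) shows that $X$ is cellularly $q$-connective iff $\pi_{a,b}X=0$ for $a-b<q$. Thus our hypothesis says exactly $\pi_{a,b}(f^{\Cell}_qE)=0$ whenever $a-b<q$. Now $f^{\Cell}_q(E)\in\Sigma^q_T\SH(F)^{\eff}_{\cell}$ by construction of the effective cellular cover, so \Cref{prop:veffcell} applies: $f^{\Cell}_q(E)$ lies in $\Sigma^q_T\SH(F)^{\veff}_{\cell}$ iff $\pi_{a,b}(f^{\Cell}_qE)=0$ for all $(a,b)$ with $a-b<q$ and $b\ge q$. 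Since this set of bidegrees is contained in $\{(a,b):a-b<q\}$, on which vanishing already holds, the criterion of \Cref{prop:veffcell} is met, and hence $f^{\Cell}_q(E)\in\Sigma^q_T\SH(F)^{\veff}_{\cell}$ for all $q$. Applying \Cref{lem:effvsveff} then yields the desired equivalences.

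There is no genuine obstacle here; the only point needing a moment's attention is the bookkeeping of the shift by $q$ relating ``cellularly $q$-connective'' to the homotopy-group vanishing range, together with the observation that the extra constraint $b\ge q$ appearing in \Cref{prop:veffcell} only shrinks the set of bidegrees that must be checked, and so is automatically satisfied. If desired, one can make this completely transparent by first reducing to the case $q=0$ using \Cref{lem:veffectivetsuspension} and \Cref{lem:shifts}.
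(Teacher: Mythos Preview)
Your proposal is correct and follows exactly the route the paper takes: the paper's proof is the single sentence ``This is a consequence of the previous proposition and \Cref{lem:effvsveff},'' and you have simply unpacked that sentence, translating cellular $q$-connectivity into the homotopy-group vanishing via (the shifted form of) \Cref{prop:cellconnectivity}, feeding that into \Cref{prop:veffcell} to obtain $f^{\Cell}_q(E)\in\Sigma^q_T\SH(F)^{\veff}_{\cell}$, and then invoking \Cref{lem:effvsveff}. There is no substantive difference in approach.
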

\begin{proof}
  This is a consequence of the previous proposition and \Cref{lem:effvsveff}. 
  \end{proof}
 The main reason for introducing cellularly $q$-effective spectra is to investigate the very effective cellular slices of a Landweber exact motivic spectrum. The analogous argument also works in the non-cellular situation, see \cite[Lemma 2.4 and Remark 2.5]{1712.01349}.
\begin{prop}\label{lem:prople}
  Let $E \in \SH(F)$ be a Landweber exact motivic spectrum. Then, there are equivalences
  \[\setlength\mathsurround{0pt}
  \begin{tikzcd}
f_q(E) \arrow{r}{\simeq} &f^{\Cell}_q(E) \arrow{r}{\simeq} &  \widetilde f^{\Cell}_q(E) 
  \end{tikzcd}
  \]
  and
  \[\setlength\mathsurround{0pt}
  \begin{tikzcd}
s_q(E) \arrow{r}{\simeq} &s^{\Cell}_q(E) \arrow{r}{\simeq} &  \widetilde s^{\Cell}_q(E)
  \end{tikzcd}
  \]
  for all $q \in \Z$. 
\end{prop}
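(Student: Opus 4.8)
The plan is to deduce the statement from the machinery already set up in this section, treating the two families of equivalences in turn. The left-hand equivalences $f_q(E)\simeq f_q^{\Cell}(E)$ and $s_q(E)\simeq s_q^{\Cell}(E)$ are exactly \Cref{lem:slicele}, since a Landweber exact motivic spectrum is cellular. So the real work is in the right-hand equivalences $f_q^{\Cell}(E)\simeq\widetilde f_q^{\Cell}(E)$ and $s_q^{\Cell}(E)\simeq\widetilde s_q^{\Cell}(E)$; by \Cref{cor:cellle} it suffices to show that $f_q^{\Cell}(E)$ is cellularly $q$-connective for every $q\in\Z$, and by the left-hand equivalence already obtained this is the same as showing that $f_q(E)$ is cellularly $q$-connective.

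The first reduction I would make is to the case where the Landweber exact $L[1/c]$-module $M_*$ is flat. Exactly as in the proof of \Cref{lem:slicele}, $E$ is a retract of $\MGL[1/c]\otimes E$, and $\MGL[1/c]\otimes E$ is the motivic spectrum associated to the flat Landweber exact module $LB[1/c]\otimes_{L[1/c]}M_*$. Since $f_q$, being a functor, sends retracts to retracts, and the class of cellularly $q$-connective spectra is closed under retracts (it is closed under colimits, hence absorbs split idempotents), it suffices to treat $\MGL[1/c]\otimes E$; so I may assume $M_*$ is flat.

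For flat $M_*$, Lazard's theorem presents $M_*$ as a filtered colimit of finite direct sums of shifts of $L[1/c]$, and, as in the proof of \Cref{lem:slicele}, $E$ becomes a filtered colimit of finite direct sums of shifts $\Sigma^{2k,k}\MGL[1/c]$ (the shifts being allowed to be negative, which causes no trouble). Since $f_q$ commutes with filtered colimits and with finite direct sums (by \Cref{lem:colimsq}; in fact with all colimits, as $\Sigma^q_T\SH(S)^{\eff}$ is localizing), and $f_q(\Sigma^{2k,k}\MGL[1/c])\simeq\Sigma^{2k,k}f_{q-k}(\MGL[1/c])$ by \Cref{lem:shifts}, while the cellularly $q$-connective spectra are closed under finite sums and filtered colimits, it remains only to show that each $\Sigma^{2k,k}f_{q-k}(\MGL[1/c])$ is cellularly $q$-connective. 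Here I would invoke the proof of \Cref{lem:slicemgl}, which exhibits $f_{q-k}(\MGL[1/c])$ as a colimit of $\MGL[1/c]$-modules of the form $\Sigma^{2j,j}\MGL[1/c]$ with $j\ge q-k$; since $\MGL[1/c]\in\SH(F)^{\veff}_{\cell}$ (see \Cref{rem:cellularquotients} and \Cref{thm:effvsveff}) and this subcategory is generated under colimits and extensions by the cellularly connective spheres $\Sigma^{2a,a}\unit$ with $a\ge 0$, the spectrum $\MGL[1/c]$ is cellularly connective, hence each $\Sigma^{2j,j}\MGL[1/c]$ is cellularly $j$-connective, a fortiori cellularly $(q-k)$-connective, so $f_{q-k}(\MGL[1/c])$ is cellularly $(q-k)$-connective. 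Finally, smashing with the invertible Tate object $\Sigma^{2k,k}\unit$ raises cellular connectivity by exactly $k$ — one verifies this on the generating spheres $\Sigma^{s,t}\unit$ and propagates it through colimits and extensions — so $\Sigma^{2k,k}f_{q-k}(\MGL[1/c])$ is cellularly $q$-connective, and $f_q(E)$ is cellularly $q$-connective as required. \Cref{cor:cellle} then supplies all four equivalences.

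I expect the only genuine obstacle to be careful bookkeeping rather than any new idea: one must check that the class of cellularly $q$-connective objects is stable under all the operations used (retracts, finite sums, filtered colimits, and the Tate twist $\Sigma^{2k,k}(-)$, with the predicted shift of the connectivity index), and handle the shifts produced by Lazard's theorem — which may be negative — uniformly. Everything else is a reshuffling of \Cref{lem:slicemgl}, \Cref{cor:cellle}, and the retract argument from the proof of \Cref{lem:slicele}.
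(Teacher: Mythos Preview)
Your proposal is correct and follows essentially the same approach as the paper. The paper's proof is a one-line reference to Hoyois \cite[Lemma 8.11]{hoyois_hhm} to show $f_q(E)\simeq f_q^{\Cell}(E)$ is cellularly $q$-connective and then invokes \Cref{cor:cellle}; you have simply written out the Hoyois argument in detail (retract reduction to the flat case, Lazard's theorem, and the description of $f_q\MGL[1/c]$ from \Cref{lem:slicemgl}), exactly as was done in the proof of \Cref{lem:slicele}.
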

\begin{proof}
  The exact same argument as given by Hoyois in \cite[Lemma 8.11]{hoyois_hhm} shows that $f_qE \simeq f_q^{\Cell}(E)$ is always cellularly $q$-connective. The result is then a consequence of \Cref{cor:cellle}. 
\end{proof}
\section{Equivariant slices}\label{sec:equivariant}
In the previous sections we have studied various filtrations on the stable motivic homotopy category. In this section, we study the Hill--Hopkins--Ravenel slice filtration in $C_2$-equivariant homotopy theory, and show how Betti realization interacts with the different slice filtrations. 
\subsection{The $C_2$-equivariant homotopy category}
In this section we give a brief introduction to the category of $C_2$-equivariant spectra. For a more detailed discussion, one can see the appendix of \cite{hhr}, or for a slightly shorter introduction, see \cite[Section 2]{hill_meier}.

We define $\SH(C_2)$ to be the $\infty$-category associated to the symmetric monoidal category of orthogonal $C_2$-spectra \cite{mandell_may}. This has been studied in some detail in \cite[Section 5]{mnn_15}. In particular, it is a stably monoidal category, see Definition 5.10 and Remark 5.12 of \cite{mnn_15}. It has a set of compact generators, given by $\{S^0, \Sigma^{\infty}_+ C_2 \}$. 

We will grade homotopy groups by the real representation ring $RO(C_2) = \{ a + b \sigma \mid a,b \in \Z \}$, where $\sigma$ denotes the sign representation.  We follow motivic notation, and write $S^{p,q\sigma}$ for the smash product $(S^1)^{p-q} \otimes S^{q \sigma}$ (if $q = 0$, then we will sometimes just write $S^p$). We let $\pi^{C_2}_{p,q}E = [S^{p,q\sigma},E]^{C_2}$, and write $\pi^{e}_{p,q}E$ for the underlying non-equivariant groups.  We recall that the equivariant and non-equivariant homotopy groups of a $C_2$-spectrum $E$ can be combined into a Mackey functor, which we denote by $\underline{\pi}_{p,q}E$. 

In the $C_2$-equivariant category, the analog of the motivic algebraic cobordism spectrum is the real bordism spectrum $M\R$ constructed by Araki, Landweber, and Hu--Kriz \cite{hk_real}. This is a real oriented ring spectrum, and hence admits a map $\pi_{2k}\MU \cong L_{2k} \to \pi_{2k,k}^{C_2}M\R$ from the Lazard ring, which is in fact an isomorphism \cite[Theorem 2.28]{hk_real}. It follows that we can form quotients and localization of $M\R$ analogous to those considered for $\MGL$ in \Cref{sec:quotlocMGL}, and hence there is a notion of a localized quotient of $M\R$. 
\begin{exmp}\label{exmp:realktheory}
  The spectrum $K\R$ defined by $a_1^{-1}M\R/(a_2,a_2,\ldots)$ agrees with Atiyah's real $K$-theory spectrum \cite[Theorem 3.18]{hk_real}. 
\end{exmp}
\subsection{The Hill--Hopkins--Ravenel slice filtration}

The Hill--Hopkins--Ravenel slice filtration starts by defining the following \emph{slice cells}:
\begin{enumerate}
  \item $S^{2q,q\sigma}$ of dimension $2q$, 
  \item $S^{2q-1,q\sigma}$ of dimension $2q-1$, and
  \item $S^{q} \otimes (C_2)_+$ of dimension $q$. 
\end{enumerate}
Note that the dimension always corresponds to dimension of the underlying non-equivariant sphere. We then define $\Sigma^k\SH(C_2)$ to be the smallest full subcategory of $\SH(C_2)$ closed under extensions and colimits containing the slice cells of dimension $\ge q$. This gives rise to the Hill--Hopkins--Ravenel equivariant slice filtration
\[
\cdots \subset \Sigma^{q+1} \SH(C_2)^{\hhr} \subset \Sigma^{q} \SH(C_2)^{\hhr} \subset \cdots.
\]
Note that these categories are not localizing (in that sense, they are closer to the very effective motivic slice filtration than the effective one)

Following standard equivariant notation we denote $P_{q}X$ for the associated colocalization functor, and $P^{q-1}X$ for the localization functors, and $P_q^qX$ for the $n$-th slice; that is, there are functorial fiber sequences
\[
P_{q}X \to X \to P^{q-1}X
\]
and
\[
P_{q+1}X \to P_qX \to P_q^qX
\]
where $P_qX \in \Sigma^q \SH(C_2)^{\hhr}$, $P^{q-1}X \in (\Sigma^q \SH(C_2)^{\hhr})^{\perp}$, and $P_q^qX \in (\Sigma^{q+1}\SH(C_2)^{\hhr})^{\perp}$. 
\begin{rem}\label{rem:hhrvsreg}
  The regular slice filtration, first introduced by Ullman \cite{ullman_thesis,ullman_agt} uses only the first and third slice cells above. Let us denote the full subcategories generated under colimits and extensions by the regular slice cells of dimension $\ge q$ by $\Sigma^q \SH(C_2)^{\reg}$, and the associated functors by $\widetilde{P}_q,\widetilde{P}^q$, and $\widetilde{P}^q_q$. 
  It is easy to see \cite[Prop.3-4]{ullman_agt} (or as a consequence of \Cref{exmp:multslices}) that if $P_qX \in \Sigma^q \SH(C_2)^{\reg}$ for all $q \in \Z$, then there are equivalences $\widetilde{P}_qX \xr{\simeq} P_qX$, and $\widetilde{P}_q^qX \xr{\simeq} P_q^qX$. As we shall see, this is always the case when the slice tower of a motivic spectrum is compatible with Betti realization.  
\end{rem}
We say that a spectrum $X$ is a $q$-slice if $P_q^qX \simeq X$. In particular, if $X$ is a $q$-slice, then $X \in (\Sigma^{q+1}\SH(C_2)^{\hhr})^{\perp}$. It is known that a spectrum is a 0-slice if and only if it is of the form $H\underline{M}$ for $\underline{M}$ a Mackey functor whose restriction maps are all monomorphisms \cite[Proposition 4.50(ii)]{hhr}. Since $P^{q+2k}_{q+2k}(\Sigma^{2k,k\sigma}X) \simeq \Sigma^{2k,k\sigma}P_q^q(X)$ (\cite[Theorem 2.18]{hill_primer} or as a consequence of \Cref{lem:invertiblecomm}) we deduce the following. 
\begin{prop}\label{prop:HMorthogonal}
  Let $\underline{M}$ be a constant Mackey functor, then
  \[
  \Sigma^{2q,q\sigma}H\underline{M} \in (\Sigma^{2q+1}\SH(C_2)^{\hhr})^{\perp}.\] 
\end{prop}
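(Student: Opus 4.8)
The plan is to reduce \Cref{prop:HMorthogonal} to the known $q=0$ case via the shift formula for slices. First I would recall that, by \cite[Proposition 4.50(ii)]{hhr}, a $C_2$-spectrum is a $0$-slice if and only if it has the form $H\underline{N}$ for a Mackey functor $\underline{N}$ whose restriction maps are monomorphisms; in particular, if $\underline{M}$ is a \emph{constant} Mackey functor, then all its restriction maps are isomorphisms, hence monomorphisms, so $H\underline{M}$ is a $0$-slice. Equivalently, $P_0^0(H\underline{M}) \simeq H\underline{M}$, and by the definition of a $q$-slice this gives $H\underline{M} \in (\Sigma^{1}\SH(C_2)^{\hhr})^{\perp}$.

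Next I would invoke the shift formula $P^{q+2k}_{q+2k}(\Sigma^{2k,k\sigma}X) \simeq \Sigma^{2k,k\sigma}P_q^q(X)$ recalled just before the statement (which follows from \cite[Theorem 2.18]{hill_primer}, or alternatively from \Cref{lem:invertiblecomm} applied to the invertible object $A = S^{2,\sigma}$, which shifts the slice filtration by $2$). Applying this with $X = H\underline{M}$, $q=0$, and $k=q$ yields
\[
P^{2q}_{2q}(\Sigma^{2q,q\sigma}H\underline{M}) \simeq \Sigma^{2q,q\sigma} P_0^0(H\underline{M}) \simeq \Sigma^{2q,q\sigma}H\underline{M}.
\]
Thus $\Sigma^{2q,q\sigma}H\underline{M}$ is a $2q$-slice, and therefore, as noted in the paragraph preceding the proposition, it lies in $(\Sigma^{2q+1}\SH(C_2)^{\hhr})^{\perp}$, which is exactly the claim.

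The only genuine content is the identification of $\Sigma^{2q,q\sigma}H\underline{M}$ as a slice, and even that is essentially bookkeeping: the characterization of $0$-slices and the behavior of slices under $\Sigma^{2,\sigma}$-suspension are both cited. The main (minor) obstacle is simply making sure the hypothesis in \cite[Proposition 4.50(ii)]{hhr} is met — that is, confirming that a constant Mackey functor has monomorphic restriction maps — which is immediate since its restriction maps are identities. No subtlety about non-localizing subcategories or the failure of multiplicativity of the slice filtration intervenes here, because we only use the orthogonality complement and the clean $\Sigma^{2,\sigma}$-periodicity of the slice cells.
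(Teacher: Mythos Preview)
Your proof is correct and follows exactly the argument the paper gives in the paragraph immediately preceding the proposition: identify $H\underline{M}$ as a $0$-slice via \cite[Proposition 4.50(ii)]{hhr}, then apply the shift formula $P^{q+2k}_{q+2k}(\Sigma^{2k,k\sigma}X) \simeq \Sigma^{2k,k\sigma}P_q^q(X)$ to conclude that $\Sigma^{2q,q\sigma}H\underline{M}$ is a $2q$-slice and hence lies in $(\Sigma^{2q+1}\SH(C_2)^{\hhr})^{\perp}$. You have simply made the implicit step (that constant Mackey functors have monomorphic restriction maps) explicit.
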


For a $C_2$-equivariant spectrum $E$, the odd and even slices can be completely described in terms of $\underline{\pi}_{\ast,\ast}(E)$.  Given a $C_2$-Mackey functor $\underline{M}$, we let $P^0\underline{M}$ denote the maximal quotient of $\underline{M}$ for which the restriction map $\underline{M}(C_2/C_2) \to \underline{M}(C_2/e)$ is a monomorphism. The following is then a combination of \cite[Proposition 4.20 and Lemma 4.23]{hhr} and \cite[Corollary 2.16]{hill_primer}.
\begin{prop}\label{prop:c2slices}
  For a $C_2$-equivariant spectrum $E$ the slices are given by 
  \[
P_{2q}^{2q}(E) \simeq  \Sigma^{2q,q\sigma} HP^0 \underline\pi_{2q,q}(E)
  \quad \text{and} \quad P_{2q-1}^{2q-1}(E) \simeq \Sigma^{2q-1,q\sigma}H\underline \pi_{2q-1,q}E.
  \]
  \end{prop}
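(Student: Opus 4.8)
The plan is to reduce the computation of $P^q_q(E)$ to the known fact that $0$-slices are precisely Eilenberg--MacLane spectra on Mackey functors with monomorphic restrictions, using the twisted-suspension invariance of slices recorded in \Cref{lem:invertiblecomm} to move between $q$-slices and $0$-slices. First I would recall the general recognition principle: by \Cref{lem:recognitionconn}, to identify $P^q_q(E)$ it suffices to produce a spectrum of the form $\Sigma^{2q,q\sigma}H\underline{N}$ (or $\Sigma^{2q-1,q\sigma}H\underline{N}$ in the odd case) lying in $(\Sigma^{q+1}\SH(C_2)^{\hhr})^{\perp}$ together with a map from $P_qE$ realizing it as the cofiber of $P_{q+1}E \to P_qE$, i.e.\ exhibiting the right $\underline{\pi}$-groups. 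So the two things to pin down are: (i) which Mackey functor appears, and (ii) why the candidate is a $q$-slice.

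For the even case $q=2q'$ (writing $n=2q'$ to match the statement's index $2q$), I would argue as follows. Since $P^{2q}_{2q}E$ is a $2q$-slice, by \Cref{lem:invertiblecomm} (applied to the invertible object $A=S^{2,\sigma}=\Sigma^{2,\sigma}\unit$, which shifts the slice filtration by $2$) the spectrum $\Sigma^{-2q,-q\sigma}P^{2q}_{2q}E$ is a $0$-slice, hence of the form $H\underline{M}$ for a Mackey functor $\underline{M}$ whose restriction map is a monomorphism, by \cite[Proposition 4.50(ii)]{hhr}. Thus $P^{2q}_{2q}E\simeq \Sigma^{2q,q\sigma}H\underline{M}$. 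It remains to identify $\underline{M}$. Taking $\underline{\pi}_{2q,q}(-)$ of the slice cofiber sequence $P_{2q+1}E\to P_{2q}E\to P^{2q}_{2q}E$ and using that $P_{2q+1}E$ is built from slice cells of dimension $\ge 2q+1$ (so contributes nothing in the relevant bidegree on the nose, while the connectivity/coconnectivity of the tower forces $\underline{\pi}_{2q,q}P_{2q}E\cong\underline{\pi}_{2q,q}E$), one gets $\underline{\pi}_{2q,q}P^{2q}_{2q}E\cong$ a quotient of $\underline{\pi}_{2q,q}E$. On the other hand $\underline{\pi}_{2q,q}\Sigma^{2q,q\sigma}H\underline{M}\cong \underline{\pi}_{0,0}H\underline{M}\cong\underline{M}$, and the universal property of the map $\underline{\pi}_{2q,q}E\twoheadrightarrow\underline{M}$ — namely that it is the closest quotient with monomorphic restriction — is exactly the definition of $P^0$. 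This gives $\underline{M}\cong P^0\underline{\pi}_{2q,q}E$, which is the content of \cite[Proposition 4.20 and Lemma 4.23]{hhr} and \cite[Corollary 2.16]{hill_primer}; I would cite those rather than redo the diagram chase. The odd case is formally identical: $\Sigma^{-(2q-1),-q\sigma}P^{2q-1}_{2q-1}E$ is a $(-1)$-slice, and by \cite[Corollary 2.16]{hill_primer} every $(-1)$-slice is of the form $\Sigma^{-1}H\underline{N}$ for an arbitrary Mackey functor $\underline{N}$ (no monomorphicity constraint — the dimension $2q-1$ slice cell $S^{2q-1,q\sigma}$ accounts for this), so $P^{2q-1}_{2q-1}E\simeq\Sigma^{2q-1,q\sigma}H\underline{N}$ and the same $\underline{\pi}$-bookkeeping forces $\underline{N}\cong\underline{\pi}_{2q-1,q}E$ with no quotient needed.

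The main obstacle, and the step I would be most careful about, is the bookkeeping that $\underline{\pi}_{2q,q}P_{2q}E\cong\underline{\pi}_{2q,q}E$ and $\underline{\pi}_{2q,q}P_{2q+1}E=0$ (resp.\ the odd analogues): this requires knowing the precise range of bigraded homotopy groups in which a spectrum built from slice cells of dimension $\ge m$ can be nonzero, together with the dual statement for $P^{m-1}E\in(\Sigma^m\SH(C_2)^{\hhr})^\perp$. These are exactly the "slice connectivity" estimates of \cite[Section 4]{hhr}, and the cleanest route is simply to invoke \cite[Proposition 4.20 and Lemma 4.23]{hhr} and \cite[Corollary 2.16]{hill_primer} as black boxes — indeed the statement is essentially a repackaging of those results via the twisted-suspension shift, so the proof is short:
\begin{proof}
  By \Cref{lem:invertiblecomm} applied with $A=\Sigma^{2,\sigma}\unit$, the functor $\Sigma^{2q,q\sigma}(-)$ carries $0$-slices to $2q$-slices and $(-1)$-slices to $(2q-1)$-slices, and this operation is invertible. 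Combining \cite[Proposition 4.20 and Lemma 4.23]{hhr} with \cite[Corollary 2.16]{hill_primer} identifies the $2q$-th and $(2q-1)$-st slices of $E$ as the twisted suspensions of, respectively, $HP^0\underline{\pi}_{2q,q}(E)$ and $H\underline{\pi}_{2q-1,q}(E)$; the shift matches because $\Sigma^{2q,q\sigma}$ sends $\underline{\pi}_{0,0}$ to $\underline{\pi}_{2q,q}$. The result follows.
\end{proof}
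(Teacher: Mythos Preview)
Your proposal is correct and aligned with the paper: the paper does not give a proof of this proposition at all, but simply records it as a direct combination of \cite[Proposition 4.20 and Lemma 4.23]{hhr} and \cite[Corollary 2.16]{hill_primer}, which are exactly the references your formal proof invokes (together with the twisted-suspension shift, which the paper notes just before the statement). One small slip in your informal discussion: the sentence ``$\Sigma^{-(2q-1),-q\sigma}P^{2q-1}_{2q-1}E$ is a $(-1)$-slice'' is off --- the shift coming from \Cref{lem:invertiblecomm} with $A=\Sigma^{2,\sigma}\unit$ is $\Sigma^{-2q,-q\sigma}$, which sends a $(2q-1)$-slice to a $(-1)$-slice $\simeq \Sigma^{-1}H\underline{N}$, and then $\Sigma^{2q,q\sigma}\Sigma^{-1}H\underline{N}\simeq\Sigma^{2q-1,q\sigma}H\underline{N}$ --- but your final displayed proof uses the correct shift, so this does not affect the argument.
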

Because of this, Hill and Meier \cite{hill_meier} introduced the notion of even and strongly even $C_2$-equivariant spectra. 

\begin{defn}[Hill--Meier]
  A $C_2$-spectrum $E$ is even if $\underline \pi_{2q-1,q}E = 0$ for all $q$. It is strongly even if it is even and $\underline \pi_{2q,q}E$ is a constant Mackey functor for all $q \in \Z$, i.e., if the restriction
  \[
\pi_{2q,q}^{C_2}E \to \pi_{2q,q}^eE \cong \pi_{2q}E
  \]
is an isomorphism. 
\end{defn}

In particular, if $E$ is even, then the odd slices vanish (and conversely). Many spectra, such as $BP\R$ and $M\R$ are known to be even (in fact, strongly even) \cite{hill_meier}. Because of this we will study variants of the Hill--Hopkins--Ravenel slice tower, which we refer to as the even and odd slices towers. 

 For a general $C_2$-equivariant spectrum $E$, we say that the \emph{even slice tower} is the tower
\[ \setlength\mathsurround{0pt}
\begin{tikzcd}
 \cdots \arrow{r} & P_{2q+2}E \arrow{d} \arrow{r} & \arrow{d} P_{2q}E \arrow{r} & \arrow{d} P_{2q-2}E \arrow{r} & \cdots \\
  & \overline{P_{2q+2}^{2q+2}}E & \overline{P_{2q}^{2q}}E & \overline{P_{2q-2}^{2q-2}}E,
\end{tikzcd}
\]
where $P_{2q+2}E \to P_{2q}E$ is the composite $P_{2q+2}E \to P_{2q+1}E \to P_{2q}E$, and each $P_{2q+2}E \to P_{2q}E \to \overline{P_{2q}^{2q}}E$ is a cofiber sequence. We call the collection $\{ \Sigma^{2q} \SH(C_2)^{\hhr} \}_{q \in \Z}$ the even slice filtration. Clearly if a motivic spectrum is even, then the even slice tower is just the usual regular slice tower where we have removed the (contractible) odd slices. Similarly, we can define the \emph{odd slice tower} and the odd slice filtration by using only the odd $P_{2q+1}$. 

\subsection{Betti realization and slices}

Given a scheme over $\mathbb{R}$, the associated analytic space $X(\mathbb{C})$ is a $C_2$-space, where the $C_2$-action arises from complex conjugation. As shown in \cite[Section 4]{heller_ormsby} for example, this leads to a stable realization functor $\SH(\R) \to \SH(C_2)$. Given $k \subseteq \R$, we can compose with the base-change functor $\SH(k) \to \SH(\R)$ to define a realization functor, which we denote by $\Re$. We record its fundamental properties \cite[Proposition 4.8]{heller_ormsby}. 
\begin{thm}(Heller--Ormsby)\label{thm:bettirealization}
  Let $k \subseteq \R$ be a field, then there is a strong symmetric monoidal, cocontinuous functor $Re \colon \SH(k) \to \SH(C_2)$. Moreover, the functor satisfies $\Re(S^{p,q}) \simeq S^{p,q\sigma}$. 
\end{thm}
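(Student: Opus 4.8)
The plan is to obtain $\Re$ by realizing the generators of $\SH(k)$ and invoking a universal property, so that the only genuine content is the compatibility of complex realization with the Nisnevich-local and $\mathbb{A}^1$-local structure. Since $k \subseteq \R$, the base-change functor $\SH(k) \to \SH(\R)$ is already strong symmetric monoidal and cocontinuous (it is a left adjoint, pullback along $\Spec \R \to \Spec k$), so it suffices to build $\Re$ over $\R$ and precompose.

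First I would consider the symmetric monoidal functor $\Sm/\R \to \SH(C_2)$ sending a smooth $\R$-scheme $X$ to $\Sigma^{\infty}_+ X(\mathbb{C})$, where $X(\mathbb{C})$ is the topological space of complex points equipped with its $C_2$-action by complex conjugation, and a morphism of schemes induces a $C_2$-equivariant continuous map. Monoidality is clear since $(X \times_{\R} Y)(\mathbb{C}) = X(\mathbb{C}) \times Y(\mathbb{C})$ equivariantly and $\Sigma^{\infty}_+$ is monoidal. By the universal property of the motivic stable homotopy category (see \cite[Corollary 1.2]{robalo}), producing a strong symmetric monoidal cocontinuous functor $\SH(\R) \to \SH(C_2)$ amounts to checking that this functor is $\mathbb{A}^1$-invariant, satisfies Nisnevich descent, and sends $\mathbb{P}^1$ to an invertible object (then Kan-extending to motivic spaces and stabilizing). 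The first point is immediate because $\mathbb{A}^1(\mathbb{C}) = \mathbb{C}$ is $C_2$-equivariantly contractible, and the third follows from the computation $\mathbb{P}^1(\mathbb{C}) = \mathbb{CP}^1 = S^{1+\sigma} = S^1 \wedge S^{\sigma}$, a representation sphere and hence invertible in $\SH(C_2)$.

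Nisnevich descent is the step I expect to be the main obstacle: one must show that an elementary distinguished Nisnevich square of smooth $\R$-schemes is carried by $(-)(\mathbb{C})$ to a homotopy pushout square of $C_2$-spaces. Since a map of $C_2$-spaces is an equivalence exactly when it is so on underlying spaces and on $C_2$-fixed points, and the fixed points of $X(\mathbb{C})$ are the real points $X(\R)$, this reduces to two non-equivariant statements — that complex realization and real realization each send distinguished squares to homotopy pushouts — which in turn rest on étale maps inducing local homeomorphisms on complex (resp.\ real) points compatibly with the open-immersion part of the square; this is the heart of the Heller--Ormsby argument. Granting descent, the universal property yields the desired strong symmetric monoidal cocontinuous $\Re \colon \SH(k) \to \SH(C_2)$. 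Finally, the formula $\Re(S^{p,q}) \simeq S^{p,q\sigma}$ follows by monoidality from the cases $\Re(S^{1,0}) \simeq S^1$ (realization of the constant simplicial circle) and $\Re(S^{1,1}) = \Re(\mathbb{G}_m) \simeq S^{\sigma}$ (the space $\mathbb{C}^{\times}$ deformation retracts $C_2$-equivariantly onto the unit circle, on which conjugation fixes $\pm 1$ and reflects, so it is $S^{\sigma}$), together with the decomposition $S^{p,q} \simeq (S^{1,0})^{\wedge(p-q)} \wedge (S^{1,1})^{\wedge q}$.
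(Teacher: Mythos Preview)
The paper does not actually prove this theorem: it is stated as a result of Heller--Ormsby and simply cited from \cite[Proposition 4.8]{heller_ormsby}. The only additional content the paper supplies is the remark immediately following, which notes that Heller and Ormsby construct the realization as a Quillen adjunction between model categories and then invokes \cite{mazel_gee} to pass to the associated $\infty$-categories.

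Your proposal is therefore not so much a comparison target as an independent proof, and it takes a genuinely different route from the cited source. Heller--Ormsby build the functor concretely at the level of model categories (presheaves of simplicial sets, motivic spectra as symmetric $\mathbb{P}^1$-spectra), verifying the Quillen conditions by hand; you instead invoke Robalo's universal property of $\SH(\R)$, reducing everything to checking $\mathbb{A}^1$-invariance, Nisnevich descent, and invertibility of $\mathbb{P}^1$ for the functor $X \mapsto \Sigma^{\infty}_+ X(\mathbb{C})$ on $\Sm/\R$. Your identification of Nisnevich descent as the crux, and the reduction to the non-equivariant statements for complex and real points via the fixed-point description of $C_2$-equivalences, is exactly right and is essentially what underlies the Heller--Ormsby argument as well. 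What your approach buys is a cleaner packaging that stays entirely within the $\infty$-categorical framework the paper otherwise adopts, avoiding the detour through Quillen adjunctions and \cite{mazel_gee}; what the model-categorical approach buys is that it predates Robalo and makes the levelwise nature of the construction explicit (which the paper uses, for instance, in the proof that $\Re(\MGL) \simeq M\R$). One small quibble: the reference \cite[Corollary 1.2]{robalo} as used in the paper is for the fact that $\SH(S)$ is stably monoidal, not for the universal property itself; you would want to point to Robalo's actual universal-property statement.
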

\sloppy By composing with the inclusion functor, Betti realization also gives rise to a functor $\Re \colon \SH(k)_{\cell} \to \SH(C_2)$. 
\begin{rem}
  The adjunction constructed by Heller and Ormsby is a Quillen adjunction between model categories. The main result of \cite{mazel_gee} shows that this gives rise to an adjunction between the associated $\infty$-categories. 
\end{rem}
The following is a folklore result, for which we are unable to find a reference in the literature.
\begin{prop}
  The $C_2$-equivariant realization of $\MGL$ is real bordism $\mathrm{M}\mathbb{R}$. 
\end{prop}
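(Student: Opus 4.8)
The plan is to reduce the statement to the known point-set description of both spectra as colimits of Thom spectra over classifying spaces, and then check that Betti realization identifies these presentations level by level. Recall from \eqref{eq:MGL} that $\MGL \simeq \colim_n \Sigma^{-2n,-n}\Sigma^\infty (BGL_n)^{\gamma_n^{\mathrm{mot}}}$, and that $M\R$ admits the analogous presentation $M\R \simeq \colim_n \Sigma^{-2n\rho}\Sigma^\infty (BU(n))^{\gamma_n}$ (with the $C_2$-action by complex conjugation and $\rho = 1 + \sigma$ the regular representation), by the Araki--Hu--Kriz construction \cite{hk_real}. Since $\Re$ is cocontinuous by \Cref{thm:bettirealization}, it commutes with the colimit and with $\Sigma^\infty$, and it sends $\Sigma^{2,1}$ to $\Sigma^{2,\sigma} = \Sigma^{\rho}$ (the realization of $\mathbb{P}^1$ is $S^{2,\sigma}$), so it suffices to produce, compatibly in $n$, a $C_2$-equivariant equivalence
\[
\Re\bigl((BGL_n)^{\gamma_n^{\mathrm{mot}}}\bigr)\simeq (BU(n))^{\gamma_n}
\]
of Thom spaces.

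First I would recall that the topological realization of the motivic space $BGL_n$ over $\R$, as a $C_2$-space, is the complex points $BGL_n(\mathbb{C})$ with complex conjugation action, and that this is $C_2$-equivariantly equivalent to $BU(n)$ with the complex-conjugation action (the standard model $BU(n) = \colim_m \mathrm{Gr}_n(\mathbb{C}^{m})$ realizes the infinite Grassmannian, matching the colimit model of $BGL_n$ used by Dugger--Isaksen, and these models agree as $C_2$-spaces). Next, realization is symmetric monoidal and takes motivic Thom spaces to equivariant Thom spaces: the motivic Thom space of a vector bundle $V \to X$ realizes to the equivariant Thom space of the realized bundle $\Re(V) \to \Re(X)$, where $\Re(V)$ is the underlying complex bundle equipped with the conjugation-compatible $C_2$-structure. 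Applying this to $\gamma_n^{\mathrm{mot}} \to BGL_n$ gives that its realization is the tautological complex bundle $\gamma_n \to BU(n)$ with its Real structure, and hence $\Re((BGL_n)^{\gamma_n^{\mathrm{mot}}}) \simeq (BU(n))^{\gamma_n}$ $C_2$-equivariantly. Finally, one checks that the bonding maps $\mathbb{P}^1 \otimes MGL_n \to MGL_{n+1}$ realize to the bonding maps $S^{\rho}\otimes (BU(n))^{\gamma_n}\to (BU(n+1))^{\gamma_{n+1}}$ of $M\R$, which again follows from functoriality of the Thom-space construction under the inclusions $BGL_n\hookrightarrow BGL_{n+1}$ and the identification $\Re(\mathbb{P}^1)\simeq S^{\rho}$. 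Assembling these, $\Re(\MGL)\simeq M\R$.

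The main obstacle, and the step deserving the most care, is the compatibility of the two model-category-level constructions: the Heller--Ormsby realization functor is defined via a specific point-set model (simplicial presheaves on $\Sm/\R$ realized to $C_2$-topological spaces), while $\MGL$ is built from a specific colimit presentation and $M\R$ from Hu--Kriz's orthogonal $C_2$-spectrum model. One must verify that the topological realization of the motivic space $BGL_n$ really is $BU(n)(\mathbb{C})$ as a $C_2$-space in a way compatible with the structure maps — this requires knowing that realization commutes with the relevant filtered colimits of smooth schemes (here the Grassmannians), which is part of the general theory, and that realization of a motivic Thom space is the expected equivariant Thom space, which, while folklore, needs the identification of the realization of a Thom space $\mathrm{Th}(V) = V/(V\setminus 0)$ with the equivariant mapping-cone model for the Thom space of $\Re(V)$. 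Once these point-set compatibilities are in place the argument is purely formal.
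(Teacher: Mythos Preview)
Your proposal is correct and follows essentially the same route as the paper: both arguments use the colimit presentation $\MGL \simeq \colim_n \Sigma^{-2n,-n}\Sigma^\infty (BGL_n)^{\gamma_n^{\mathrm{mot}}}$, push $\Re$ through the colimit and suspensions via \Cref{thm:bettirealization}, identify $\Re(BGL_n)\simeq BU(n)$ with the conjugation action, and then match the Thom spaces to obtain the Hu--Kriz/HHR model of $M\R$. Your version is slightly more explicit about the compatibility of bonding maps and the point-set subtleties of realizing Thom spaces, whereas the paper leaves these implicit and cites \cite[B.252]{hhr} for the final identification; neither difference is substantive.
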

\begin{proof}
Let $\gamma_n^{\text{mot}}$ denote the universal bundle over the Grassmannian $BGL_n$, so that 
\[
\MGL \simeq \colim_n \Sigma^{-2n,-n} \Sigma^\infty (BGL_n)^{\gamma_n^{\text{mot}}}
\] 
By \Cref{thm:bettirealization} we have 
\[
\Re(\MGL) \simeq \colim_n \Sigma^{-2n,-n\sigma} \Re(\Sigma^{\infty} (BGL_n^{\gamma_n^{\text{mot}}}))
\]
By construction, the realization functor is constructed levelwise, and so it commutes with the suspension spectrum. Moreover, by construction, $\Re(BGL_n) \simeq BGL_n(\C) \simeq BU(n)$ equipped with its natural complex conjugation $C_2$-action. Using \Cref{thm:bettirealization} again, it is then easy to see that $\Re(BGL_n^{\gamma_n^{\text{mot}}}) \simeq BU(n)^{\gamma_n^{\text{real}}}$. Together we see that
\[
\Re(\MGL) \simeq \colim_n \Sigma^{-2n,n\sigma} \Sigma^\infty (BU(n))^{\gamma_n^{\text{real}}},
\]
which is a model of $M\R$ by \cite[B.252]{hhr}.
\end{proof}
As noted, Hu and Kriz showed that the restriction maps $M\R_{2k,k} \to MU_{2k}$ are isomorphisms and so we can define localized quotients of $M\R$ analogous to the localized quotients of $\MGL$ constructed in \Cref{sec:quotlocMGL}. Given a localized quotient $E^{\text{mot}}$ of $\MGL$, we will denote by $E^{\text{equiv}}$ the corresponding localized quotient of $M\R$. The following is then a consequence of the fact that realization commutes with colimits. 
\begin{cor}
  The Betti realization of a localized quotient $E^{\text{mot}}$ of $\MGL$ is the corresponding localized quotient $E^{\text{equiv}}$ of $M\R$. 
\end{cor}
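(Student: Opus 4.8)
The plan is to observe that a localized quotient of $\MGL$ is assembled from $\MGL$ by a sequence of operations — forming the cofiber of multiplication by an element of the Lazard ring $L$, taking relative smash products over $\MGL$, passing to filtered colimits over finite subsets of $\N$, inverting integers coprime to $p$, inverting homogeneous polynomials in the $a_i$, and forming the cofiber of multiplication by $p$ — each of which is preserved by $\Re$. Indeed, by \Cref{thm:bettirealization} the functor $\Re\colon \SH(k)_{\cell} \to \SH(C_2)$ is cocontinuous, hence preserves all colimits (in particular filtered colimits and cofiber sequences) and is exact, and it is strong symmetric monoidal; consequently $\MGL$ is carried to the $E_\infty$-ring $\Re(\MGL)$, which the preceding proposition identifies with $M\R$, and $\Re$ induces a cocontinuous functor $\Mod_{\MGL}(\SH(k)_{\cell}) \to \Mod_{M\R}(\SH(C_2))$ commuting with relative tensor products. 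Together with $\Re(\Sigma^{2i,i}\MGL) \simeq \Sigma^{2i,i\sigma}M\R$ (from $\Re(S^{p,q}) \simeq S^{p,q\sigma}$), the only genuine input still needed is that $\Re$ is compatible with the Lazard-ring classifying maps, i.e., that the class $a_i \in \MGL_{2i,i}$ is sent to $a_i \in \pi^{C_2}_{2i,i}M\R$; that multiplication by an integer $n$ goes to multiplication by $n$ is automatic from additivity of $\Re$.

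Granting this compatibility, I would then run an induction through \Cref{def:chromatictype}. First one checks that $\Re(\MGL/a_i)$ is the cofiber of $\Re(a_i\colon \Sigma^{2i,i}\MGL \to \MGL) = (a_i\colon \Sigma^{2i,i\sigma}M\R \to M\R)$, which is the equivariantly-defined $M\R/a_i$; iterating, and using that $\Re$ preserves the relative smash products $\MGL/a_{i_1}\otimes_{\MGL}\cdots\otimes_{\MGL}\MGL/a_{i_k}$, gives $\Re(\MGL/(a_{i_1},\ldots,a_{i_k})) \simeq M\R/(a_{i_1},\ldots,a_{i_k})$. Passing to the filtered colimit over the finite subsets of an arbitrary $\mathcal{I}\subset \N$ yields $\Re(\MGL/(\mathcal{I})) \simeq M\R/(\mathcal{I})$. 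Since $\MGL_{(p)} \simeq \colim(\MGL \xr{n} \MGL)$ over the filtered diagram of integers $n$ coprime to $p$, and $\Re$ is additive and cocontinuous, $\Re(\MGL_{(p)}) \simeq M\R_{(p)}$, and the previous steps applied after $p$-localization handle $\MGL_{(p)}/(\mathcal{I})$ and $\MGL/(\mathcal{I},p)$ (the latter being the cofiber of multiplication by $p$). Finally, inverting a collection $\mathcal{I}_0$ of homogeneous polynomials in the $a_i$ is again a filtered colimit of multiplication maps of $\MGL$-modules, which $\Re$ sends to the corresponding multiplication maps on $M\R$-modules, so $\Re(\MGL/(\mathcal{I})[\mathcal{I}_0^{-1}]) \simeq M\R/(\mathcal{I})[\mathcal{I}_0^{-1}]$, and likewise in the $p$-local and mod $p$ cases. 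Assembling these cases gives $\Re(E^{\text{mot}}) \simeq E^{\text{equiv}}$ for every localized quotient.

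The one point requiring genuine input — and the step I expect to be the main obstacle — is the compatibility of $\Re$ with the classifying maps $L \to \MGL_{*,*}$ and $L \to \pi^{C_2}_{*,*}M\R$. Both are determined by the formal group law attached to the canonical orientation, so it suffices to check that the equivalence $\Re(\MGL) \simeq M\R$ of the preceding proposition respects orientations, namely that it carries the realization of the Thom-class orientation of $\MGL$ to the standard real orientation of $M\R$. This follows by unwinding the proof of that proposition: $\Re$ takes $\mathbb{P}^\infty_k$ to $\mathbb{CP}^\infty$ with its complex-conjugation $C_2$-action and, being symmetric monoidal, carries the Thom classes of the bundles $\gamma_n^{\mathrm{mot}}$ to those of $\gamma_n^{\mathrm{real}}$ compatibly with the maps $BGL_n \to BGL_{n+1}$ used to build $\MGL$, so that $M\R = \Re(\MGL)$ acquires its standard real orientation. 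With the orientations matched, the universal property of $L$ gives $\Re(a_i) = a_i$, and the formal bookkeeping of the second paragraph then completes the argument.
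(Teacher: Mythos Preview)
Your proposal is correct and takes the same approach as the paper, whose entire proof is the one-line observation that realization commutes with colimits. You are more careful than the paper in isolating the one nontrivial point --- that $\Re(a_i)=a_i$, equivalently that the equivalence $\Re(\MGL)\simeq M\R$ is compatible with the canonical orientations --- which the paper leaves implicit.
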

\begin{rem}
  In the case where $E^{\text{mot}} = \BP\langle n \rangle$, this confirms the assumption made in \cite[Remark 3.15]{mbp_rationals}.
\end{rem}
We also have the following \cite[Theorem 4.17]{heller_ormsby}. 
\begin{prop}(Heller--Ormsby)\label{prop:bettiMA}
  Let $A$ be an abelian group, then $\Re(\M A) \simeq H\underline{A}$. 
\end{prop}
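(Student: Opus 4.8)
This is \cite[Theorem 4.17]{heller_ormsby}; here is the argument one would give. The strategy is to reduce to $A=\Z$ and then to recognize both $\Re(\M\Z)$ and $H\underline{\Z}$ as the spectrum assembled from equivariant symmetric products of representation spheres, via the equivariant Dold--Thom theorem.

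\emph{Reduction to $A = \Z$.} The assignment $A \mapsto \M A$ extends to an exact, colimit-preserving functor $\mathcal{D}(\Z) \to \SH(k)$; in particular $\M(\bigoplus_I \Z) \simeq \bigvee_I \M\Z$, and any short exact sequence $0 \to F_1 \to F_0 \to A \to 0$ with $F_0,F_1$ free yields a cofiber sequence $\M F_1 \to \M F_0 \to \M A$. Applying the exact, cocontinuous functor $\Re$ and comparing with the cofiber sequence $H\underline{F_1} \to H\underline{F_0} \to H\underline{A}$ — using that $A \mapsto \underline{A}$ is an exact functor into Mackey functors, that $\underline{M}\mapsto H\underline{M}$ sends short exact sequences of Mackey functors to cofiber sequences and preserves coproducts, and that any equivalence $\Re(\M\Z)\simeq H\underline{\Z}$ can be taken natural in maps of free abelian groups (both sides being additive functors) — one reduces to proving $\Re(\M\Z)\simeq H\underline{\Z}$, after which the free case follows by taking wedges and the general case by passing to cofibers of the resolution.

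\emph{The case $A = \Z$.} Over a field of characteristic zero, Voevodsky's symmetric-power model represents $\M\Z$ by the motivic spectrum whose $(2n,n)$-th space is the reduced free abelian group $\widetilde{\Z}[(\mathbb{P}^1)^{\wedge n}]$, with the evident bonding maps. Since Betti realization is built levelwise, is strong symmetric monoidal, and is cocontinuous, it commutes with the functor $\widetilde{\Z}[-]$, and by \Cref{thm:bettirealization} it sends $(\mathbb{P}^1)^{\wedge n}=S^{2n,n}$ to $S^{2n,n\sigma}=S^{n\rho}$, the sphere of the $n$-fold regular representation $\rho=1+\sigma$. Hence $\Re(\M\Z)$ is the $C_2$-spectrum whose $n\rho$-th space is $\widetilde{\Z}[S^{n\rho}]$. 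By the equivariant Dold--Thom theorem of dos Santos and Lima-Filho, $\underline{\pi}_\star\widetilde{\Z}[S^{n\rho}]$ computes reduced Bredon homology of $S^{n\rho}$ with constant coefficients $\underline{\Z}$; equivalently, $\widetilde{\Z}[S^{n\rho}]$ is a model for $K(\underline{\Z},n\rho)$. These spaces together with their bonding maps are exactly the structure spaces of $H\underline{\Z}$, so $\Re(\M\Z)\simeq H\underline{\Z}$.

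\emph{The main obstacle.} The delicate input is the first sentence of the previous paragraph: one must know that realizing Voevodsky's model of $\M\Z$ — which is built from presheaves with transfers, i.e. free abelian groups on $0$-cycles rather than on points — produces, after $\mathbb{A}^1$- and Nisnevich-localization, the topological model $n\mapsto\widetilde{\Z}[S^{n\rho}]$ \emph{with its genuine $C_2$-action}, not merely something equivalent after forgetting equivariance. This requires the real (i.e. $C_2$-equivariant) refinement of the Suslin--Voevodsky comparison between algebraic and singular chains, and is precisely the point-set content that Heller--Ormsby supply; it is also where the hypothesis $k\subseteq\R$ (so that the characteristic exponent is $1$) is used. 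Once this identification is in place the appeal to equivariant Dold--Thom is formal. An alternative would be to pin down $\Re(\M\Z)$ by isotropy separation — its underlying spectrum is the classical $\mathbb{C}$-realization $H\Z$ of $\M\Z$, and its geometric fixed points are the $\R$-realization of $\M\Z$ — and then match the gluing data with that of $H\underline{\Z}$; but controlling the gluing map seems no easier than the point-set analysis above.
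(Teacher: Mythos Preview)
The paper does not give a proof of this proposition at all: it is simply recorded with the attribution ``(Heller--Ormsby)'' and the reference \cite[Theorem 4.17]{heller_ormsby}, exactly as you begin your proposal. Your additional sketch of the argument --- reduction to $A=\Z$ via free resolutions, then identifying $\Re(\M\Z)$ with $H\underline{\Z}$ through the symmetric-power model and the equivariant Dold--Thom theorem --- is a faithful outline of Heller--Ormsby's actual proof, and you are right to flag that the real content lies in the equivariant refinement of the Suslin--Voevodsky comparison. So your proposal is correct and in fact supplies more than the paper does.
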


While simple, the following result motivates our use of the very effective cellular category, as it is not true for the effective cellular category. For example, since $S^{-1,0} \in \Sigma_T^0\SH(S)^{\eff}_{\cell}$, we have $S^{-1,0} \simeq f_q^{\Cell}(S^{-1,0})$, but the realization of $S^{-1,0}$ is not in $\Sigma^0 \SH(C_2)^{\reg}$. 
\begin{lem}\label{lem:cond2pel}
  Let $E \in \SH(k)_{\cell}$, then $\Re(\widetilde{f}_k^{\Cell}E) \in \Sigma^{2k} \SH(C_2)^{\reg} \subset \Sigma^{2k} \SH(C_2)^{\hhr}$. 
\end{lem}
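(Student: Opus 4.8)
The plan is to reduce to the generators of the very effective cellular slice filtration and exploit that $\Re$ is cocontinuous. By construction $\widetilde{f}_k^{\Cell}E$ lies in $\Sigma^k_T\SH(k)^{\veff}_{\cell}$, which is by definition the smallest full subcategory of $\SH(k)_{\cell}$ closed under colimits and extensions and containing $\cK^{\cell}_k = \{\Sigma^{2a,a}\unit \mid a \ge k\}$. So it is enough to prove that $\Re$ carries this whole subcategory into $\Sigma^{2k}\SH(C_2)^{\reg}$.

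First I would check that the preimage $\Re^{-1}\big(\Sigma^{2k}\SH(C_2)^{\reg}\big) \subseteq \SH(k)_{\cell}$ is closed under colimits and extensions. Closure under colimits holds because $\Re$ is cocontinuous (\Cref{thm:bettirealization}) and $\Sigma^{2k}\SH(C_2)^{\reg}$ is closed under colimits by definition; closure under extensions holds because $\Re$, being a cocontinuous functor between stable $\infty$-categories, is exact and hence preserves cofiber sequences, while $\Sigma^{2k}\SH(C_2)^{\reg}$ is closed under extensions by definition. Consequently, once we know that $\Re$ sends every generator $\Sigma^{2a,a}\unit$ with $a \ge k$ into $\Sigma^{2k}\SH(C_2)^{\reg}$, the minimality of $\Sigma^k_T\SH(k)^{\veff}_{\cell}$ forces $\Re\big(\Sigma^k_T\SH(k)^{\veff}_{\cell}\big) \subseteq \Sigma^{2k}\SH(C_2)^{\reg}$, and in particular $\Re(\widetilde{f}_k^{\Cell}E) \in \Sigma^{2k}\SH(C_2)^{\reg}$.

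It then remains only to evaluate $\Re$ on the generators. By \Cref{thm:bettirealization} we have $\Re(\Sigma^{2a,a}\unit) \simeq S^{2a,a\sigma}$, which for $a \ge k$ is exactly a regular slice cell of dimension $2a \ge 2k$, hence an object of $\Sigma^{2k}\SH(C_2)^{\reg}$. The final inclusion $\Sigma^{2k}\SH(C_2)^{\reg} \subseteq \Sigma^{2k}\SH(C_2)^{\hhr}$ is immediate, since the regular slice cells of a given dimension form a subcollection of the Hill--Hopkins--Ravenel slice cells of that dimension. There is no real obstacle here; the only point that requires any care is the bookkeeping in the closure argument — that preservation of colimits and extensions, together with the ``generated under colimits and extensions'' description of $\Sigma^k_T\SH(k)^{\veff}_{\cell}$, really does pin the image down — together with the (formal) fact that a cocontinuous functor between stable $\infty$-categories is exact.
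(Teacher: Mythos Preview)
Your proof is correct and follows essentially the same approach as the paper: both arguments consider the full subcategory $\cal{U}=\Re^{-1}\big(\Sigma^{2k}\SH(C_2)^{\reg}\big)$, verify that it is closed under colimits and extensions using cocontinuity of $\Re$, and then check that the generating spheres $\Sigma^{2a,a}\unit$ for $a\ge k$ lie in it. Your write-up is in fact slightly more careful about the closure under extensions step and about the final inclusion $\Sigma^{2k}\SH(C_2)^{\reg}\subseteq\Sigma^{2k}\SH(C_2)^{\hhr}$.
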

\begin{proof}
  Consider the full subcategory $\cal{U} \subseteq \SH(S)_{\cell}$ consisting of those objects  $U \in \SH(S)_{\cell}$ for which $\Re(U) \in  \Sigma^{2k} \SH(C_2)^{\reg}$. The subcategory $\cal{U}$ is clearly thick, and because $\Re$ preserves colimits (\Cref{thm:bettirealization}) and $\Sigma^{2k}\SH(C_2)^{\reg}$ is closed under colimits, we see that $\cal{U}$ is closed under colimits as well. Moreover, \Cref{thm:bettirealization} again implies that the spheres $S^{2a,a}$ for $a > k$ are in $\cal{U}$. It follows that $\Sigma_T^k\SH(S)_{\cell}^{\veff} \subseteq \cal{U}$, and hence
  \[
\Re(\Sigma^k_T\SH(S)^{\veff}_{\cell}) \subseteq \Re(\cal{U}) \subseteq \Sigma^{2k} \SH(C_2)^{\reg}.
  \]
  In particular,  for $E \in \SH(k)_{\cell}$, we have $\Re(\widetilde{f}_k^{\Cell}E) \in \Sigma^{2k} \SH(C_2)^{\reg}$, as required.
\end{proof}

We are now in a position to apply Pelaez's theorem to compare the Betti realization of the slice tower of a motivic spectrum to both the even and odd slice filtrations of its Betti realization. 
\begin{thm}\label{thm:veffcellc2compthm}
  Let $E \in \SH(k)_{\cell}$ be a cellular motivic spectrum. If $\Re(\widetilde{s}_q^{\Cell}E) \in (\Sigma^{2q+1}\SH(C_2)^{\hhr})^{\perp}$ for all $q \in \Z$, then:
  \begin{enumerate}
    \item The functors \[\Re \colon (\SH(k)_{\cell},\Sigma^q_T \SH(k)_{\cell}^{\veff}) \to (\SH(C_2),\Sigma^{2q}\SH(C_2)^{\hhr})\] and \[\Re \colon (\SH(k)_{\cell},\Sigma^q_T \SH(k)_{\cell}^{\veff}) \to (\SH(C_2),\Sigma^{2q-1}\SH(C_2)^{\hhr})\]
    are compatible with the slice filtration at $E$. In particular, there are equivalences 
    \[
\Re(\widetilde{f}^{\Cell}_qE) \xr{\simeq} P_{2q}\Re(E) \xr{\simeq} P_{2q-1}\Re(E)
    \]
    so that 
    \[
P_i^i\Re(E) \simeq 
\begin{cases}
\Re(\widetilde{s}_q^{\Cell}E) & i = 2q \\
0 & \text{otherwise}
\end{cases}
    \]
    for all $q$.
    \item The $C_2$-spectrum $\Re(E)$ is even, so that $\underline{\pi}_{2q-1,q}\Re(E) = 0$ for all $q \in \Z$. 
    \item The regular and Hill--Hopkins--Ravenel slice towers of $\Re(E)$ are equivalent. 
  \end{enumerate}
\end{thm}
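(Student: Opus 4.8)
The plan is to derive everything from Pelaez's theorem (\Cref{thm:pelaez}) applied to the Betti realization functor $\Re \colon \SH(k)_{\cell} \to \SH(C_2)$, where the source carries the very effective cellular slice filtration $\{\Sigma^q_T \SH(k)^{\veff}_{\cell}\}$ and the target carries either the even filtration $\{\Sigma^{2q}\SH(C_2)^{\hhr}\}$ or the odd filtration $\{\Sigma^{2q-1}\SH(C_2)^{\hhr}\}$. First I would check the three conditions preceding \Cref{thm:pelaez}. \hyperref[cond1]{Condition (1)} is the statement that $\Re$ commutes with the colimit decomposition of \Cref{lem:colimdecomp}; since $\Re$ is cocontinuous by \Cref{thm:bettirealization}, this is immediate. \hyperref[cond2]{Condition (2)}, namely that $\Re(\widetilde f^{\Cell}_q E) \in \Sigma^{2q}\SH(C_2)^{\hhr}$ (resp.\ $\Sigma^{2q-1}\SH(C_2)^{\hhr}$), is exactly \Cref{lem:cond2pel}, using that $\Sigma^{2q}\SH(C_2)^{\reg} \subseteq \Sigma^{2q}\SH(C_2)^{\hhr} \subseteq \Sigma^{2q-1}\SH(C_2)^{\hhr}$. \hyperref[cond3]{Condition (3)}, that $\Re(\widetilde s^{\Cell}_q E) \in (\Sigma^{2q+1}\SH(C_2)^{\hhr})^{\perp}$, is precisely the hypothesis of the theorem. (One should remark that the slice cells of the even and odd HHR filtrations still satisfy the axioms \ref{a1}--\ref{a6} of \Cref{defn:slicefiltration}, as noted for the full HHR filtration in the excerpt and because the subcategories are generated by slice cells which are compact and contain the unit in degree $0$; this is needed to legitimately invoke Pelaez's theorem in this generality.)

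With the three conditions verified, \Cref{thm:pelaez} gives that both functors are compatible with the slice filtration at $E$, which by definition means the maps $\alpha_q \colon \Re(\widetilde f^{\Cell}_q E) \to f^{\cD}_q \Re(E)$ and $\beta_q \colon \Re(\widetilde s^{\Cell}_q E) \to s^{\cD}_q \Re(E)$ are equivalences. For the even filtration, $f^{\cD}_q \Re(E) = P_{2q}\Re(E)$ and the associated $q$-th ``slice'' is the cofiber of $P_{2q+2}\Re(E) \to P_{2q}\Re(E)$, i.e.\ $\overline{P^{2q}_{2q}}\Re(E)$; for the odd filtration $f^{\cD}_q\Re(E) = P_{2q-1}\Re(E)$. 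Since $P_{2q}\Re(E) \to P_{2q-1}\Re(E)$ is part of the slice tower and both receive equivalences from $\Re(\widetilde f^{\Cell}_q E)$, we get the chain of equivalences $\Re(\widetilde f^{\Cell}_q E) \xr{\simeq} P_{2q}\Re(E) \xr{\simeq} P_{2q-1}\Re(E)$. Composing with $P_{2q-1}\Re(E) \to P_{2q-2}\Re(E)$ and comparing, the equivalence $P_{2q}\Re(E) \simeq P_{2q-1}\Re(E)$ forces the fiber of $P_{2q-1}\Re(E)\to P_{2q-2}\Re(E)$ --- namely $P^{2q-1}_{2q-1}\Re(E)$ --- to be contractible, so the odd slices vanish, and $P^{2q}_{2q}\Re(E) \simeq \overline{P^{2q}_{2q}}\Re(E) \simeq \Re(\widetilde s^{\Cell}_q E)$. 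This establishes part (1).

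Part (2) follows from part (1) together with \Cref{prop:c2slices}: the vanishing of $P^{2q-1}_{2q-1}\Re(E) \simeq \Sigma^{2q-1,q\sigma}H\underline\pi_{2q-1,q}\Re(E)$ for all $q$ forces $\underline\pi_{2q-1,q}\Re(E) = 0$ for all $q$, which is exactly the definition of $\Re(E)$ being even in the sense of Hill--Meier. Part (3) follows from \Cref{rem:hhrvsreg}: by \Cref{lem:cond2pel} we have $\Re(\widetilde f^{\Cell}_q E) \in \Sigma^{2q}\SH(C_2)^{\reg}$, so the equivalence $\Re(\widetilde f^{\Cell}_q E) \xr{\simeq} P_{2q}\Re(E)$ from part (1) shows $P_{2q}\Re(E) \in \Sigma^{2q}\SH(C_2)^{\reg}$; combined with the vanishing of the odd pieces $P_{2q-1}\Re(E) \simeq P_{2q}\Re(E)$ this shows $P_q\Re(E) \in \Sigma^q\SH(C_2)^{\reg}$ for every $q$, and then \Cref{rem:hhrvsreg} gives $\widetilde P_q \Re(E) \xr{\simeq} P_q\Re(E)$ and $\widetilde P^q_q\Re(E)\xr{\simeq} P^q_q\Re(E)$, i.e.\ the regular and HHR slice towers agree.

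The main obstacle is really the bookkeeping in part (1): making sure that the two a priori different applications of Pelaez's theorem (to the even and to the odd filtration) are compatible --- i.e.\ that the equivalences $\alpha_q$ for the two filtrations are identified under the canonical maps $P_{2q}\Re(E)\to P_{2q-1}\Re(E)$ and $P_{2q-1}\Re(E)\to P_{2q-2}\Re(E)$ of the HHR tower --- so that one genuinely gets the interleaving and can conclude the odd slices vanish rather than merely that two separate towers each map equivalently out of $\Re(\widetilde f^{\Cell}_\bullet E)$. This should follow from naturality of the comparison maps $\alpha_q$ with respect to inclusions of slice filtrations (the even filtration refines to a sub-filtration of the odd one in the relevant sense), essentially the argument of \Cref{exmp:multslices} applied inside $\SH(C_2)$; one just has to be slightly careful since these HHR-type filtrations are not localizing and their slice functors are not triangulated, so one argues with the characterizing universal properties of $P_q$ and $P^{q-1}$ recorded after \Cref{lem:recognitionconn} rather than with long exact sequences.
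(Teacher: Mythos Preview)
Your proposal is correct and follows essentially the same route as the paper: verify Pelaez's three conditions for $\Re$ against both the even and odd HHR filtrations (cocontinuity of $\Re$, \Cref{lem:cond2pel}, and the hypothesis), deduce $\Re(\widetilde f^{\Cell}_q E) \simeq P_{2q}\Re(E) \simeq P_{2q-1}\Re(E)$, and read off (2) and (3) from \Cref{prop:c2slices} and \Cref{rem:hhrvsreg}. Two small slips worth fixing: first, for the \emph{even} filtration Condition~(3) actually requires only $\Re(\widetilde s^{\Cell}_q E)\in(\Sigma^{2q+2}\SH(C_2)^{\hhr})^{\perp}$, which the paper obtains from the hypothesis via the inclusion $(\Sigma^{2q+1})^{\perp}\subset(\Sigma^{2q+2})^{\perp}$---your phrasing ``is precisely the hypothesis'' is literally true only for the odd filtration; second, $P^{2q-1}_{2q-1}\Re(E)$ is the \emph{cofiber} of $P_{2q}\Re(E)\to P_{2q-1}\Re(E)$, not the fiber of $P_{2q-1}\Re(E)\to P_{2q-2}\Re(E)$, so its contractibility is immediate from the equivalence you just established and no reference to $P_{2q-2}$ is needed. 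Your compatibility worry in the last paragraph is legitimate but resolves at once: both $\alpha_q$ maps compose with the respective counits to $\Re(E)$ to give $\Re$ of the motivic counit $\widetilde f^{\Cell}_q E\to E$, and the universal property of $P_{2q-1}$ then forces the triangle with the natural map $P_{2q}\Re(E)\to P_{2q-1}\Re(E)$ to commute; the paper leaves this implicit as well.
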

\begin{proof}
We first apply Pelaez's theorem to the functor \[\Re \colon (\SH(k)_{\cell},\Sigma^q_T \SH(k)_{\cell}^{\veff}) \to (\SH(C_2),(\Sigma^{2q}\SH(C_2)^{\hhr})).\] 

By \Cref{thm:bettirealization} and \Cref{lem:cond2pel} we see that \hyperref[cond1]{Condition (1)} and \hyperref[cond2]{Condition (2)} are satisfied. \hyperref[cond3]{Condition (3)} is the statement that $\Re(\widetilde s_q^{\Cell}E) \in (\Sigma^{2q+2}\SH(C_2)^{\hhr})^{\perp}$. Since $\Sigma^{2q+2}\SH(C_2)^{\hhr} \subset \Sigma^{2q+1}\SH(C_2)^{\hhr}$, we have $(\Sigma^{2q+1}\SH(C_2)^{\hhr})^{\perp} \subset (\Sigma^{2q+2}\SH(C_2)^{\hhr})^{\perp}$. By assumption we have $\Re(\widetilde s_q^{\Cell}E) \in (\Sigma^{2q+1}\SH(C_2)^{\hhr})^{\perp}$, so that \hyperref[cond3]{Condition (3)} is satisfied. 

A similar argument works for the functor 
\[\Re \colon (\SH(k)_{\cell},\Sigma^q_T \SH(k)_{\cell}^{\veff}) \to (\SH(C_2),(\Sigma^{2q-1}\SH(C_2)^{\hhr})).\] 
Since $\Sigma^{2q}\SH(C_2)^{\hhr} \subset \Sigma^{2q-1}\SH(C_2)^{\hhr}$, we can again use \Cref{thm:bettirealization} and \Cref{lem:cond2pel} to see that \hyperref[cond1]{Condition (1)} and \hyperref[cond2]{Condition (2)} are satisfied, and \hyperref[cond3]{Condition (3)}  is precisely the assumption of the theorem. 

By \Cref{thm:pelaez} we deduce that there are equivalences 
\[
\Re(\widetilde f^{\Cell}_qE) \xr{\simeq} P_{2q} \Re(E) \quad \text{ and } \quad \Re(\widetilde s^{\Cell}_qE) \xr{\simeq} \overline {P^{2q}_{2q}} \Re(E) 
\]
as well as 
\[
\Re(\widetilde f^{\Cell}_qE) \xr{\simeq} P_{2q-1} \Re(E) \quad \text{ and } \quad \Re(\widetilde s^{\Cell}_qE) \xr{\simeq} \overline {P^{2q-1}_{2q-1}} \Re(E).
\]
It follows that the natural map $P_{2q}\Re(E) \to P_{2q-1}\Re(E)$ is an equivalence, so that there are equivalences $\Re(\widetilde{f}_qE) \xr{\simeq} P_{2q}\Re(E) \xr{\simeq} P_{2q-1}\Re(E)$, as claimed. 

By construction, for any motivic spectrum $E$, there is a cofiber sequence
\[
P_{q+1}^{q+1}\Re(E) \to \overline {P_{q}^{q}}\Re(E) \to P_q^q\Re(E).
\]
If $q=2j-1$ is odd, then the previous paragraphs show that $P_{2j-1}^{2j-1}\Re(E) \simeq 0$, and $\Re(\widetilde s_j^{\Cell}E) \xr{\simeq} P_{2j}^{2j}\Re(E) \xr{\simeq} \overline {P_{2j-1}^{2j-1}}\Re(E)$. Similarly, if $q=2j$ is even, then $\Re(\widetilde s_j^{\Cell}E) \xr{\simeq} \overline {P_{2j}^{2j}}\Re(E) \xr{\simeq} P_{2j}^{2j}\Re(E)$, and $P_{2j-1}^{2j-1}\Re(E) \simeq 0$. In either case we see that the claimed formula for $P_i^{i}\Re(E)$ holds, and we have proved (1). 

Since the odd slices of $\Re(E)$ are contractible, it is even by definition, and by \Cref{prop:c2slices} we must have $\underline{\pi}_{2q-1,q}\Re(E) = 0$. This proves (2). 

To see that (3) holds, we must show that $P_q\Re(E) \in \Sigma^q \SH(C_2)^{\reg}$. Suppose $q=2j$ is even, then $P_{2j}\Re(E) \simeq \Re(\widetilde{f}_{j}^{\Cell}E) \in \Sigma^{2j}\SH(C_2)^{\reg}$ by (1) and \Cref{lem:cond2pel}. Similarly, if $q=2j-1$ is odd, then $P_{2j-1}\Re(E) \simeq P_{2j}\Re(E) \simeq \Re(\widetilde f_j^{\Cell}E) \in \Sigma^{2j}\SH(C_2)^{\reg} \subset \Sigma^{2j-1}\SH(C_2)^{\reg}$. This proves (3).  
\end{proof}
The condition that $\Re(\widetilde{s}_q^{\Cell}E) \in (\Sigma^{2q+1}\SH(C_2)^{\hhr})^{\perp}$ is stated more succinctly in the terminology of \cite{hhr} by the requirement that $\Re(\widetilde{S}_q^{\Cell}(E))$ is slice $(2q+1)$-null. This is a statement that can be checked by the vanishing of certain equivariant homotopy groups, see \cite[Lemma 4.14]{hhr}, however in practice this can be difficult to check. We will instead rely mainly on the following. 
\begin{cor}\label{cor:celldecomp}
  Let $E \in \SH(k)_{\cell}$ be a motivic spectrum. If, for each $q \in \Z$ there is an equivalence $\widetilde{s}_q^{\Cell}E \simeq \bigvee_{i \in I}\Sigma^{2q,q}\M A_i$, where each $A_i$ is an abelian group and $I$ is a finite indexing set, then the conditions of \Cref{thm:veffcellc2compthm} are satisfied. 
\end{cor}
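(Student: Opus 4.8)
The plan is to feed the hypothesized description of $\widetilde{s}_q^{\Cell}E$ through Betti realization and reduce to the orthogonality statement for shifted Eilenberg--Mac\,Lane spectra already recorded in \Cref{prop:HMorthogonal}. Concretely, the condition we must verify is that $\Re(\widetilde{s}_q^{\Cell}E) \in (\Sigma^{2q+1}\SH(C_2)^{\hhr})^{\perp}$ for every $q \in \Z$; granting this, \Cref{thm:veffcellc2compthm} applies verbatim.

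First I would apply $\Re$ to the given equivalence $\widetilde{s}_q^{\Cell}E \simeq \bigvee_{i \in I}\Sigma^{2q,q}\M A_i$. Since $\Re$ is cocontinuous (\Cref{thm:bettirealization}), it commutes with the wedge, and since it is strong symmetric monoidal with $\Re(S^{2q,q}) \simeq S^{2q,q\sigma}$, it carries $\Sigma^{2q,q}\M A_i$ to $\Sigma^{2q,q\sigma}\Re(\M A_i)$. By \Cref{prop:bettiMA} we have $\Re(\M A_i) \simeq H\underline{A_i}$, the Eilenberg--Mac\,Lane spectrum of the constant Mackey functor on $A_i$. Hence
\[
\Re(\widetilde{s}_q^{\Cell}E) \simeq \bigvee_{i \in I}\Sigma^{2q,q\sigma}H\underline{A_i}.
\]
By \Cref{prop:HMorthogonal}, each summand $\Sigma^{2q,q\sigma}H\underline{A_i}$ lies in $(\Sigma^{2q+1}\SH(C_2)^{\hhr})^{\perp}$.

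It remains to observe that this orthogonal complement is closed under the wedge appearing above. Here is where the finiteness of $I$ enters: in a stable $\infty$-category a finite coproduct agrees with the corresponding finite product, so $\bigvee_{i\in I}\Sigma^{2q,q\sigma}H\underline{A_i} \simeq \prod_{i \in I}\Sigma^{2q,q\sigma}H\underline{A_i}$, and any right-orthogonal complement $\cal{U}^{\perp}$ is closed under products (indeed under all limits). Therefore $\Re(\widetilde{s}_q^{\Cell}E) \in (\Sigma^{2q+1}\SH(C_2)^{\hhr})^{\perp}$, as needed, and the corollary follows. The argument is essentially formal; the only point requiring any care is precisely this reduction of the wedge to a product, which is why the statement is phrased with $I$ finite — for an infinite wedge one would instead need a separate argument that slice-$(2q+1)$-nullity is preserved under arbitrary coproducts.
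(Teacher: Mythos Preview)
Your proof is correct and follows essentially the same approach as the paper: apply $\Re$ using \Cref{thm:bettirealization} and \Cref{prop:bettiMA} to identify the realized slice as a finite wedge of $\Sigma^{2q,q\sigma}H\underline{A_i}$, invoke \Cref{prop:HMorthogonal} for each summand, and use that a finite coproduct agrees with a finite product together with closure of the orthogonal complement under limits. Your closing remark on why finiteness of $I$ is needed is exactly the point the paper is making.
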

\begin{proof}
Using \Cref{thm:bettirealization,prop:bettiMA} we see that $\Re(\widetilde s_q^{\Cell}E) \simeq \bigvee_{i \in I} \Sigma^{2q,q\sigma}H\underline{A}_i$. By \Cref{prop:HMorthogonal} we have $\Sigma^{2q,q\sigma}H\underline{A}_i \in (\Sigma^{2q+1}\SH(C_2)^{\hhr})^{\perp}$. Since the indexing set $I$ is finite, products and coproducts in $\SH(C_2)$ agree, and since $(\Sigma^{2q+1}\SH(C_2)^{\hhr})^{\perp}$ is closed under limits, we conclude that $\Re( \bigvee_{i \in I}\Sigma^{2q,q}\M A_i) \in (\Sigma^{2q+1}\SH(C_2)^{\hhr})^{\perp}$ as required. 
\end{proof}

By combining the previous theorem with work in the previous section we can give conditions when the effective slice filtration on a motivic spectrum $E$ is compatible with the Hill--Hopkins--Ravenel slice filtration on its realization $\Re(E)$. 
\begin{thm}\label{thm:effvshhr}
  Let $E$ be a motivic spectrum that satisfies the following conditions:
  \begin{enumerate}
    \item $E$ is cellular. 
    \item $f_qE \in \Sigma^q_T \SH(S)_{\cell}^{\veff}$ for all $q \in Z$.
    \item $\Re(s_qE) \in (\Sigma^{2q+1}\SH(C_2)^{\hhr})^{\perp}$ for all $q \in Z$. 
  \end{enumerate}
  Then, $\Re \colon (\SH(k),\Sigma^q_T\SH(S)^{\eff}) \to (\SH(C_2),\Sigma^{2q}\SH(C_2)^{\hhr})$ is compatible with the slice filtration at $E$, the $C_2$-spectrum $\Re(E)$ is even, and the regular and Hill--Hopkins--Ravenel slice towers of $\Re(E)$ agree. 
\end{thm}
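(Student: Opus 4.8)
The plan is to chain together the comparison results of the previous sections so as to reduce to \Cref{thm:veffcellc2compthm}. Since $k\subseteq\R$ has characteristic zero, the characteristic exponent is $c=1$, so that cellular and $c$-cellular objects coincide and every cellular comparison theorem applies to $E$ directly.

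First I would identify the effective slice tower of $E$ with its very effective cellular slice tower. As $E$ is cellular, \Cref{thm:effcellcompare} yields equivalences $f_q^{\Cell}E\xr{\simeq}f_qE$ and $s_q^{\Cell}E\xr{\simeq}s_qE$ for all $q\in\Z$ (applying $\iota$, which is harmless since $E$ and all its effective slices are cellular). Transporting assumption (2) along the first equivalence gives $f_q^{\Cell}E\in\Sigma^q_T\SH(S)^{\veff}_{\cell}$ for all $q$, so \Cref{lem:effvsveff} applies and produces equivalences $\widetilde f^{\Cell}_qE\xr{\simeq}f_q^{\Cell}E$ and $\widetilde s^{\Cell}_qE\xr{\simeq}s_q^{\Cell}E$. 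Composing, we obtain canonical equivalences $\widetilde f^{\Cell}_qE\simeq f_qE$ and $\widetilde s^{\Cell}_qE\simeq s_qE$ for every $q\in\Z$.

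Now assumption (3) together with $\widetilde s^{\Cell}_qE\simeq s_qE$ gives $\Re(\widetilde s^{\Cell}_qE)\in(\Sigma^{2q+1}\SH(C_2)^{\hhr})^{\perp}$ for all $q$, which is exactly the hypothesis of \Cref{thm:veffcellc2compthm}. That theorem then tells us that $\Re\colon(\SH(k)_{\cell},\Sigma^q_T\SH(k)^{\veff}_{\cell})\to(\SH(C_2),\Sigma^{2q}\SH(C_2)^{\hhr})$ is compatible with the slice filtration at $E$, with $\Re(\widetilde f^{\Cell}_qE)\simeq P_{2q}\Re(E)$ and $\Re(\widetilde s^{\Cell}_qE)\simeq\overline{P_{2q}^{2q}}\Re(E)$; that $\Re(E)$ is even; and that the regular and Hill--Hopkins--Ravenel slice towers of $\Re(E)$ agree. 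Splicing in the equivalences of the previous paragraph (and using that $\Re$ on $\SH(k)_{\cell}$ is $\Re\circ\iota$), we get $\Re(f_qE)\simeq\Re(\widetilde f^{\Cell}_qE)\simeq P_{2q}\Re(E)$ and likewise $\Re(s_qE)\simeq\overline{P_{2q}^{2q}}\Re(E)$ for all $q$; the remaining two assertions of the theorem are then immediate from \Cref{thm:veffcellc2compthm}.

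The one point needing care is that ``compatible with the slice filtration at $E$'' for $\Re\colon(\SH(k),\Sigma^q_T\SH(S)^{\eff})\to(\SH(C_2),\Sigma^{2q}\SH(C_2)^{\hhr})$ literally requires the \emph{canonical} maps $\alpha_q\colon\Re(f_qE)\to P_{2q}\Re(E)$ and $\beta_q\colon\Re(s_qE)\to\overline{P_{2q}^{2q}}\Re(E)$ of \Cref{sec:pelaez} to be equivalences, so one must check that the composite chain above represents $\alpha_q$ (resp.\ $\beta_q$). This follows because each intermediate equivalence is induced by the appropriate (co)unit and is compatible with the maps down to $E$, resp.\ $\Re(E)$, and with the defining cofiber sequences, so the composite satisfies the characterizing property of $\alpha_q$. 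Alternatively, one can verify Pelaez's \hyperref[cond1]{Condition (1)}--\hyperref[cond3]{Condition (3)} directly for this functor and appeal to \Cref{thm:pelaez}: \hyperref[cond1]{Condition (1)} holds since $\Re$ is cocontinuous and \Cref{lem:colimdecomp} applies; \hyperref[cond2]{Condition (2)} holds because assumption (2) and the argument in the proof of \Cref{lem:cond2pel} give $\Re(\Sigma^q_T\SH(S)^{\veff}_{\cell})\subseteq\Sigma^{2q}\SH(C_2)^{\hhr}$; and \hyperref[cond3]{Condition (3)} holds since $(\Sigma^{2q+1}\SH(C_2)^{\hhr})^{\perp}\subseteq(\Sigma^{2q+2}\SH(C_2)^{\hhr})^{\perp}$. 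Either way there is no serious obstacle beyond this bookkeeping, since the substantive content was already established in the earlier comparison theorems.
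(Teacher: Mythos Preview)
Your proof is correct and follows essentially the same approach as the paper, which simply says ``Combine \Cref{thm:effcellcompare,thm:effvsveff} with \Cref{thm:veffcellc2compthm}.'' You have supplied the details of how these results chain together, and your citation of \Cref{lem:effvsveff} in place of \Cref{thm:effvsveff} is in fact the more precise reference for a general $E$ satisfying (1)--(3).
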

\begin{proof}
  Combine \Cref{thm:effcellcompare,thm:effvsveff} with \Cref{thm:veffcellc2compthm}. 
\end{proof}
\subsection{Some examples}
In the following we let $E^{\text{mot}}$ denote a localized quotient of $\MGL$, and we let $A$ denote either $\Z,\Z_{(p)}$ or $\Z/p$ depending on if $E$ is a quotient and localization of $\MGL$, of $\MGL_{(p)}$, or of $\MGL_{(p)}/p$ respectively. Moreover, we write $E^{\text{top}}$ and $E^{\text{equiv}}$ for the corresponding motivic and topological spectrum. 
\begin{thm}\label{thm:comparasionthmchromatic}
  Let $E^{\text{mot}}$ be a localized quotient of $\MGL$, then $E^{\text{equiv}}$ satisfies the conditions of \Cref{thm:effvshhr}. The odd slices of $E^{\text{equiv}}$ are contractible, and 
  if there are only finitely many monomials in degree $2q$, then the even slices of $E^{\text{equiv}}$ are given by
  \[
P_{2q}^{2q}(E^{\text{equiv}}) \simeq \bigvee_I \Sigma^{2q,q\sigma} H\underline{A}  
  \]
 where the wedge is indexed by monomials of degree $2q$ in $E_{2q}^{\text{top}}$.
\end{thm}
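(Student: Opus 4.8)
The plan is to verify, for $E^{\text{mot}}$ a localized quotient of $\MGL$, the three hypotheses of \Cref{thm:effvshhr} and then read off the stated formulas from the combination of that theorem with the slice computations already in the paper. Conditions (1) and (2) are essentially available: localized quotients of $\MGL$ are cellular by \Cref{cor:cellularquotients} together with closure of $\SH(S)_{\cell}$ under the colimits and cofibers used in \Cref{sec:quotlocMGL}, and \Cref{thm:effvsveff} already establishes that $f_q(E^{\text{mot}}) \simeq \widetilde f_q^{\Cell}(E^{\text{mot}}) \in \Sigma^q_T\SH(S)^{\veff}_{\cell}$ for such $E$. So the real content is Condition (3), namely that $\Re(s_q E^{\text{mot}}) \in (\Sigma^{2q+1}\SH(C_2)^{\hhr})^\perp$.

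For Condition (3) I would apply \Cref{cor:celldecomp}: it suffices to show each very effective cellular slice has the form $\widetilde s_q^{\Cell}(E^{\text{mot}}) \simeq \bigvee_{i \in I} \Sigma^{2q,q}\M A_i$ with $I$ finite. By \Cref{thm:effvsveff} again, $\widetilde s_q^{\Cell}(E^{\text{mot}}) \simeq s_q(E^{\text{mot}})$, so I need to identify the effective motivic slices of a localized quotient of $\MGL$. First reduce to $q=0$ using \Cref{lem:shifts}; then the point is that for $M$ one of $\MGL/(\cal I)$, $\MGL_{(p)}/(\cal I)$, or $\MGL_{(p)}/(\cal I,p)$, \Cref{prop:lttechnicalassumption} gives $s_0(M) \simeq M/(\cal I^c)$, and this quotient of $\MGL$ by a regular sequence of Lazard generators has, by \cite[Proposition 4.13]{hoyois_hhm} (using $\MGL/(a_1,a_2,\ldots)[1/c]\simeq \M\Z[1/c]$), the homotopy type of a wedge of shifted motivic Eilenberg--MacLane spectra $\Sigma^{2q,q}\M A$, one summand per monomial of degree $2q$ in $A[\text{polynomial generators of } \cal I^c]$. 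The localizations are handled exactly as in the proof of \Cref{thm:effvsveff}: $f_q$ of a localization is a colimit of shifts $\Sigma^{-2k,-k}f_{q+k}M$, and the slice functor being exact (or using \Cref{lem:colimsq}) shows the slices of the localization are again such wedges. Finiteness of the indexing set in a fixed degree $2q$ is precisely the hypothesis "only finitely many monomials in degree $2q$"; in the $\MGL$, $\MGL_{(p)}$, $\MGL/p$ cases it holds automatically because the polynomial ring is finitely generated in each degree. This verifies the hypotheses of \Cref{cor:celldecomp}, hence Condition (3).

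With all three conditions of \Cref{thm:effvshhr} in hand, that theorem immediately yields that $\Re(E^{\text{mot}}) = E^{\text{equiv}}$ is even (so the odd slices vanish), that the regular and Hill--Hopkins--Ravenel towers agree, and that $\Re$ is compatible with the slice filtration at $E^{\text{mot}}$; in particular $P^{2q}_{2q}(E^{\text{equiv}}) \simeq \Re(s_q E^{\text{mot}})$. Combining with the identification $s_q E^{\text{mot}} \simeq \bigvee_I \Sigma^{2q,q}\M A$ from the previous paragraph and \Cref{prop:bettiMA} ($\Re(\M A) \simeq H\underline A$) together with \Cref{thm:bettirealization} ($\Re(S^{2q,q}) \simeq S^{2q,q\sigma}$) gives exactly $P^{2q}_{2q}(E^{\text{equiv}}) \simeq \bigvee_I \Sigma^{2q,q\sigma}H\underline A$ with $I$ indexed by monomials of degree $2q$ in $E^{\text{top}}_{2q}$. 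The main obstacle is the middle step: correctly identifying $s_q$ of an arbitrary localized quotient of $\MGL$ as a finite wedge of shifted $\M A$'s, i.e. assembling \Cref{prop:lttechnicalassumption}, the Hoyois/Spitzweck description of $f_q\MGL$, and the Levine--Tripathi colimit description of the localizations into a clean statement about the summands; the equivariant half is then essentially formal.
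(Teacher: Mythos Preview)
Your overall strategy matches the paper's, but there is a genuine gap in your verification of Condition~(3). You route this through \Cref{cor:celldecomp}, which explicitly requires the indexing set $I$ to be \emph{finite}. For a general localized quotient of $\MGL$---for instance $a_1^{-1}\MGL$---the abelian group $E^{\text{top}}_{2q}$ need not be finitely generated (in degree $0$ there are infinitely many monomials $a_1^{-k}\cdot(\text{degree }2k)$), so your argument only establishes Condition~(3) under the finiteness hypothesis that in the theorem is supposed to govern the wedge decomposition formula, not the claim that the hypotheses of \Cref{thm:effvshhr} hold. The theorem asserts the latter unconditionally.

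The paper avoids this by not decomposing the slice at all when checking Condition~(3): Levine--Tripathi give directly $s_q(E^{\text{mot}}) \simeq \Sigma^{2q,q}\M E^{\text{top}}_{2q}$ as a \emph{single} motivic Eilenberg--MacLane spectrum. Then \Cref{prop:bettiMA} yields $\Re(s_q E^{\text{mot}}) \simeq \Sigma^{2q,q\sigma}H\underline{E^{\text{top}}_{2q}}$, and \Cref{prop:HMorthogonal} (valid for an arbitrary constant Mackey functor, finitely generated or not) places this in $(\Sigma^{2q+1}\SH(C_2)^{\hhr})^\perp$. Only afterwards, and only under the finiteness assumption, is \Cref{cor:celldecomp} invoked to split the realized slice as the stated wedge. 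Incidentally, your ``middle step'' of re-deriving the slice formula from \Cref{prop:lttechnicalassumption} via \Cref{lem:shifts} is also off: \Cref{lem:shifts} relates $s_q(\Sigma^{a,b}E)$ to $s_{q-b}(E)$ and cannot reduce $s_q(M)$ to $s_0(M)$ for the same $M$. The paper simply cites the Levine--Tripathi computation $s_q(E^{\text{mot}}) \simeq \Sigma^{2q,q}\M A \otimes E^{\text{top}}_{2q}$ rather than reproving it.
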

\begin{proof}
 We have already seen that $E^{\text{mot}}$ is cellular, and that $f_qE^{\text{mot}} \in \Sigma^q_T \SH(S)^{\veff}_{\cell}$, see \Cref{thm:effvsveff}, so that (1) and (2) of \Cref{thm:effvshhr} are satisfied. 

   By using either Theorem 2.3, Corollary 2.4, or Corollary 2.5 as well as Remark 2.6 of \cite{levine_tripathi}, we can compute that 
  \[
s_q(E^{\text{mot}}) \simeq \Sigma^{2q,q}\M A \otimes E_{2q}^{\text{top}} \simeq \bigvee_{I} \Sigma^{2q,q}\M A
  \]
  where the wedge is indexed by monomials of degree $2q$ in $E_{2q}^{\text{top}}$ (compare Section 4 of \cite{levine_tripathi}). The $q$-th slice is just $\Sigma^{2q,q}\M E_{2q}^{\text{top}}$, and so Condition (3) of \Cref{thm:effvshhr} is also satisfied by \Cref{prop:HMorthogonal,prop:bettiMA}. Finally, if there are only finitely many monomials of degree $2q$, then the indexing set $I$ is finite, so that \Cref{cor:celldecomp} applies and gives the decomposition of the slices. 
  \end{proof}
  \begin{exmp}
    Under our assumptions on $k$, this theorem applies to $E = \KGL$, since there is an equivalence $\KGL \simeq a_1^{-1}\MGL/(a_2,a_3,\ldots)$ \cite[Theorem 5.2]{spitzweck_slices}. By \Cref{exmp:realktheory}, the realization of $\KGL$ is Atiyah's real $K$-theory spectrum. We recover the computation of the slices of $K\R$ mentioned in the introduction, namely that 
    \[
P^{q}_q K\mathbb{R} \simeq \begin{cases}
  \Sigma^{q,q/2\sigma}H\underline{\Z} & \text{ $q$ is even} \\
  0 & \text{ otherwise.}
\end{cases}
    \]
  \end{exmp}
  \begin{cor}
    Any localized quotient $E^{\text{equiv}}$ of $M\R$ is even, i.e, satisfies $\underline{\pi}_{2k-1,k}E^{\text{equiv}} = 0$ for all $k \in \Z$. 
  \end{cor}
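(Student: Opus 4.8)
The plan is to deduce this directly from the comparison of slice towers proved above. First I would observe that every localized quotient $E^{\text{equiv}}$ of $M\R$ arises as a Betti realization: fixing a field $k \subseteq \R$ and letting $E^{\text{mot}} \in \SH(k)$ be the localized quotient of $\MGL$ built from the same data, the corollary identifying $\Re$ of a localized quotient of $\MGL$ with the corresponding localized quotient of $M\R$ gives $\Re(E^{\text{mot}}) \simeq E^{\text{equiv}}$. This uses that the classifying maps from the Lazard ring to $\MGL_{*,*}$ and to $\pi^{C_2}_{2*,*}M\R$ are compatible under realization, together with the fact that $\Re$ commutes with the colimits and cofibers defining the quotients and localizations.

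Next I would invoke \Cref{thm:comparasionthmchromatic}, which says precisely that $E^{\text{equiv}}$ satisfies the hypotheses of \Cref{thm:effvshhr}; among its conclusions is that the odd slices $P^{2q-1}_{2q-1}(E^{\text{equiv}})$ are contractible for every $q \in \Z$. Equivalently, $E^{\text{equiv}}$ is even in the sense of Hill--Meier.

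Finally I would translate evenness into the stated vanishing. By \Cref{prop:c2slices} there is an equivalence $P^{2q-1}_{2q-1}(E^{\text{equiv}}) \simeq \Sigma^{2q-1,q\sigma}H\underline{\pi}_{2q-1,q}(E^{\text{equiv}})$; since smashing with the invertible sphere $S^{2q-1,q\sigma}$ is an equivalence and the Eilenberg--Maclane construction on Mackey functors is faithful, contractibility of the left-hand side forces $\underline{\pi}_{2q-1,q}(E^{\text{equiv}}) = 0$. Renaming $q$ as $k$ yields $\underline{\pi}_{2k-1,k}E^{\text{equiv}} = 0$ for all $k \in \Z$. There is no genuine obstacle here: all of the analytic content sits in \Cref{thm:veffcellc2compthm} and \Cref{thm:comparasionthmchromatic}, and the only point requiring a moment's care is the identification of $E^{\text{equiv}}$ with a realization, which is exactly the earlier corollary on localized quotients.
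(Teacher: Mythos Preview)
The proposal is correct and takes essentially the same approach as the paper: the corollary is stated without proof because it follows immediately from \Cref{thm:comparasionthmchromatic} (via \Cref{thm:effvshhr} and \Cref{thm:veffcellc2compthm}(2)), which already records that $\Re(E)$ is even and hence that $\underline{\pi}_{2q-1,q}\Re(E)=0$. Your write-up simply unpacks those steps explicitly, including the identification $E^{\text{equiv}}\simeq\Re(E^{\text{mot}})$ and the use of \Cref{prop:c2slices} to pass from vanishing odd slices to vanishing homotopy Mackey functors.
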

  \begin{rem}
    In \cite[Corollary 4.6]{greenlees_meier} Greenlees and Meier prove that equivariant spectra of the form $BP\R/I$ are strongly even. The above results give an independent proof that they are even, but the logic is different; they compute the relevant homotopy groups to show evenness, while we use knowledge of the slices. 
  \end{rem}
 Another example comes from motivic Landweber exact spectra. 
 \begin{thm}
   Let $E$ be a motivic Landweber exact spectrum, with $E^{\text{top}}$ the corresponding topological Landweber exact spectrum. Then, its realization $\Re(E)$ satisfies the conditions of \Cref{thm:effvshhr}. The even slices of $\Re(E)$ are given by 
   \[
P_{2q}^{2q}(\Re(E)) \simeq \Sigma^q_T H\underline{\pi}_{2q}E^{\text{top}}. 
   \]
   and the odd slices are contractible. 
 \end{thm}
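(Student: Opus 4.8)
The plan is to verify the three hypotheses of \Cref{thm:effvshhr} for $E$ and then to read the slice formula off its conclusion. Condition (1), that $E$ is cellular, holds for every motivic Landweber exact spectrum by \cite[Proposition 8.4]{nso_landweber}. Condition (2), that $f_qE \in \Sigma^q_T\SH(S)^{\veff}_{\cell}$ for all $q \in \Z$, is precisely what was shown in \Cref{lem:prople}: there one has $f_qE \simeq \widetilde f_q^{\Cell}E$, and $\widetilde f_q^{\Cell}E$ lies in $\Sigma^q_T\SH(S)^{\veff}_{\cell}$ by the very definition of the very effective cellular slice filtration.

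The substantive step is Condition (3). First I would recall the computation of the effective slices of a Landweber exact motivic spectrum, going back to Spitzweck (see \cite{spitzweck_slices}; compare also the colimit argument of \Cref{lem:slicele}): if $E$ is associated to the Landweber exact $L[1/c]$-module $M_\ast$, so that $M_\ast \cong \pi_\ast E^{\text{top}}$, then $s_qE \simeq \Sigma^{2q,q}\M(\pi_{2q}E^{\text{top}})$, a single motivic Eilenberg--MacLane term. Applying Betti realization and \Cref{prop:bettiMA} then gives $\Re(s_qE) \simeq \Sigma^{2q,q\sigma}H\underline{\pi_{2q}E^{\text{top}}}$, and since $\underline{\pi_{2q}E^{\text{top}}}$ is a constant Mackey functor, \Cref{prop:HMorthogonal} shows $\Re(s_qE) \in (\Sigma^{2q+1}\SH(C_2)^{\hhr})^{\perp}$, which is Condition (3).

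With all three conditions verified, \Cref{thm:effvshhr} applies: the realization functor is compatible with the slice filtration at $E$, the $C_2$-spectrum $\Re(E)$ is even---so by \Cref{prop:c2slices} its odd slices are contractible---and the regular and Hill--Hopkins--Ravenel slice towers of $\Re(E)$ agree. Compatibility means in particular that the natural map $\beta_q \colon \Re(s_qE) \to \overline{P^{2q}_{2q}}\Re(E)$ is an equivalence for the target filtration $\{\Sigma^{2q}\SH(C_2)^{\hhr}\}_{q}$; since the odd slices of $\Re(E)$ vanish one has $\overline{P^{2q}_{2q}}\Re(E) \simeq P^{2q}_{2q}\Re(E)$, and combining with the slice computation of the previous paragraph yields $P^{2q}_{2q}\Re(E) \simeq \Re(s_qE) \simeq \Sigma^{2q,q\sigma}H\underline{\pi_{2q}E^{\text{top}}} = \Sigma^q_T H\underline{\pi_{2q}E^{\text{top}}}$, as claimed.

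The only genuinely nontrivial input is the identification $s_qE \simeq \Sigma^{2q,q}\M\pi_{2q}E^{\text{top}}$ of the motivic effective slices; once it is available, the rest is a matter of feeding hypotheses into the machinery assembled in \Cref{sec:veff} and \Cref{thm:effvshhr}, so I do not anticipate a serious obstacle. The one point demanding a little care is that the realization of $\M\pi_{2q}E^{\text{top}}$ is the \emph{constant} Mackey functor $H\underline{\pi_{2q}E^{\text{top}}}$ (rather than some other Mackey refinement), which is exactly the content of \Cref{prop:bettiMA}.
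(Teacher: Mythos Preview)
Your proposal is correct and follows essentially the same route as the paper: verify the three hypotheses of \Cref{thm:effvshhr} using cellularity of Landweber exact spectra, \Cref{lem:prople} for Condition (2), and Spitzweck's identification $s_qE \simeq \Sigma^{2q,q}\M\pi_{2q}E^{\text{top}}$ together with \Cref{prop:bettiMA} and \Cref{prop:HMorthogonal} for Condition (3). Your citation of \Cref{lem:prople} for Condition (2) is in fact the right one; the paper's text points to \Cref{thm:effvsveff}, which concerns localized quotients of $\MGL$ rather than Landweber exact spectra.
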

 \begin{proof}
   This is similar to the previous theorem - motivic Landweber exact spectra are, by definition, cellular, and we know that $f_qE^{\text{mot}} \in \Sigma^q_T\SH(S)^{\veff}_{\cell}$ by \Cref{thm:effvsveff}. By \cite{spitzweck_landweber} we have $s_q(E) \simeq \Sigma^{2q,q}\M E_{2i}^{\text{top}}$. It follows that the conditions of \Cref{thm:effvshhr} are satisfied, and the result follows. 
 \end{proof}
\begin{rem}
  In general, the realization of a cellular motivic spectrum need not have any relation with the slices of its Betti realization. For example, since motivic slices commute with suspension, but the equivariant ones do not, one can check that (for example) $\Sigma^{k,0}\M \F_2$ when $k \ne 0$ is a counter-example. For a more interesting example, the $q$-th slice of the motivic sphere spectrum always has a summand $\Sigma^{q,q}\M \Z/2$ (under the equivalence of \Cref{eq:slicessphere}, this summand is generated by $\alpha_1^q$, see \cite[Corollary 2.13]{rso_slices}). By \cite[Corollary 2.20 and Corollary 2.21]{hill_primer} this has the wrong suspension to be a $m$-slice (for any $m \in \Z$) unless $q = 0$. A similar statement holds for the slice of $\KQ$ using the computation of its slices in \cite{ro_slices_kq}. 

  The examples of $\unit$ and $\KQ$ given above should not be surprising - after all, here the effective and very effective slices do not agree (the latter is proved by Bachmann \cite{1610.01346}, and the former follows since $\widetilde{s}_0(\unit) \simeq \widetilde{s}_0(\KQ)$). On the other hand, we do not know the relation between the very effective slices of $\KQ$ and the equivariant slices of $\Re(\KQ) \simeq KO_{C_2}$, $C_2$-equivariant real $K$-theory. The very effective slice filtration of $\KQ$ is given by 
  \[
\widetilde{s}_q(\KQ) \simeq \begin{cases}
  \Sigma^{2q,q}\widetilde{s}_0(\unit) & q \equiv 0 \mod (4)\\
  \Sigma^{2q,q} \M\Z/2 & q \equiv 1 \mod (4)\\ 
  \Sigma^{2q,q} \M\Z & q \equiv 2 \mod (4) \\
   0 & q \equiv 3 \mod(4) \\ 
\end{cases}
  \]
  Here the suspensions are such that it is possible that the realization of the $q$-th slice is the $2q$-th equivariant slice of the realization. We save any further investigation for future work. 
\end{rem}
\section{The slice spectral sequence}
Suppose we are in the situation of \Cref{thm:effvshhr}. We have slice spectral sequences in motivic and equivariant homotopy\footnote{All the spectral sequences we consider in this section will converge, for example by \cite[Theorem 8.12]{hoyois_hhm} and \cite[Theorem 4.42]{hhr}.}
\[
E_1^{p,q,w} \cong \pi_{p,w}s_q(E) \implies \pi_{p,w}(E)
\]
and
\[
E_1^{p,q,w} \cong \pi_{p,w}P_{2q}^{2q}(\Re(R)) \implies \pi_{p,w}(\Re(E)),
\]
and moreover Betti realization induces an isomorphism between the two spectral sequences.\footnote{Note that we are implicitly using the even equivariant slice filtration, and ignoring the odd slices, which are all contractible.} It follows that we can use naturality to determine differentials in the motivic spectral sequence from known differentials in equivariant homotopy theory.  
\subsection{The motivic and equivariant slice spectral sequences}
We begin by reminding the reader of the equivariant and motivic cohomology of a point with $\F_2$-coefficients. We start with the motivic case, which is easier to describe. Let $k$ be a field, then there is a canonical element $\tau \in H^{0,1}(\Spec(k),\F_2)$, such that 
\[
\pi_{*,*}(\M \F_2) \cong H^{-*,-*}(\Spec(k),\F_2) \cong k_*^M(k)[\tau]
\]
where $k_*^M(k)$ denotes the mod 2 Milnor $K$-theory of $k$, and $k_n^M(k)$ has cohomological bidegree $(n,n)$ - this is a consequence of \cite{voe_z2}, see \cite[Remark 4.4]{di_adss} for a convenient reference. 

Now assume that $k \subseteq \R$ is a real closed field (that is, $k$ is a subfield of $\mathbb{R}$ that is not algebraically closed, but $k(\sqrt{-1})$ is). In this case, we have an isomorphism $k_*^M(k) \cong \F_2[\rho]$ \cite[Corollary X.6.9]{lam_quadratic_forms}, where $\rho$ represents $[-1] \in k_1^M(k) \cong k^\times/(k^\times)^2$. Together, we conclude that $H^{*,*}(\Spec(k),\F_2) \cong \F_2[\tau,\rho]$, where $\tau$ has homological bidegree $(0,-1)$ and $\rho$ has homological bidegree $(-1,-1)$. We denote this ring by $\mathbb{M}_2$. 

The $C_2$-equivariant Bredon cohomology of a point is given in \cite[Proposition 6.2]{hk_real}. It is useful to first describe some elements in it. There is an element $a_\sigma \in \pi^{C_2}_{-1,-1}S^0$ corresponding to the inclusion $S^{0,0} \to S^{1,\sigma}$. Following standard conventions, we denote the image of $a_{\sigma}$ in $\pi_{\ast,\ast}H\underline{F}_2 \cong H^{-\ast,-\ast}(\mathrm{pt},H\underline{\F}_2)$ as $a_\sigma$ as well. Under realization, the class $\rho \in H^{*,*}(\Spec(k),\F_2)$ maps to the class $a_{\sigma}$. 

The other distinguished class is denoted $u_{\sigma}$, and is the element corresponding to the generator of $H_1^{C_2}(S^{1,\sigma},\underline{\F}_2) \cong \pi^{C_2}_{0,-1}(H\underline{\F}_2)$. Under realization the class $\tau$ maps to the class $u_{\sigma}$. 

With this in mind, the Bredon equivariant cohomology of a point can be described by 
\[
\pi_{*,*}H\underline{\F}_2 \cong H^{-*,-*}(\mathrm{pt},H\underline{\F}_2) \cong \F_2[u_{\sigma},a_{\sigma}] \oplus \F_2 \left\{ \frac{\theta}{u_{\sigma}^i a_{\sigma}^j}\right\}, \quad i,j \ge 0. 
\]
Of course, some care must be taken when interpreting this because $u_{\sigma}$ and $a_{\sigma}$ are not actually invertible. Here the homological bidegrees are given by $|u_{\sigma}|= (-1,0), |a_{\sigma}| = (-1,-1)$ and $|\theta| = (2,0)$. The element $\theta$ is both $u_{\sigma}$ and $a_{\sigma}$-torsion, and is infinitely divisible by the same elements, and any two elements in $\F_2 \left\{ \frac{\theta}{u_{\sigma}^i a_{\sigma}^j}\right\}$ multiply trivially. 
\subsection{Mod $2$ $\BP$ and $\BP\langle n \rangle$}\label{sec:mod2calcs}
Let $k \subseteq \R$ be a real closed field, so that the class $\rho$ is non-zero. We define mod $2$ version of $\BP$ and $\BP\langle n \rangle$ by
\[
\BP/2 = \MGL_{(2)}/(\{2,a_i \mid i \ne p^j-1 \})
\]
and 
\[
\BP \langle n \rangle/2 = \MGL_{(2)}/(\{2,a_i \mid i \ne 2^j -1, 0 \le k \le n\}).
\]
If we write $v_i = a_{2^i-1}$, then the corresponding topological spectral spectra have coefficient rings
\[
(BP/2)_* \cong BP_*/2 \cong \F_2[v_1,v_2,\ldots] \quad \text{ and } \quad (BP\langle n \rangle/2)_* \cong BP\langle n \rangle_*/2 \cong \F_2[v_1,\ldots,v_n]. 
\]
Then, the calculations of Levine and Tripathi \cite[Corollary 4.6]{levine_tripathi} give
\[
s_q(\BP/2) \simeq \Sigma^{2q,q}M\F_2 \otimes BP_{2q}/2 \quad \text{ and } \quad s_q\BP\langle n \rangle \simeq \Sigma^{2q,q}M\F_2 \otimes {BP\langle n \rangle_{2q}}/2.
\]

The associated slice spectral for $\BP/2$ then has the form
\[
E_1^{p,q,w} \cong H^{2q-p,q-w}(\Spec(k),BP_{2q}/2) \implies \pi_{q,w}\BP/2.
\]
When $q=0$, we have $s_0(BP/2) \simeq \M\F_2$, and so there are classes $\tau \in \pi_{0,-1}s_0(\BP/2)$ and $\rho \in \pi_{-1,-1}s_0(\BP/2)$. In particular, in the slice spectral sequence for $\BP/2$ we have $\tau^m \in E_1^{0,0,-m}$ and $\rho^m \in E_1^{m,m,-m}$. Moreover, the classes $v_k$ give rise to elements in $\pi_{2k,k}s_k(\BP/2) \cong \pi_0\M\F_2$. 
\begin{prop}
  The $E_1$-term for the slice spectral sequence for $\BP/2$ is given by
  \[
E_1^{p,q,w} \cong \F_2[\rho,\tau,v_1,v_2,\ldots]
  \]
  with tri-degrees $|\tau| = (0,0,-1)$, $|\rho| = (-1,0,-1)$, and $|v_i| = (2^{i+1}-2,2^i-1,2^i-1)$. The differentials $d_i(\tau^{2^{k}})$ are zero for $i < 2^{k}-1$, and 
  \[
d_{2^{k}-1}(\tau^{2^{k}}) = v_k \rho^{2^{k+1}-1}.
  \]
\end{prop}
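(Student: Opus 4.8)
The $E_1$-term is read off directly from the Levine--Tripathi computation: $s_q(\BP/2)\simeq\Sigma^{2q,q}\M\F_2\otimes BP_{2q}/2$, and since $k$ is real closed $\pi_{*,*}\M\F_2\cong\F_2[\tau,\rho]$ with $|\tau|=(0,-1)$ and $|\rho|=(-1,-1)$; the generator $v_i=a_{2^i-1}$ lies in $BP_{2(2^i-1)}/2$, hence in slice degree $2^i-1$ and $\pi$-bidegree $(2^{i+1}-2,2^i-1)$. Assembling the slices identifies $E_1^{*,*,*}$ with $\F_2[\rho,\tau,v_1,v_2,\dots]$ carrying the stated tridegrees. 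Here $\rho$ is a permanent cycle, being the image of $\rho=[-1]\in\pi_{-1,-1}\unit$, and each $v_i$ is a permanent cycle, being the image of a Lazard generator under $L\to\pi_{*,*}\BP/2$; so the only differentials left to determine are those on the classes $\tau^m\in E_1^{0,0,-m}$ and their $\rho,v_i$-multiples. Note that $\BP/2$ is a $\BP$-module but not a ring, so the slice spectral sequence of $\BP/2$ is a module spectral sequence over the multiplicative slice spectral sequence of $\BP$ but is itself not multiplicative; in particular a power $\tau^{2^k}$ may support a differential even though, for degree reasons ($H^{a,b}(\Spec k,\F_2)=0$ for $a>b$), $\tau$ itself is a permanent cycle.

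For the differentials on the $\tau^m$ the plan is to transport the known $C_2$-equivariant computation along Betti realization. By \Cref{thm:comparasionthmchromatic}, $\Re$ is compatible with the slice filtration at $\BP/2$, so it carries the motivic slice tower of $\BP/2$ to the even Hill--Hopkins--Ravenel slice tower of $BP\R/2=\Re(\BP/2)$, and hence induces a map from the motivic slice spectral sequence of $\BP/2$ to the equivariant slice spectral sequence of $BP\R/2$; under the attendant doubling of slice degrees a motivic $d_r$ corresponds to an equivariant $d_{2r}$. On $E_1$-terms this map is $\pi_{*,*}\M\F_2\otimes BP_*/2\to\pi_{*,*}H\underline{\F}_2\otimes BP_*/2$ induced by the injection recalled above, sending $\tau\mapsto u_\sigma$, $\rho\mapsto a_\sigma$ and $v_i\mapsto v_i$, i.e.\ it is the split inclusion of the positive cone $\F_2[u_\sigma,a_\sigma,v_1,\dots]$ of the equivariant $E_1$-page. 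Since this positive cone is closed under the equivariant slice differentials and is neither a source nor a target for differentials out of the negative cone $\F_2\{\theta/(u_\sigma^i a_\sigma^j)\}\otimes BP_*/2$, the map of spectral sequences remains injective on every page, and the motivic slice spectral sequence of $\BP/2$ is identified with the positive cone sub-spectral sequence of the equivariant one; in particular a motivic $d_r$ on $\tau^{2^k}\rho^a v_I$ is determined by the equivariant $d_{2r}$ on its image.

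It then remains to quote the equivariant slice differentials for $BP\R/2$. As $BP\R/2$ is built from $M\R$ by $2$-localization and quotients, its slice spectral sequence is a module over that of $M\R$, and the Hill--Hopkins--Ravenel slice differentials theorem \cite{hhr} gives that $u_\sigma^{2^k}=u_{2\sigma}^{2^{k-1}}$ supports no differential before page $2^{k+1}-2$ and
\[
d_{2^{k+1}-2}\bigl(u_\sigma^{2^k}\bigr)=v_k\,a_\sigma^{2^{k+1}-1}.
\]
Applying the injection of the previous paragraph and matching pages via the doubling of slice degrees yields $d_i(\tau^{2^k})=0$ for $i<2^k-1$ and
\[
d_{2^k-1}\bigl(\tau^{2^k}\bigr)=v_k\,\rho^{2^{k+1}-1},
\]
as asserted; one checks directly that the tridegree of $v_k\rho^{2^{k+1}-1}$ is $(-1,2^k-1,-2^k)$, which is the target of $d_{2^k-1}$ on $\tau^{2^k}$. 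The step demanding the most care is the identification of the motivic slice spectral sequence with the positive cone of the equivariant one — i.e.\ injectivity on all pages — which rests on the interaction of the equivariant slice differentials with the positive/negative cone decomposition of $\pi_{*,*}H\underline{\F}_2$; the bookkeeping of the reindexing of slice degrees under $\Re$ must also be tracked carefully throughout.
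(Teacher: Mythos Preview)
Your proof follows the paper's strategy: read off the $E_1$-term from Levine--Tripathi and transport the Hill--Hopkins--Ravenel differentials along the comparison map induced by Betti realization. The paper, however, works with the \emph{even} equivariant slice tower throughout, so that no reindexing is needed and the map of spectral sequences preserves page numbers verbatim; in that convention the equivariant differential already reads $d_{2^k-1}(u_\sigma^{2^k})=\bar v_k\,a_\sigma^{2^{k+1}-1}$. Your version compares instead to the full equivariant slice spectral sequence with a doubling of slice degree, which is perfectly legitimate, but in HHR's $E_2$-indexing the relevant differential is $d_{2^{k+1}-1}$ (not $d_{2^{k+1}-2}$) and a motivic $d_r$ corresponds to an equivariant $d_{2r+1}$ (not $d_{2r}$). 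Your two offsets cancel, so the conclusion is correct, but the intermediate statements should be brought into line with the cited source, or else your alternative $E_1$-indexing of the equivariant spectral sequence should be stated explicitly.

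The more substantive point is your assertion that the positive cone ``is not a target for differentials out of the negative cone.'' This does not follow from degree considerations alone: a class $\theta/(u_\sigma^i a_\sigma^j)$ tensored into slice $2q$ sits in $p$-degree $\ge 2q+2$, and for $r\ge 1$ its $d_r$-target lies in the range $p-1\le 2(q+r)$ of the next slice, which overlaps the positive cone. The paper does not make this claim; it argues inductively instead, computing both $E_{r+1}$-pages from the known $d_r$ and checking injectivity directly at each stage. You rightly flag this as ``the step demanding the most care''; to close the gap you should either carry out that page-by-page verification or supply an independent argument (for instance by invoking the known complete structure of the $BP\R/2$ slice spectral sequence, or an $a_\sigma$-linearity argument) that the positive and negative cones really do decompose the equivariant spectral sequence.
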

\begin{proof}
  That the $E_1$-term has the claimed form is clear from the discussion above. To determine the differentials, we use naturality to compare with the differentials for the $C_2$-equivariant slice spectral sequence for $BP\R/2$. By \Cref{thm:comparasionthmchromatic} we can compute the slices for $BP\R/2$ (or this can also be deduced using the method of twisted monoid rings in \cite{hhr}). In particular, we deduce that the odd slices for $BP\R/2$ are trivial, and the even slices are given by wedges of $H\underline{\mathbb{F}}_2$, where the wedges are indexed by monomials in $BP_*/2$. We chose to actually work with the even slice filtration, as this Betti realization induces an isomorphism on slices.  

   It follows from the discussion in the previous section that in positive degrees the $E_1$-term of the slice spectral sequence has the same form as the motivic slice spectral sequence, namely $\F_2[a_{\sigma},u_{\sigma},\overline{v_1},\overline{v_2},\ldots]$, where $\overline{v}_i \in \pi_{2(2^i-1),2^i-1}BP\R$ is the equivariant lift of $v_i \in \pi_{2(2^i-1)}BP$. In particular, there is an injection on the $E_1$-page of the corresponding slice spectral sequences. 

  The differentials in the equivariant slice spectral sequence for $BP\R/2$ can be determined by the differentials in the slice spectral sequence for $BP\R$ itself. The differentials in the latter can be determined by the work of Hu and Kriz \cite{hk_real} or as a consequence of Hill, Hopkins, Ravenel \cite[Theorem 9.9]{hhr}, and are fully described in \cite[Proposition 3.4]{hurewicz}. To wit, the $E_1$-term is given in positive degrees by $\Z[a_{\sigma},u_{2\sigma},\overline{v}_i]$. Here $u_{2\sigma} \in \pi_{-2,-2}H\underline{\Z}$ is a lift of the class $u_{\sigma}^2 \in \pi_{-2,-2}H\underline{\F}_2$ (the class $u_{\sigma}$ does not exist in $\pi_{*,*}H\underline{\Z}$.). The differentials $d_i(\tau^{2^{k}})$ are zero for $i < 2^{k}-1$, and 
  \[
d_{2^{k}-1}(u_{2\sigma}^{2^{k-1}}) = \overline{v}_k a_{\sigma}^{2^{k+1}-1}.
\]
Note that we use a different grading convention to Hill, Hopkins, and Ravenel - we choose to start our spectral sequence on the $E_1$-page instead of the $E_2$-page. Moreover, we work with the even slice filtration. This has the effect that the first differential in our spectral sequence is a $d_1$, which corresponds to a $d_3$-differential for Hill, Hopkins, and Ravenel. Similarly, their $d_{2^k}$-differential corresponds to a $d_{2^k-1}$-differential in our grading.

By considering the map of slice spectral sequences induced by the quotient $BP\R \to BP\R/2$, we deduce that the equivariant slice spectral sequence for $BP\R/2$ has differentials 
\[
d_{2^k-1}(u_{\sigma}^{2^k}) = \overline{v}_ka_{\sigma}^{2^{k+1}-1}. 
\]
This can also be determined using the methods of Hu and Kriz, cf \cite[Section 3]{hk_real}. 

From the calculations of the $E_1$-pages, we see that the map of spectral sequences from $\BP/2$ to $BP\R/2$ is an injection on the $E_1$-page. It follows that the equivariant differential $d_1(\tau) = \rho$ implies that there is a motivic differential $d_1(u_{\sigma}) = a_{\sigma}$, and that the spectral sequence is an injection on the $E_2$-page. Inductively, we deduce that the morphism of spectral sequences is an injection on the $E_k$-page, and that the motivic differentials are as claimed. 
\end{proof}
This $E_1$-term is remarkably similar to the $\rho$-Bockstein spectral sequence for computing $\Ext_{E}(\mathbb{M}_2,\mathbb{M}_2)$, as studied by Hill in \cite{hill_motivic}, where $E$ is the sub-algebra of the mod 2 motivic Steenrod algebra generated by the Milnor primitives. Using a similar analysis to Hill (see \cite[Corollary 3.3]{hill_motivic}), and the previous proposition, one arrives at the following result, which should be compared to Yagita's computation \cite[Theorem 6.5]{yagita}.
\begin{thm}
  Over a real closed field $k \subseteq \R$, the $E_\infty$-term of the slice spectral sequence $\BP/2$ is given additively by 
  \[
\F_2[\rho,\tau,v_i(j) \mid i > 0, j \ge 0]
  \]
subject to the relations
  \[
  \begin{split}
    \tau^2 &= 0 \\
    \rho^{2^{i+1}-1}v_i(j) &= 0  \\
    v_i(j) \cdot v_k(\ell) &= v_i(j+2^{k-i}\ell)\cdot v_k(0) \text{ when }k \ge i.
  \end{split}
\]
  Here $v_i(j)$ is represented on the $E_1$-page of the slice spectral sequence by $\tau^{2^{i+1}j}v_i$ (so in particular, $v_i(0)$ is represented by $v_i$). 
\end{thm}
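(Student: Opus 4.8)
The plan is to compute the $E_\infty$-page of the slice spectral sequence of $\BP/2$ from the $E_1$-page and differentials supplied by the previous proposition, imitating Hill's analysis of the $\rho$-Bockstein spectral sequence in \cite[Section 3]{hill_motivic}. The key point is that, after renaming generators, the $E_1$-term $\F_2[\rho,\tau,v_1,v_2,\dots]$ together with the family of differentials $d_{2^k-1}(\tau^{2^k})=v_k\rho^{2^{k+1}-1}$ for $k\ge 1$ is formally identical to the $\rho$-Bockstein spectral sequence used in \cite{hill_motivic} to compute $\Ext_E(\mathbb{M}_2,\mathbb{M}_2)$, where $E$ is the subalgebra of the mod $2$ motivic Steenrod algebra generated by the Milnor primitives $Q_1,Q_2,\dots$. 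In particular $\rho$ and the $v_i$ are permanent cycles and the differentials are $\F_2[\rho,v_1,v_2,\dots]$-linear, so they are determined by the $\tau$-towers alone, and the $E_\infty$-page is computed exactly as in \cite[Corollary 3.3]{hill_motivic}.

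Concretely I would run the differentials in increasing order of $k$. The differential $d_1$ sends $\tau^{2}$ to $v_1\rho^3$ together with its $\F_2[\rho,v_*]$-linear translates; taking homology shows that on the next page $\tau^2=0$ and $\rho^3 v_1=0$, that $\tau^4 v_1$ survives --- this is the generator named $v_1(1)$ --- and more generally that each $v_1(j)$, represented by $\tau^{4j}v_1$, survives with $\rho^3 v_1(j)=0$, while $\tau^4$ becomes a new free element of the $\tau$-tower. Inductively, after $d_1,\dots,d_{2^{k-1}-1}$ the page is spanned over $\F_2$ by $\rho$, $\tau$, the families $v_i(j)$ for $1\le i\le k-1$ and $j\ge0$, the classes $v_i$ for $i\ge k$, and the powers of $\tau^{2^k}$, subject to $\tau^2=0$ and $\rho^{2^{i+1}-1}v_i(j)=0$ for $i<k$; the next nonzero differential $d_{2^k-1}$ then sends $\tau^{2^k}$ to $v_k\rho^{2^{k+1}-1}$, forcing $\rho^{2^{k+1}-1}v_k(j)=0$ for all $j\ge0$, killing the odd powers of $\tau^{2^k}$, and introducing $\tau^{2^{k+1}}$ and the new generators $v_k(j):=\tau^{2^{k+1}j}v_k$. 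Since in each tri-degree only finitely many of these differentials are nonzero, the spectral sequence converges and $E_\infty$ is described additively by $\F_2[\rho,\tau,v_i(j)]$ with the relations $\tau^2=0$ and $\rho^{2^{i+1}-1}v_i(j)=0$.

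It remains to pin down the relation $v_i(j)v_k(\ell)=v_i(j+2^{k-i}\ell)v_k(0)$ for $k\ge i$. On $E_1$-representatives this is simply the identity $\tau^{2^{i+1}j}v_i\cdot\tau^{2^{k+1}\ell}v_k=\tau^{2^{i+1}(j+2^{k-i}\ell)}v_i\cdot v_k$, which holds because $2^{k+1}\ell=2^{i+1}(2^{k-i}\ell)$, and it descends to $E_\infty$. The main obstacle --- the one genuinely non-formal step --- is to check that this is the \emph{only} further relation, i.e.\ that once $v$-monomials are rewritten in the normal form with all of the $j$-shifts accumulated on the lowest-index factor, the resulting monomials are $\F_2$-linearly independent in $E_\infty$ and there are no hidden relations from the slice filtration. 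I would settle this as Hill does in \cite[Corollary 3.3]{hill_motivic}, by a Poincar\'e series count, using the comparison from the previous proposition: the map of slice spectral sequences $\BP/2\to BP\R/2$ is injective on every page, hence on $E_\infty$, so the motivic $E_\infty$ embeds into the equivariant one (computed by Hu--Kriz and in \cite{hhr}), and a degree-by-degree rank comparison identifies the image as exactly the stated subquotient. Thus, beyond the previous proposition, the content is bookkeeping: setting up the monomial normal form and matching Poincar\'e series.
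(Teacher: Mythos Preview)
Your proposal is correct and takes essentially the same approach as the paper: the paper's entire argument is the sentence ``Using a similar analysis to Hill (see \cite[Corollary~3.3]{hill_motivic}), and the previous proposition, one arrives at the following result,'' so you have in fact supplied considerably more detail than the paper does. Your invocation of Hill's $\rho$-Bockstein computation, the inductive running of the $d_{2^k-1}$-differentials, and the identification of the multiplicative relation on $E_1$-representatives are exactly the intended line; the Poincar\'e-series/injectivity check you sketch at the end is a reasonable way to rule out further relations, though the paper simply defers this to Hill as well.
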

A similar analysis gives the following for the truncated Brown--Peterson spectra.  
\begin{thm}
  Over a real closed field $k \subseteq \R$, the $E_1$-term of the slice spectral sequence for $\BP\langle n \rangle /2$ is given by 
  \[
E_1^{p,q,w} \cong \F_2[\rho,\tau,v_1,\ldots,v_n]. 
  \]
The differentials $d_i(\tau^{2^{k}})$ are zero for $i < 2^{k}-1$, and 
  \[
d_{2^{k}-1}(\tau^{2^{k}}) = v_k \rho^{2^{k+1}-1}
  \]
  for $1 \le k \le n$. We deduce that the $E_\infty$-term of the slice spectral sequence of $\BP\langle n \rangle /2$ is given additively by 
  \[
\F_2[\rho,\tau,t_{n+1}, v_i(j) \mid 0 < i \le n, j \ge 0]
  \]
subject to the relations
\[
\begin{split}
    \tau^2 &= 0 \\
    \rho^{2^{i+1}-1}v_i(j) &= 0  \\
      v_i(j) \cdot v_k(\ell) &= v_i(j+2^{k-i}\ell)\cdot v_k(0) \text{ when }k \ge i\\
      v_i(j) &= t_{n+1} v_i(j-2^{n-i}) \text{ when } j \ge 2^{n-i}. 
\end{split}
\]
  Here $v_i(j)$ is represented on the $E_1$-page of the slice spectral sequence by $\tau^{2^{i+1}j}v_i$, while $t_{n+1}$ is represented on the $E_1$-page by $\tau^{2^{n+1}}$. 
\end{thm}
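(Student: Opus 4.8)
The plan is to run the argument used above for $\BP/2$, tracking the effect of the truncation $v_{n+1}=v_{n+2}=\cdots=0$. First, since $\BP\langle n\rangle/2$ is a localized quotient of $\MGL$, \Cref{thm:comparasionthmchromatic} applies; in particular it satisfies the hypotheses of \Cref{thm:effvshhr}, and, as recalled above, the Levine--Tripathi computation gives $s_q(\BP\langle n\rangle/2)\simeq \Sigma^{2q,q}\M\F_2\otimes BP\langle n\rangle_{2q}/2$. Over a real closed field $k\subseteq\R$ one has $\pi_{*,*}\M\F_2\cong\F_2[\tau,\rho]$ and $BP\langle n\rangle_{*}/2\cong\F_2[v_1,\dots,v_n]$ with $v_i=a_{2^i-1}$, so assembling the homotopy of the slices identifies the $E_1$-page with $\F_2[\rho,\tau,v_1,\dots,v_n]$ in the stated tridegrees; this is exactly the $E_1$-page for $\BP/2$ with only finitely many polynomial generators $v_i$.

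Next I would pin down the differentials by comparison with the equivariant side. By \Cref{thm:effvshhr}, Betti realization gives a morphism from the motivic slice spectral sequence of $\BP\langle n\rangle/2$ to the (even) $C_2$-equivariant slice spectral sequence of its realization $BP\R\langle n\rangle/2$, the corresponding localized quotient of $M\R$, and over a real closed field the induced map on $E_1$-pages is injective in positive degrees, since $\pi_{*,*}\M\F_2\to\pi_{*,*}H\underline{\F}_2$ is injective and given on the relevant positive-cone classes by $\tau\mapsto u_\sigma$, $\rho\mapsto a_\sigma$ (\Cref{prop:bettiMA}). The equivariant slice spectral sequence of $BP\R\langle n\rangle/2$ is obtained from that of $\BP\R$ by the quotient $\BP\R\to BP\R\langle n\rangle$ followed by reduction mod $2$; by the Hill--Hopkins--Ravenel slice differentials for $\BP\R$ (recalled above; see also \cite[Proposition 3.4]{hurewicz}) it has $d_i(u_\sigma^{2^k})=0$ for $i<2^k-1$, $d_{2^k-1}(u_\sigma^{2^k})=\overline{v}_k a_\sigma^{2^{k+1}-1}$ for $1\le k\le n$, and --- the genuinely new point relative to $\BP/2$ --- the class $u_\sigma^{2^{n+1}}$ is a permanent cycle, because every differential it could support is a multiple of some $\overline{v}_m$ with $m>n$, and these vanish in $BP\R\langle n\rangle$. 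Pulling this back along $\Re$ by induction on the page number (the comparison remaining injective in positive degrees on each $E_r$, exactly as in the $\BP/2$ argument) yields $d_{2^k-1}(\tau^{2^k})=v_k\rho^{2^{k+1}-1}$ for $1\le k\le n$, vanishing of all lower differentials, and that $\tau^{2^{n+1}}$ is a permanent cycle with no further differentials.

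Finally, with the $E_1$-page and the full list of differentials in hand, computing $E_\infty$ is the purely algebraic calculation carried out by Hill for $\Ext_E(\mathbb{M}_2,\mathbb{M}_2)$ in \cite[Corollary 3.3]{hill_motivic}: one runs the ``$\rho$-Bockstein''-type spectral sequence on $\F_2[\rho,\tau,v_1,\dots,v_n]$ with the above differentials, reads off that $E_\infty$ is generated additively by $\rho$, $\tau$, the classes $v_i(j)$ detected by $\tau^{2^{i+1}j}v_i$, and $t_{n+1}$ detected by $\tau^{2^{n+1}}$ (noting that $t_{n+1}^2$ is detected by $\tau^{2^{n+2}}$, which survives since $v_{n+1}=0$, so $t_{n+1}$ is a genuine polynomial generator), and verifies the listed relations degree by degree. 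I expect the main obstacle to be precisely the assertion that $\tau^{2^{n+1}}$ is a permanent cycle: for $\BP/2$ this is automatic, since the differential pattern on the $\tau$-tower never terminates, whereas here one must extract from the complete Hill--Hopkins--Ravenel description of the slice spectral sequence of $\BP\R$ (\cite[Theorem 9.9]{hhr}) that killing $\overline{v}_{n+1},\overline{v}_{n+2},\dots$ halts the process and leaves $u_\sigma^{2^{n+1}}$, hence $\tau^{2^{n+1}}$, surviving; the ensuing bookkeeping of the $E_\infty$-relations is routine but a little intricate.
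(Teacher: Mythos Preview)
Your proposal is correct and takes essentially the same approach as the paper. The paper itself offers no proof beyond the single sentence ``A similar analysis gives the following for the truncated Brown--Peterson spectra,'' and you have correctly spelled out what that similar analysis is: identify the $E_1$-page from the Levine--Tripathi slice computation and the motivic cohomology of a real closed field, transport the Hill--Hopkins--Ravenel differentials for $BP\R$ through the quotient map and Betti realization (using the inductive injectivity of the comparison on each page, exactly as in the $\BP/2$ argument), observe that the truncation forces $\tau^{2^{n+1}}$ to survive, and then read off $E_\infty$ via Hill's algebraic bookkeeping.
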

There can be non-trivial extensions in these spectral sequences. For example, when $n=1$, the $E_\infty$-page of the slice spectral sequence for $\BP\langle 1 \rangle/2$ is given by $\F_2[\rho,\tau,t_2,v_1]/(\tau^2,\rho^3 v_1)$. In particular, in weight zero it is given by
\[
\F_2[t_2v_1^4]\{ 1,\rho v_1,(\rho^2 v_1^2,\tau v_1),\tau \rho v_1^2,\tau \rho^2 v_1^3,0,0,0\}.
\]
On the other hand, we have that $\BP \langle 1 \rangle/2 \simeq \kgl/2$, so that in weight 0 this computes the mod 2 algebraic $K$-theory of $k$. By Suslin \cite{suslin_ktheory}, over $\Spec(\R)$ we have
\[
K_n(\R,\Z/2) \cong \begin{cases}
  \Z/2 &\text{ when } n \equiv 0,1,3,4 \mod (8), \\
  \Z/4 & \text { when } n \equiv 2 \mod (8), \\
  0 & \text{ otherwise.}
\end{cases}
\]
If follows that there must be a non-trivial additive extension between $\rho^2v_1^2$ and $\tau v_1$, and their $t_2v_1^4$-multiples.

\bibliography{bib_motivic}\bibliographystyle{gtart}

\end{document}